\newtheorem{thm}{Theorem}[section]
\newtheorem{lem}{Lemma}[section]
\newtheorem{cor}[lem]{Corollary}
\newtheorem{prop}[thm]{Proposition}
\theoremstyle{definition}
\newtheorem{defn}[thm]{Definition}
\theoremstyle{remark}
\newtheorem{rem}[thm]{Remark}
\numberwithin{equation}{section}
\newcommand{\E}{\mathbf{E}\,}
\newcommand{\Tr}{\mathrm{Tr}\;\!}
\newcommand{\re}{\mathrm{Re}\;\!}
\newcommand{\im}{\mathrm{Im}\;\!}
\newenvironment{Proof of}{\removelastskip\par\medskip
\noindent{\em Proof of} \rm}{\penalty-20\null\hfill$\square$\par\medbreak}
\begin{document}
\date{April 15, 2011}%{December 20, 2010}
\title{\bf On the  Asymptotic Spectrum of Products of Independent Random Matrices.}

\author{{\bf F. G\"otze}\\{\small Faculty of Mathematics}
\\{\small University of Bielefeld}\\{\small Germany}
\and {\bf A. Tikhomirov}$^{1}$\\{\small Department of Mathematics
}\\{\small Komi Research Center of Ural Branch of RAS,}\\{\small Syktyvkar State University}
\\{\small  Russia}}

\maketitle
 \footnote{$^1$Partially supported by RF grant of the leading scientific schools
NSh-638.2008.1. Partially supported by  RFBR, grant  N 09-01-12180 and RFBR--DFG, grant N 09-01-91331.
Partially supported by CRC 701 ``Spectral Structures and Topological
Methods in Mathematics'', Bielefeld}

% ---------------------------------------------------------------

%\today

\begin{abstract}We consider products of independent random  matrices 
with independent entries. 
The limit distribution of the expected empirical distribution of eigenvalues of such products is computed. 
Let $X^{(\nu)}_{jk},{  }1\le j,r\le n$, $\nu=1,\ldots,m$ be mutually independent complex random variables
with $\E X^{(\nu)}_{jk}=0$ and $\E {|X^{(\nu)}_{jk}|}^2=1$. Let 
$\mathbf X^{(\nu)}$ denote  an $n\times n$  matrix with entries $[\mathbf X^{(\nu)}]_{jk}=\frac1{\sqrt{n}}X^{(\nu)}_{jk}$, 
for $1\le j,k\le n$.
  Denote by $\lambda_1,\ldots,\lambda_n$ the eigenvalues  of
the random  matrix
$\mathbf W:= \prod_{\nu=1}^m\mathbf X^{(\nu)}$
and define its  empirical spectral distribution  by
$$
\mathcal F_n(x,y)=\frac1n\sum_{k=1}^n\mathbb I\{\re{\lambda_k}\le x,\im{\lambda_k\le y}\},
$$
where $\mathbb I\{B\}$ denotes the indicator of an event $B$.
 We prove that
 the expected spectral distribution $F_n^{(m)}(x,y)=\E \mathcal F_n^{(m)}(x,y)$ converges
 to the  distribution function  $G(x,y)$ corresponding to  the $m$-th power of
 the  uniform distribution on the unit disc in  
the plane $\mathbb R^2$.

\end{abstract}

\maketitle
\markboth{  F. G\"otze, A.Tikhomirov}{Product of random matrices}
% ----------------------------------------------------------------

\section{Introduction}
Let $m\ge1$ be a fixed integer. For any $n\ge 1$ consider mutually independent  identically distributed (i.i.d.)
 complex random variables $X^{(\nu)}_{jk},\quad{  }1\le j,k\le n$, $\nu=1,\ldots,m$
with $\E X^{(\nu)}_{jk}=0$ and $\E {|X^{(\nu)}_{jk}|}^2=1$ defined on a common probability space $(\Omega_n, 
\mathbb F_n,\Pr)$. Let
$\mathbf X^{(\nu)}$ denote  an $n\times n$  matrix with entries $[\mathbf X^{(\nu)}]_{jk}=\frac1{\sqrt{n}}
X^{(\nu)}_{jk}$, for $1\le j,k\le n$.
Denote by $\lambda_1,\ldots,\lambda_n$ the eigenvalues  of
the random  matrix
$\mathbf W:= \prod_{\nu=1}^m\mathbf X^{(\nu)}$
and define its empirical spectral distribution  function by
$$
\mathcal F_n(x,y)=\frac1n\sum_{k=1}^n\mathbb I\{\re{\lambda_k}\le x,\im{\lambda_k}\le y\},
$$
where $\mathbb I{\{B\}}$ denotes the indicator of an event $B$.
We shall investigate the
convergence of the expected spectral distribution $F_n(x,y)=\E \mathcal F_n(x,y)$
 to
the  distribution function  $G(x,y)$ corresponding to the $m$-th power of uniform distribution on the unit disc in the plane $\mathbb R^2$ with Lebesgue-density
$$
g(x,y)=\frac{1}{\pi m (x^2+y^2)^{\frac{m-1}{m}}}I\{x^2+y^2\le1\}.
$$
We consider  the  Kolmogorov distance between the distributions
$ F_n(x,y)$ and $G(x,y)$
$$
\Delta_n:=\sup_{x,y}|F_n(x,y)-G(x,y)|.
$$
The main result of this paper is the following
\begin{thm}\label{main}Let $\E X^{(\nu)}_{jk}=0$, $\E|X^{(\nu)}_{jk}|^2=1$.
Then, for any fixed $m\ge 1$,
\begin{equation}\notag
\lim_{n \to \infty} \sup_{x,y}|F_n(x,y)-G(x,y)|= 0.
\end{equation}
\end{thm}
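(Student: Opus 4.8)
The plan is to prove convergence of the empirical spectral distribution via the logarithmic potential (Hermitization) method, adapted to products of matrices. For $z \in \mathbb{C}$ write $\mathbf{W} - z\mathbf{I}$ and consider the Hermitian block matrix
$$
\mathbf{V}(z) = \begin{pmatrix} \mathbf{O} & \mathbf{W} - z\mathbf{I} \\ (\mathbf{W} - z\mathbf{I})^* & \mathbf{O} \end{pmatrix},
$$
whose spectrum is symmetric and consists of $\pm$ the singular values of $\mathbf{W} - z\mathbf{I}$. The logarithmic potential of $\mathcal{F}_n$ at $z$ equals $-\frac1n \log|\det(\mathbf{W} - z\mathbf{I})| = -\frac12 \int \log|t|\, d\mu_n^z(t)$, where $\mu_n^z$ is the expected symmetrized empirical singular-value distribution of $\mathbf{W}-z\mathbf{I}$. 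The strategy has three stages: (i) show that for each fixed $z$ the measure $\mu_n^z$ converges to a limiting measure $\mu^z$ determined by a self-consistent (quadratic/polynomial) equation coming from the product structure; (ii) verify uniform integrability of $\log|t|$ against $\mu_n^z$ near $t=0$ and at infinity, so that the logarithmic potentials converge, $U_n(z) \to U(z)$ for a.e. $z$; (iii) identify $U(z)$ with the logarithmic potential of $G$ and invoke the fact that a distribution is determined by its logarithmic potential, upgrading a.e.\ convergence of potentials to weak convergence of $F_n$, and finally to uniform (Kolmogorov) convergence since the limit $G$ has a bounded density and hence continuous distribution function.

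For stage (i) I would analyze the resolvent of $\mathbf{V}(z)$, or equivalently the matrix $(\mathbf{W}-z\mathbf{I})(\mathbf{W}-z\mathbf{I})^*$, using the standard approach for random matrices with independent entries: linearize the product by passing to the $nm \times nm$ block-companion matrix whose nonzero blocks are $\mathbf{X}^{(1)}, \ldots, \mathbf{X}^{(m)}$, so that $\mathbf{W}$ appears as a corner of a much more symmetric object whose entries are genuinely independent. Introducing Stieltjes transforms $s_\nu(\zeta,z)$ of the relevant block resolvent entries and using Schur complement / leave-one-out arguments together with concentration of quadratic forms (a martingale or Lindeberg-type argument to control fluctuations), one derives a closed system of $m$ coupled equations for the $s_\nu$. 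Solving this system gives the limiting Stieltjes transform and hence $\mu^z$; the measure $\mu^z$ will have an atom at $0$ precisely when $|z| > 1$ (reflecting that $\mathbf{W}-z\mathbf{I}$ is well-conditioned there), and will be atom-free with a specific density when $|z| < 1$.

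For stage (ii), the delicate point — and the main obstacle — is the control of the smallest singular value of $\mathbf{W}-z\mathbf{I}$: one needs that $\mu_n^z(\{|t| < \delta\})$ contributes negligibly to $\int \log|t|\, d\mu_n^z$, uniformly in $n$, for a.e.\ $z$. For a single matrix this is the classical hard estimate (Rudelson--Vershynin type lower bounds on the least singular value, or Girko-type smoothing arguments); for a product it is compounded by the fact that $\mathbf{W}$ is a product of possibly ill-conditioned factors, so one must either bound the least singular value of the product directly or exploit the linearized block model where the relevant matrix again has independent entries and classical least-singular-value bounds apply after subtracting the shift. I expect this to require the minimal moment assumptions to be supplemented in the intermediate steps by a truncation argument (replacing $X^{(\nu)}_{jk}$ by truncated variables, controlling the error in operator norm and in the logarithmic potential), and then removing the truncation at the end. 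Once uniform integrability is in hand, $U_n \to U$ a.e., and since $U$ is the logarithmic potential of the measure with density $g$ — which one checks by a direct computation of $\int \log|z-w|\, g(w)\, dA(w)$, using rotational symmetry to reduce to a one-dimensional integral in $|z|$ — the unicity theorem for logarithmic potentials yields $F_n \Rightarrow G$ weakly, and continuity of $G$ promotes this to $\Delta_n \to 0$.
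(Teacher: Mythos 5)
Your outline follows the same architecture as the paper: Hermitization of $\mathbf W-z\mathbf I$, convergence of the (symmetrized) singular value distributions to a limit determined by a self-consistent system, identification of the limiting logarithmic potential with that of $g$, truncation to reduce to bounded entries, and the unicity theorem for logarithmic potentials (Saff--Totik) to conclude weak convergence, upgraded to Kolmogorov convergence by continuity of $G$. The one genuinely different technical choice is in your stage (i): you propose linearizing the product via the $nm\times nm$ block-companion matrix and then running Schur-complement/leave-one-out arguments on a matrix with independent entries. The paper instead works directly with the $2n\times 2n$ product $\mathbf V_{[1,m]}=\prod_\nu\mathbf H^{(\nu)}$ and derives the closed system (for $s_n$, $t_n$, $u_n$) by a Stein-type integration-by-parts/Taylor expansion in the individual entries (Lemma \ref{teilor}), together with a martingale variance bound; the induction over the partial products $f_\nu$ replaces your system of $m$ coupled block equations. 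Both routes should lead to the same algebraic system (\ref{system0}); the companion-matrix route buys independence of entries at the cost of having to extract the corner block and handle the shift $z$, which no longer sits in a single block of the linearization.

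The place where your proposal is genuinely under-specified is stage (ii). Controlling the least singular value of $\mathbf W-z\mathbf I$ (which the paper does via $s_n(\mathbf W(z))\ge s_n(\mathbf X^{(1)}-z(\mathbf W^{(1)})^{-1})\prod_{\nu\ge2}s_n(\mathbf X^{(\nu)})$ and the single-matrix bound of \cite{GT:2010}) is \emph{not} sufficient for uniform integrability of $\log|t|$ near $t=0$: a polynomial lower bound $s_n\ge n^{-B}$ only caps each $\log s_j$ at $O(\log n)$, so one must also show that few singular values are small, i.e.\ a counting bound of the form $s_{n-j}\ge c\sqrt{j/n}$ for $j\ge n^\gamma$, so that $\frac1n\sum_{j\le n-n^\gamma}|\log s_j|$ from the lower range is $o(1)$. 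This is the Tao--Vu ingredient (their bound (8.4)), which the paper reproves via the negative second moment identity and a concentration estimate for the distance of a row to a deterministic subspace (Proposition \ref{Tao}, using Maurey's convex property $(\tau)$ under the second-moment-only hypothesis). Your sketch mentions Rudelson--Vershynin and Girko smoothing for the smallest singular value but does not identify this intermediate-singular-value step, which is the crux of making the argument work under $\E|X_{jk}|^2=1$ alone; without it the logarithmic potentials need not converge.
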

The result holds in the non-i.i.d. case too.
\begin{thm}\label{Lindeberg}
 Let $\E X^{(\nu)}_{jk}=0$, $\E|X^{(\nu)}_{jk}|^2=1$ and assume that
the random variables $X_{jk}^{(\nu)}$ have  uniformly  integrable second moments, i. e.
\begin{equation}
 \max_{\nu, j,k}\E|X_{jk}^{(\nu)}|^2I\{|X_{jk}^{(\nu)}|>M\}\to0\quad\text{as}\quad M\to\infty.
\end{equation}

Then
for any fixed $m\ge 1$,
\begin{equation}\notag
\lim_{n \to \infty} \sup_{x,y}|F_n(x,y)-G(x,y)|= 0.
\end{equation}

\end{thm}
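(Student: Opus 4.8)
\medskip
\noindent\textbf{A plan for proving Theorem~\ref{Lindeberg}.}
The plan is to reduce Theorem~\ref{Lindeberg} to Theorem~\ref{main} by a truncation step followed by a Lindeberg-type replacement, all inside Girko's logarithmic potential scheme. Recall that, with $\nu_n(z,\cdot)$ the empirical distribution of the singular values of $\mathbf W-z\mathbf I$, the potential
$$
U_n(z)=\int_0^\infty\log t\;\nu_n(z,dt)=\frac1{2n}\log\det\bigl((\mathbf W-z\mathbf I)(\mathbf W-z\mathbf I)^*\bigr)
$$
recovers $\mathcal F_n$ through $\mathcal F_n=\tfrac1{2\pi}\Delta U_n$ in the distributional sense. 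Hence $\Delta_n\to0$ will follow once one shows that for Lebesgue-almost every $z\in\mathbb C$ we have $\E U_n(z)\to U(z)$, where $U$ is the logarithmic potential of $G$, together with the uniform integrability of $\log t$ with respect to $\E\nu_n(z,\cdot)$ near $t=0$ and near $t=\infty$. This reduction underlies the proof of Theorem~\ref{main} and uses only $\E X^{(\nu)}_{jk}=0$ and $\E|X^{(\nu)}_{jk}|^2=1$, so it is available here as well.

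\smallskip
\noindent\emph{Step 1 (truncation).} Using the uniform integrability assumption, pick $\tau_n\downarrow0$ tending to $0$ slowly and replace each $X^{(\nu)}_{jk}$ by its recentred, renormalised truncation $\widetilde X^{(\nu)}_{jk}$ at level $\tau_n\sqrt n$, so that $\E\widetilde X^{(\nu)}_{jk}=0$ and $\E|\widetilde X^{(\nu)}_{jk}|^2=1+o(1)$ uniformly in $\nu,j,k$. From the telescoping identity
$$
\mathbf W-\widetilde{\mathbf W}=\sum_{\nu=1}^{m}\mathbf X^{(1)}\cdots\mathbf X^{(\nu-1)}\bigl(\mathbf X^{(\nu)}-\widetilde{\mathbf X}^{(\nu)}\bigr)\widetilde{\mathbf X}^{(\nu+1)}\cdots\widetilde{\mathbf X}^{(m)},
$$
the bound $\E\|\mathbf X^{(\nu)}-\widetilde{\mathbf X}^{(\nu)}\|_2^2=o(n)$, and the fact that the truncated factors have operator norm $O(1)$ with probability $1-o(1)$, one controls the bounded--Lipschitz distance between $\E\nu_n(z,\cdot)$ and its truncated counterpart and checks that the smallest singular values contribute negligibly to $\E U_n(z)$. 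It therefore suffices to prove the theorem for entries bounded by $\tau_n\sqrt n$ with variances tending to $1$ uniformly.

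\smallskip
\noindent\emph{Step 2 (Lindeberg replacement).} Assume now $|X^{(\nu)}_{jk}|\le\tau_n\sqrt n$ with $\tau_n\to0$. Fix $z$ and $\zeta$ with $\im\zeta\ge v>0$ and consider the Stieltjes transform
$$
s_n(z,\zeta)=\frac1{2n}\,\E\,\Tr\bigl(\mathbf H_n(z)-\zeta\mathbf I\bigr)^{-1},\qquad
\mathbf H_n(z)=\begin{pmatrix}\mathbf O&\mathbf W-z\mathbf I\\(\mathbf W-z\mathbf I)^*&\mathbf O\end{pmatrix},
$$
of the Hermitization of $\mathbf W-z\mathbf I$. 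Replace the entries $X^{(\nu)}_{jk}$ one at a time by independent standard complex Gaussian variables $Y^{(\nu)}_{jk}$. Since each $X^{(\nu)}_{jk}$ enters $\mathbf H_n(z)$ through a rank-one perturbation of operator norm $O(n^{-1/2})$ (the remaining truncated factors having operator norm $O(1)$), a third-order Taylor expansion of the resolvent together with $\|(\mathbf H_n(z)-\zeta\mathbf I)^{-1}\|\le v^{-1}$ shows that a single replacement changes $s_n(z,\zeta)$ by $O\!\bigl(v^{-C}\tau_n n^{-3/2}\bigr)+o(n^{-2})$; summing over the $mn^2$ replacements gives $s_n(z,\zeta)-s_n^{Y}(z,\zeta)\to0$ for each fixed $v>0$. (The residual terms from the mismatch of the pseudo-variances $\E(\widetilde X^{(\nu)}_{jk})^2$ and $\E(Y^{(\nu)}_{jk})^2$ do not affect the limit, by the same mechanism that renders the circular law insensitive to $\E X^2$; if one prefers, one first passes through an intermediate ensemble of Gaussians with matching pseudo-variances.) Letting $v\downarrow0$ and invoking the uniform integrability from Step~1 transfers the comparison to the potentials. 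Since the $Y$-ensemble is i.i.d., Theorem~\ref{main} gives $F_n^{Y}\to G$, hence $\E U_n^{Y}(z)\to U(z)$ for a.e.\ $z$, and therefore $\E U_n(z)\to U(z)$, which yields $\Delta_n\to0$.

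\smallskip
\noindent\emph{Main obstacle.} The weight of the argument lies in the uniform integrability of $\log t$ near $t=0$ — equivalently a lower bound on the least singular value of $\mathbf W-z\mathbf I$, uniform in $n$ for almost every $z$ and stable under the Lindeberg condition — which is needed both for Girko's reduction and to justify $v\downarrow0$ in Step~2. Because $\mathbf W-z\mathbf I$ is a shifted product, not a matrix with independent entries, one passes to the $mn\times mn$ cyclic block matrix $\mathbf V$ carrying $\mathbf X^{(1)},\ldots,\mathbf X^{(m-1)}$ on the block superdiagonal and $\mathbf X^{(m)}$ in the lower-left corner; its $m$-th power has the cyclic products $\mathbf X^{(1)}\cdots\mathbf X^{(m)},\,\mathbf X^{(2)}\cdots\mathbf X^{(m)}\mathbf X^{(1)},\ldots$ as diagonal blocks, so the eigenvalues of $\mathbf W$ are the $m$-th powers of those of $\mathbf V$ and the two empirical spectral distributions correspond under $w\mapsto w^{m}$. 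The matrix $\mathbf V-w\mathbf I$ does have independent blocks, so least-singular-value bounds of Rudelson--Vershynin / Tao--Vu type apply after truncation (and, if necessary, after a vanishing Gaussian regularisation) and can be made uniform in $n$ for almost every $w$. Establishing these bounds under only the uniform integrability of the second moments is the technically delicate part; the telescoping estimate of Step~1, the operator-norm bounds for the truncated factors, and the resolvent expansion of Step~2 are routine.
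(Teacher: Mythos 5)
Your proposal does not match the paper's argument, and as it stands it has a structural gap: it is circular relative to the logical order of the results. The paper proves Theorem~\ref{Lindeberg} \emph{directly} and states explicitly that Theorem~\ref{main} ``follows immediately'' from it; there is no independent proof of the i.i.d.\ (or Gaussian) case anywhere in the paper. Your Step~2 concludes by invoking ``Theorem~\ref{main} for the $Y$-ensemble,'' i.e.\ it presupposes exactly the statement that is deduced from the theorem you are asked to prove. To make your route non-circular you would need a self-contained proof for the Gaussian product ensemble: convergence of the singular value distributions of $\mathbf W-z\mathbf I$ for each $z$, identification of the resulting logarithmic potential with $U_{\mu^{(m)}}$, and the small-singular-value bounds needed to integrate $\log t$ near $0$. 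That is essentially the entire content of Sections~3--6 of the paper (the system of equations (\ref{system0}) derived via the Stein-type identities of Lemma~\ref{teilor} and Lemma~\ref{derivatives}, the potential computation of Section~\ref{property}, and the singular value analysis of Section~\ref{singular}), and none of it appears in your proposal.

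Even taken on its own terms the plan defers, rather than solves, the hard steps. First, the bookkeeping in Step~2 fails: $mn^2$ swaps each costing $O(v^{-C}\tau_n n^{-3/2})$ sum to $O(v^{-C}\tau_n n^{1/2})$, which diverges; for the telescoping to close each swap must cost $O(v^{-C}\tau_n n^{-2})$, the extra factor $n^{-1}$ coming from the normalized trace. For a product of matrices the perturbation produced by one entry is $n^{-1/2}\mathbf X^{(1)}\cdots\mathbf X^{(\nu-1)}\mathbf e_j\mathbf e_k^T\mathbf X^{(\nu+1)}\cdots\mathbf X^{(m)}$, whose norm is controlled only in the mean via row/column moment bounds of the type proved in Lemma~\ref{norm4}; verifying that the third-order Taylor terms are summable is precisely the content of Lemma~\ref{derivatives}, which you do not establish. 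Second, you correctly identify the least and intermediate singular value bounds --- needed both for Girko's reduction and to let $v\downarrow0$ --- as the crux, but you only sketch a linearization strategy and concede it is ``the technically delicate part.'' The paper handles this via the factorization $s_n(\mathbf W(z))\ge s_n(\mathbf X^{(1)}-z(\mathbf W^{(1)})^{-1})\prod_{\nu=2}^m s_n(\mathbf X^{(\nu)})$ of Lemma~\ref{thm1}, combined with the Tao--Vu-type bound $s_j\ge c\sqrt{(n-j)/n}$ for $j\le n-n^{\gamma}$, which rests on the distance concentration of Proposition~\ref{Tao} (Maurey's inequality) and Lemma~\ref{vot}. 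Without these ingredients your text is a programme for a proof, not a proof.
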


\begin{defn}
 Let $\mu_n(\cdot)$ denote the  empirical spectral measure of an $n\times n$ random matrix $\bold X$ and
let $\mu(\cdot)$ denote the  uniform distribution on the unit disc in the complex plane $\mathbb C$.
We say that  the circular law holds for  random matrices $\bold X$  if
$\E\mu_n(\cdot)$ converges weakly  to the measure $\mu(\cdot)$ in the complex plane $\mathbb C$.
\end{defn}

\begin{rem}For $m=1$ we recover the  well-known circular law for random  matrices
 \cite{GT:2010}, \cite{TV:2010}.
\end{rem}
%\begin{rem}The case $m=2$ we have been consider in \cite{GT:10b}. 
%\end{rem}

Theorems \ref{main} and \ref{Lindeberg} describe the asymptotics  of
the  spectral distribution of a product of $m$ independent random matrices. This generalizes the result of \cite{GT:2010} and \cite{TV:2010}. 

%%%%%%%%%%%%%%%%%%%%%%%%%%%%%%%%%%%%%%%%%%%%%%%%%%%%%%%%%%%%%%%%%%%%
\subsection {Discussion of results} The proof of these results are based 
on the author's investigations on  asymptotics of the singular spectrum of product and powers of random matrices with independent entries 
(see \cite{AGT:2010}, \cite{AGT:2010a}, \cite{AGT:2010b}).
Our results give a full  description of the complex  spectral  distribution of products of large random matrices. 
The results  mentioned on the   asymptotic distribution of the singular spectrum of products of independent random matrices where already obtained some time
ago by the authors, see {\cite{AGT:2010a}, \cite{AGT:2010}}.  
Related previous  results concerned bounds for the expectation of the operator
norm of two independent matrices, see Bai (1986), \cite {Bai:1986}.   
In Bai (2007), \cite{Bai:2007},  the asymptotic distribution of the product of a
sample covariance matrix and an independent  Wigner matrix is investigated.
Some questions about the asymptotic distribution of products and powers of
random matrices were  studied in  Free Probability. 
For example, in Banica et al. (2008), \cite{Capitaine:2008},
the asymptotic distribution of the singular value distribution 
 of the  product of squares  of independent Gaussian  random matrices is
 determined.  In Speicher (2008),  \cite{Speicher:2008}, the same asymptotic
 distribution has been obtained for the singular value distribution 
of   products and powers of random matrices. 

A related result for norms has been obtained by Haagerup and Torbj{\o}nson
\cite{HT:2005},  who proved that if $\mathbf X^{(1)}, \ldots,\mathbf X^{(m)}$ 
is  a system of independent Gaussian random matrices and $x_1,\ldots, x_r $
is a corresponding  semi-circular system in a $C^*$ probability space, then
for every polynomial $p$ in $r$ non commuting variables we have an asymptotic 
norm equality
\begin{equation}
 \lim_{n\to\infty}\|p(\mathbf X^{(1)},\ldots,\mathbf X^{(m)})\|=\|p(x_1,\ldots,x_m)\|
\end{equation}
which holds almost surely.

Our result on  the asymptotic
distribution of the  complex eigenvalues of products of large (non-Hermitian
and non Gaussian) random matrices seemed to be new. After finishing this paper
we learned though that  the case of products of {\it Gaussian} had been 
studied  by Burda et al. (2010),  \cite{Burda-Janik-Waclaw:2010},
with our  main result stated as  conjecture,  supported by simulations.

%%%%%%%%%%%%%%%%%%%%%%%%%%%%%%%%%%%%%%%%%%%%%%%%%%%%%%%%%%%%%%%%%%%%%%%
 We expect that
results of this  type will be useful for the analysis 
of some models of wireless communication. See for instance, \cite{Hwang:2009}.
%%%%%%%%%%%%%%%%%%%%%%%%%%%%%%%%%%%%%%%%%%%%%%%%%%%%%%%%%%%%%%%%%%%%%%

The results of both Theorems \ref{main} and \ref{Lindeberg} may be considered
 as generalizations of the circular law,
see e.g. \cite{GT:2010} for some history on  the circular law and its proof.

To prove the claim of  both Theorems \ref{main} and \ref{Lindeberg} we use the logarithmic potential approach as  in \cite{GT:2010}.
We may divide this approach into two parts. The first part deals with 
the  investigation of the asymptotic distribution of the singular values 
of shifted matrices $\mathbf W(z):=\mathbf W-z\mathbf I$. 
To study these distributions  we use the  method developed
 in \cite {AGT:2010b} for the case $z=0$.
The other part  will be the investigation of small singular values of matrices $\mathbf W(z) $ for any 
$z\in\mathbb C$. This problem  may be  divided again in two parts. 
The first part consists of  the investigation of smallest singular values. 
Here we may use  our results in  \cite{GT:2010}  or the results in \cite{TV:2010}.
The second part  deals with  the investigation of the  singular values between
the smallest one to the 
$j$th smallest one, where  $j\ge n-n^{\gamma}$ for some $0<\gamma<1$. 
Here we use a  modification of techniques  of Tao and Vu in \cite{TV:2010}.

%%%%%%%%%%%%%%%%%%%%%%%%%%%%%%%%%%%%%%%%%%%%%%%%%%%%%%%%%%%%%%%%%%%%%%%%%%%%%%%%%%%%%%%%%%
In the remaining parts of the  paper we give  the proof of Theorem \ref{Lindeberg}.  Theorem \ref{main}
follows immediately from \ref{Lindeberg}. We shall use the logarithmic
potential method which is outlined in detail in \cite{GT:2010}.

In Section \ref{convergence} we derive  the 
 approximation of the  singular measure of the shifted matrix 
$\mathbf W(z)$ for any $z\in\mathbb C$. This allows us to prove the
convergence of the empirical spectral
 measure of the matrix $\bold W(z)$ to the
 corresponding limit  measure in $\mathbb R^2$.
The convergence is proved in Section \ref{proof}.

{In the what follows we shall denote by $C$ and $c$ or $\delta,\rho, \eta$ (without indices) 
some general absolute constant which may be change from one line to next one.
To specify a constant we shall use subindices. By $I\{A\}$ we shall denote the indicator of an event $A$.
For any matrix $\bold G$ we denote the Frobenius norm by $\|\bold G\|_2$ and
we denote by $\|\bold G\|$ its operator norm.}

%=================================================================================================================================================
%
%======================================================================================================
%=====In the present paper we shall consider bounds f

%%%%%%%%%%%%%%%%%%%%%%%%%%%%%%%%%%%%%%%%%%%%%%%%%%%%%%%%%%%%%%%%%%%%%%%%%%%%%%%%%%%%%%%%%%%%%%%%%%%%%%%%%%%%%%%%%%%%%%%%%%%%%%%%%%%%%%%%%%%%
{\bf Acknowledgment}. The authors would like to thank  Sergey  Bobkov
for helpful discussions concerning Maurey's result and Gernot  Akemann for
drawing our attention to the paper \cite{Burda-Janik-Waclaw:2010}.
%%%%%%%%%%%%%%%%%%%%%%%%%%%%%%%%%%%%%%%%%%%%%%%%%%%%%%%%%%%%%%%%%%%%%%%%%%%%%%%%%%%%%%%%%%%%%%%%%%%%%

\section{Auxiliary Results}In this Section we describe a
 symmetrization of one-sided distribution and  a special
representation of symmetrized distributions of squares  singular
values of random matrices and prove some lemmas about a truncation
of entries of random matrices.
\subsection{Symmetrization}We shall use the following  ``symmetrization'' of one-sided distributions. Let $\xi^2$
 be a positive random variable with distribution function $F(x)$. Define
$\widetilde \xi:=\varepsilon\xi$ where $\varepsilon$ is a
 Rademacher random variable with
$\Pr\{\varepsilon=\pm1\}=1/2$ 
which is independent of $\xi$.  Let $\widetilde F(x)$ denote
the distribution function of $\widetilde \xi$. It satisfies the
equation
\begin{equation}\label{sym}
\widetilde F(x)=1/2(1+\text{\rm sign} \{x\}\,F(x^2)),
\end{equation}
We apply this symmetrization to the distribution of the squared
singular values of the matrix $\mathbf W(z)$. Introduce the following matrices
\begin{align}\notag
\mathbf V:=\left(\begin{matrix}&\mathbf W &\mathbf O\\
&\mathbf O &\mathbf W^*\end{matrix}\right),\quad\mathbf J(z):=\left(\begin{matrix}&\mathbf O &z\mathbf I\\
&\overline z\mathbf I\end{matrix}\right),\quad \mathbf J:=\mathbf J(1).
\end{align}
Here and in the what follows  $\mathbf A^*$ denotes the adjoined (transposed and complex conjugate)  matrix $\mathbf A$ 
and $\mathbf O$ denotes the matrix with zero-entries.
Consider matrix
\begin{equation}
\mathbf V(z):=\mathbf V\mathbf J-\mathbf J(z).
\end{equation}
Note that ${\mathbf V(z)}$ is a Hermitian matrix. 
The eigenvalues of the matrix ${\mathbf V(z)}$ are $-s_1,\ldots,-s_n,$ $s_n,\ldots,s_1$.
Note that the symmetrization of the distribution function $\mathcal
F_n(x,z)$ is a function $\widetilde{\mathcal F}_n(x,z)$ is the empirical 
distribution function of the  non-zero eigenvalues of the  matrix ${\mathbf V(z)}$. By (\ref{sym}), we have
\begin{equation}\notag
\Delta_n=\sup_x|\widetilde F_n(x,z)-\widetilde
G(x,z)|,
\end{equation}
where $\widetilde F_n(x,z)=\E\widetilde{\mathcal F}_n(x,z)$ and
$\widetilde G(x,z)$ denotes the  symmetrization of the distribution function
$G(x,z)$.

\subsection{Truncation}
We shall now modify the random matrix $\bold X^{(\nu)}$ by truncation of its entries. In this section we shall
 assume that the random variables $X_{jk}^{(\nu)}$ satisfy the following Lindeberg condition: for any $\tau>0$
\begin{equation}\label{lindeb}
 L_n(\tau)=\max_{1\le\nu\le m}\frac1{n^2}\sum_{j,k=1}^n\E|X_{jk}^{(\nu)}|^2
I\{|X_{jk}^{(\nu)}|\ge\tau\sqrt n\}\to0,\quad\text{as}\quad n\to\infty.
\end{equation}
It is straightforward to check that this Lindeberg condition follows from uniform integrability.
  We introduce the 
random variables
$X^{(\nu,c)}_{jk}=X^{(\nu)}_{jk}I_{\{|X^{(\nu)}_{jk}|\le
c\tau_n\sqrt n\}}$ with $\tau_n\to0$  and the  matrices $\mathbf X^{(\nu,c)}=\frac1{\sqrt
{n}}({X^{(\nu,c)}_{jk}})$ and $ \mathbf W^{(c)}:= \prod_{\nu=1}^m{{\mathbf
X^{(\nu,c)}}}$. Denote by
$s_1^{(c)}\ge\ldots\ge s_n^{(c)}$ the singular values  of the
random  matrix $\mathbf W^{(c)}-z\mathbf I $. Let $\mathbf V^{(c)}:=\left(\begin{matrix}&\mathbf
W^{(c)} &\mathbf O\\&\mathbf O &{\mathbf W^{(c)}}^*\end{matrix}\right)$. We
define  the empirical distribution of the matrix $\mathbf V^{(c)}(z)=\mathbf V^{(c)}\mathbf J-\mathbf J(z)$ by 
$\widetilde{\mathcal
F}_n^{(c)}(x)=\frac1{2n}\sum_{k=1}^nI{\{{s_k^{(c)}}\le
x\}}+\frac1{2n}\sum_{k=1}^nI{\{{-s_k^{(c)}}\le x\}}$. Let
$s_n(\alpha,z)$ and $s_n^{(c)}(\alpha,z)$ denote the  \nobreak{Stieltjes} transforms of
the distribution functions $\widetilde F_n(x)$ and $\widetilde
F_n^{(c)}(x)=\E\widetilde{\mathcal F}_n^{(c)}(x)$
respectively.
Define the resolvent matrices $\mathbf R=({\mathbf V}(z)-\alpha\mathbf I)^{-1}$ and
$\mathbf R^{(c)}=({\mathbf V}^{(c)}(z)-\alpha\mathbf I)^{-1}$, where $\mathbf I$ denotes the identity matrix of 
corresponding dimension. Note that
\begin{equation}\notag
 s_n(\alpha,z)=\frac1{2n}\E\Tr \mathbf R,\qquad\text{and}\qquad
s_n^{(c)}(\alpha,z)=\frac1{2n}\E\Tr \mathbf R^{(c)}.
\end{equation}
Applying the  resolvent equality
\begin{equation}\label{resolvent_equality}
 (\mathbf A+\mathbf B-\alpha\mathbf I)^{-1}=(\mathbf A-\alpha\mathbf I)^{-1}-(\mathbf A-\alpha\mathbf I)^{-1}
\mathbf B(\mathbf A+\mathbf B-\alpha\mathbf I)^{-1},
\end{equation}
 we get
\begin{equation}\label{resolv}
 |s_n(\alpha,z)-s_n^{(c)}(\alpha,z)|\le \frac1{2n}\E|\Tr \mathbf R^{(c)}
(\mathbf V(z)-\mathbf V^{(c)}(z))\mathbf J\mathbf
 R|.
\end{equation} 
Let
\begin{equation}\notag
\mathbf H^{(\nu)}=\left(\begin{matrix}&\mathbf X^{(\nu)}&\mathbf O
\\&\mathbf O&{\mathbf X^{(m-\nu+1)}}^*\end{matrix}\right)
\quad\text{and}\quad\mathbf H^{(\nu,c)}=\left(\begin{matrix}&\mathbf X^{(\nu,c)}&\mathbf O
\\&\mathbf O&{\mathbf X^{(m-\nu+1,c)}}^*\end{matrix}\right)
\end{equation}
Introduce the matrices
$$
\mathbf V_{a,b}=\prod_{q=a}^b\mathbf H^{(q)},\quad\mathbf V_{a,b}^{(c)}=\prod_{q=a}^b\mathbf H^{(q,c)}.
$$
We have
\begin{equation}\label{repr1}
\mathbf V(z)-\mathbf V^{(c)}(z)=[\mathbf V-\mathbf V^{(c)}]\mathbf J=\left[\sum_{q=1}^{m-1}\mathbf V^{(c)}_{1,q-1}
(\mathbf H^{(q)}
-\mathbf H^{(q,c)})\mathbf V_{q+1,m}\right]\mathbf J.
\end{equation}
 Applying $\max\{\|\mathbf
R\|,\,\|\mathbf R^{(c)}\|\}\le v^{-1}$, inequality (\ref{resolv}),  and  the  representations (\ref{repr1})  together, 
 we get
\begin{equation}\label{st1}
 |s_n(\alpha,z)-s_n^{(c)}(\alpha,z)|\le \frac{C}{\sqrt n}\sum_{q=1}^{m}\E ^{\frac12}
\|(\mathbf X^{(q+1)}-\mathbf X^{(q+1,c)})\|_2^2\frac{1}{\sqrt n}\E^{\frac12}\|\mathbf V^{(c)}_{1,q-1}
\mathbf R\mathbf R^{(c)}\mathbf V_{q+1,m}\|_2^2.
\end{equation}
By  multiplicative inequalities  for the matrix norm, we get
\begin{equation}\notag
 \E\|\mathbf V^{(c)}_{1,q-1}\mathbf R\mathbf R^{(c)}\mathbf V_{q+1,m}\|_2^2
\le\frac C{v^4}\E\|\mathbf V^{(c)}_{1,q-1}\mathbf V_{q+1,m}\|_2^2
\end{equation}
Applying the result of Lemma \ref{norm2}, we obtain
\begin{equation}\label{st2}
 \E\|\mathbf V_{1,q-1}^{(c)}\mathbf R\mathbf R^{(c)}\mathbf V_{q+1,m}\|_2^2\le \frac{Cn}{v^4}.
\end{equation}
Direct calculations show that
\begin{equation}\label{st3}
 \frac1n\E\|\mathbf X^{(q)}-\mathbf X^{(q,c)}\|_2^2\le \frac C{n^2}\sum_{j,k=1}^n\E|X^{(q)}_{jk}|^2
I_{\{|X^{(q)}_{jk}|\ge c \tau_n\sqrt n\}}\le CL_n(\tau_n).
\end{equation}
Inequalities (\ref{st1}), (\ref{st2}) and {\ref{st3}) together imply
\begin{equation}\label{stieltjes1}
 |s_n(\alpha,z)-s_n^{(c)}(\alpha,z)|\le\frac {C\sqrt {L_n(\tau_n)}}{v^2}.
\end{equation}

Furthermore, by definition of $X_{jk}^{(c)}$, we have
\begin{equation}\notag
|\E X_{jk}^{(q,c)}|\le \frac1{c\tau_n\sqrt n}\E |{X_{jk}^{(q)}}|^2 I_{\{|X_{jk}|\ge c\tau_n\sqrt n\}}.
\end{equation}
This implies that
\begin{equation}\label{st4}
 \|\E\mathbf X^{(q,c)}\|_2^2\le \frac C{n}\sum_{j=1}^{n}\sum_{k=1}^{n}|\E X_{jk}^{(q,c)}|^2
\le
\frac {CL_n(\tau_n)}{c\tau_n^2}.
\end{equation}
Corresponding to ${\mathbf H}^{(\nu,c)}$ introduce  ${\widetilde{\mathbf H}}^{(\nu,c)}:=\left(\begin{matrix}&\mathbf X^{(\nu,c)}-\E\mathbf X^{(\nu,c)})
&\mathbf O\\&\mathbf O&(\mathbf X^{(\nu,c)}-\E\mathbf
X^{(\nu,c)})^*\end{matrix}\right)$ and for the matrices ${\mathbf W}^{(c)},
{\mathbf V}^{(c)},{\mathbf V}^{(c)}_{a,b}$ define matrices ${\widetilde{\mathbf W}}^{(c)}$, ${\widetilde{\mathbf V}}^{(c)}$, 
${\widetilde {\mathbf V}}^{(c)}_{a,b}$ respectively. Denote  by $\widetilde{\mathcal F}_n^{(c)}(x)$  
the empirical distribution of the   squared singular values of the matrix
 $\widetilde {\mathbf V}^{(c)}(z):=\widetilde {\mathbf V}^{(c)}\mathbf J-\mathbf J(z)$. Let 
${\widetilde s}_n^{(c)}(\alpha,z)$ denote the Stieltjes transform of the distribution function 
$\widetilde F_n^{(c)}=\E\widetilde {\mathcal F}_n^{(c)}$,
\begin{equation}\notag
 {\widetilde s}_n^{(c)}(\alpha,z)=\int_{-\infty}^{\infty}\frac1{x-\alpha}d\widetilde F_n^{(c)}(x).
\end{equation}

Similar to inequality (\ref{st1}) we get
\begin{equation}\notag
 |s_n^{(c)}(\alpha,z)-\widetilde s_n^{(c)}(\alpha,z)|\le \sum_{q=0}^{m-1}
\frac1{\sqrt n}\|\E\mathbf X^{(q,c)}\|_2\frac1{\sqrt n}\E^{\frac12}\|{\widetilde {\mathbf V}_{0,q}}^{(c)}
\mathbf R^{(c)}
\widetilde {\mathbf R}^{(c)}\widetilde{\mathbf V}^{(c)}_{q+1,m}\|_2^2.
\end{equation}
Similar  to inequality (\ref{st2}), we get
\begin{equation}\notag
 \frac1{ n}\E\|\widetilde {\mathbf V}_{0,q}^{(c)}\mathbf R^{(c)}
\widetilde {\mathbf R}^{(c)}\widetilde {\mathbf V}^{(c)}_{q+1,m}\|_2^2\le \frac C{v^4}.
\end{equation}
By inequality (\ref{st4}),
\begin{equation}\notag
 \|\E \mathbf X^{(q,c)}\|_2\le \frac{C\sqrt {L_n(\tau_n)}}{c\tau_n}.
\end{equation}
The last two inequalities together imply that
\begin{equation}\label{stieltjes2}
 |s_n^{(c)}(\alpha,z)-{\widetilde s}_n^{(c)}(\alpha,z)|\le  \frac{C\sqrt{L_n(\tau_n)}}{\sqrt n\tau_nv^2}\le 
\frac{\tau_n}{\sqrt nv^2}
\end{equation}
Inequalities (\ref{stieltjes1}) and (\ref{stieltjes2}) together imply that the
matrices $\mathbf W$ and
$\widetilde {\mathbf W}^{(c)}$ have the same limit distribution.
In the what follows we may assume without loss of generality  for any $\nu=1,\ldots, m$ and $j=1,\ldots n$, 
$k=1,\ldots,n$ and any $l=1,\ldots, m$, that
\begin{equation}\label{conditions}
 \E X^{(\nu)}_{jk}=0,\quad \E {X^{(\nu)}_{jk}}^2=1, \quad\text{and}\quad |X^{(\nu)}_{jk}|\le c\tau_n\sqrt n
\end{equation}
with
\begin{equation}\notag
 L_n(\tau_n)/\tau_n^2\le\tau_n.
\end{equation}

%%%%%%%%%%%%%%%%%%%%%%%%%%%%%%%%%%%%%%%%%%%%%%%%%%%%%%%%%%%%%%%%%%%%%%%%%%%%%%%%%%%%%%%%%%%%%%%%%%%%%%%%%%
%%%%%%%%%%%%%%%%%%%%%%%%%%%%%%%%%%%%%%%%

%%%%%%%%%%%%%%%%%%%%%%%%%%%%%%%%%%%%%%%%%%%%%%%%%%%%%%%%%%%%%%%%%%%%%%%%%%%%%%%%%%%%%%%%%%

%
%%%%%%%%%%%%%%%%%%%%%%%%%%%%%%%%%%%%%%%%%%%%%%%%%%%%%%%%%%
%

\section{The Limit Distribution of Singular Values of the  Matrices $\mathbf V(z)$}\label{convergence}
Recall that $\mathbf H^{(\nu)}=\left(\begin{matrix}&\mathbf X^{(\nu)}& \mathbf O\\
&\mathbf O & \mathbf {X^{(m-\nu+1)}}^*\end{matrix}\right)$ and 
$\bold J(z):=\left(\begin{matrix}&\mathbf O  & z\ \mathbf I \\&\overline z\ \mathbf I &\mathbf O\end{matrix}\right)$, 
$\mathbf J:=\mathbf J(1)$. For any $1\le \nu\le \mu\le m$, put
$$
\mathbf V_{[\nu,\mu]}=\prod_{k=\nu}^{\mu}\mathbf H^{(k)},\qquad\mathbf V=\mathbf V_{[1,m]}.
$$
and
$$
\mathbf V(z):=\mathbf V\mathbf J-\mathbf J(z).
$$
We introduce the following functions
\begin{align}\label{systemdef}
 s_n(\alpha,z)&=\frac1n\sum_{j=1}^n\E[\mathbf R(\alpha,z)]_{jj}=\frac1n\sum_{j=1}^n
\E[\mathbf R(\alpha,z)]_{j+nj+n}=\frac1{2n}\sum_{j=1}^{2n}\E[\mathbf R(\alpha,z)]_{jj}\notag\\
t_n(\alpha,z)&=\frac1n\sum_{j=1}^n\E[\mathbf R(\alpha,z)]_{j+nj},\quad
u_n(\alpha,z)=\frac1n\sum_{j=1}^n\E[\mathbf R(\alpha,z)]_{jj+n}.
\end{align}

\begin{thm}\label{respect}
 If the random variables $X_{jk}^{(\nu)}$ satisfy the Lindeberg condition
 (\ref{lindeb}), the following  limits exist
$$
y=y(z,\alpha)=\lim_{n\to\infty}s_n(\alpha,z),\quad t=t(z,\alpha)=\lim_{n\to\infty}t_n(\alpha,z),
$$
and  satisfy the  equations
\begin{align}\label{system0}
 &1+wy+(-1)^{m+1}w^{m-1}y^{m+1}=0,\notag\\
&y(w-\alpha)^2+(w-\alpha)-y|z|^2=0,\notag\\
&w=\alpha+\frac{zt}y.
\end{align}
\begin{rem}
 Since the Lindeberg condition holds for i.i.d. random variables and for
 uniformly  integrable  random variables the  
conclusion of Theorem \ref{respect} holds by  Theorem \ref {main} and Theorem \ref{Lindeberg}.
\end{rem}

\end{thm}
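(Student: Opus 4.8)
By the symmetrization and truncation carried out in Section~2 we may assume the entries satisfy \eqref{conditions}: $\E X^{(\nu)}_{jk}=0$, $\E(X^{(\nu)}_{jk})^2=1$, $\E|X^{(\nu)}_{jk}|^2=1$ and $|X^{(\nu)}_{jk}|\le c\tau_n\sqrt n$ with $L_n(\tau_n)/\tau_n^2\le\tau_n\to0$. Fix $\alpha$ with $v:=\im\alpha>0$. Since $\|\mathbf R(\alpha,z)\|\le v^{-1}$, the numbers $s_n(\alpha,z),t_n(\alpha,z),u_n(\alpha,z)$ are bounded, so it suffices to show that every subsequential limit satisfies \eqref{system0} and that \eqref{system0} admits a unique solution; then the full sequence converges to it. The difficulty is that the factorization $\mathbf V=\mathbf H^{(1)}\cdots\mathbf H^{(m)}$ is invisible at the level of the $2n\times 2n$ resolvent $\mathbf R=(\mathbf V(z)-\alpha\mathbf I)^{-1}$, so the plan is to \emph{linearize}.

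Introduce a block matrix $\mathcal V=\mathcal V(z,\alpha)$ of size $2mn$, obtained by arranging the blocks $\mathbf H^{(1)},\dots,\mathbf H^{(m)}$, $\mathbf J$, $\mathbf J(z)$ and $\alpha\mathbf I$ in companion form, so that the Schur complement of $\mathcal V$ onto its first $2n$ coordinates equals $\mathbf V(z)-\alpha\mathbf I$; equivalently, $\mathbf R(\alpha,z)$ is a fixed $2n\times 2n$ sub-block of $\mathcal V^{-1}$, and the shift $z$ enters only through the single block $\mathbf J(z)$. For $z=0$ this is precisely the device used in \cite{AGT:2010b}. Now $\mathcal V$ is built from \emph{independent} entries with the moments \eqref{conditions}, so the standard resolvent method applies. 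Removing, for each $j$, the $j$th and $(j+n)$th rows and columns of each $2n$-block, applying the Schur-complement formula, and using the independence of the removed row and column from the corresponding minor $\mathcal V_{(j)}$, I would replace the resulting quadratic forms $\frac1n\sum_k|X^{(\nu)}_{jk}|^2[(\mathcal V_{(j)})^{-1}]_{kk}$ and the bilinear forms carrying the factor $\E(X^{(\nu)}_{jk})^2=1$ by normalized traces $\frac1n\Tr(\cdot)$ of the appropriate sub-blocks, with errors that are $o(1)$ \emph{in expectation} (so no concentration bound for the trace itself is needed, only that $s_n,t_n,u_n$ are expectations). Averaging over $j$ produces a closed system for $s_n,t_n,u_n$ together with finitely many further normalized block traces of $\mathcal V^{-1}$; eliminating the auxiliary traces gives \eqref{system0} up to additive $o(1)$, with $w$ the combination $\alpha+zt_n/s_n$ dictated by the bookkeeping of the off-diagonal blocks.

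The errors are estimated as in Section~2: the multiplicative inequality for $\|\cdot\|_2$ and Lemma~\ref{norm2} bound the Frobenius norms of the products of $\mathbf H$-blocks and resolvents that occur by $O(\sqrt n\,v^{-c})$; the operator-norm bounds $\|\mathbf R\|\le v^{-1}$ and $\|\mathbf H^{(\nu)}\|=O(1)$ (valid with probability $1-o(1)$ after truncation, the exceptional event being harmless since only expectations are needed) control the rest; and the truncation estimate $L_n(\tau_n)/\tau_n^2\le\tau_n$ turns the leftover sums over large entries into factors $o(1)$. The net error in the system is $O(\mathrm{poly}(v^{-1})(\tau_n+n^{-c}))\to0$. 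Hence every subsequential limit $(y,t,\cdot)$ of $(s_n,t_n,u_n)$ satisfies the algebraic system \eqref{system0}. It then remains to show that \eqref{system0} has a unique solution in the branch fixed by the behaviour at infinity, namely $y\sim-\alpha^{-1}$, $t\to0$, $w\sim\alpha$ as $|\alpha|\to\infty$; for $m=1$ the first two equations reduce, with $w=\alpha$, to the semicircle equation $y^2+\alpha y+1=0$, recovering the circular-law input of \cite{GT:2010}. Uniqueness forces all subsequential limits to agree, so $s_n\to y$ and $t_n\to t$.

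The main obstacle is the propagation of errors through the $m$ factors: each block $\mathbf H^{(\nu)}$ generates its own Schur-complement expansion, and a crude estimate loses a power of $n$ or of $v^{-1}$ at each layer. The expansion must be organized so that each step rewrites the object as a resolvent of a matrix of the \emph{same} type with one fewer factor --- this is exactly what \cite{AGT:2010b} achieves for $z=0$ --- keeping the accumulated error $o(1)$ uniformly in $n$ with all $v^{-1}$ powers explicit; the delicate point of the present paper is to fold the shift block $\mathbf J(z)$ into this recursion without destroying the bounds. A secondary, purely algebraic, obstacle is to verify that \eqref{system0} indeed singles out a unique analytic solution on $\{\im\alpha>0\}$.
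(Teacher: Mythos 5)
Your plan takes a genuinely different route from the paper. The paper does \emph{not} linearize: it works directly with the $2n\times 2n$ resolvent $\mathbf R=(\mathbf V(z)-\alpha\mathbf I)^{-1}$ and uses a Stein/Taylor-type expansion (Lemma~\ref{teilor}: $\E\xi f(\xi)=f'(0)\E\xi^2+\E\xi^3 f''(\theta\xi)$) applied entrywise to each factor $X^{(\nu)}_{jk}$, computing the derivatives of $\mathbf V_{[\nu+1,m]}\mathbf J\mathbf R\mathbf V_{[1,m-\nu+1]}$ explicitly as in (\ref{a1}). The product structure is handled by an induction over the auxiliary quantities $f_\nu$, giving the recursion $f_\nu=f_{\nu+1}(-\alpha s_n-\overline z u_n)+\varepsilon_n$ and hence the power $(\alpha s_n+zt_n)^{m-1}$ in (\ref{supernu}); the relation $zt_n=\overline z u_n+\varepsilon_n$ and the substitution $w_n=\alpha+zt_n/y_n$ then produce the system. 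Your companion-matrix/Schur-complement scheme is a legitimate alternative (and is indeed the device of \cite{AGT:2010b} for $z=0$), with the advantage of reducing everything to a single matrix with independent entries; the paper's expansion has the advantage that the error terms are controlled by the concrete variance and derivative bounds of Lemmas~\ref{var0}--\ref{derivatives} without having to track the shift block through a $2mn$-dimensional recursion.

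That said, as a proof the proposal has two genuine gaps. First, the entire algebraic content of the theorem --- that the bookkeeping closes precisely into $1+wy+(-1)^{m+1}w^{m-1}y^{m+1}=0$ with $w=\alpha+zt/y$ and the quadratic relation $y(w-\alpha)^2+(w-\alpha)-y|z|^2=0$ --- is asserted ("eliminating the auxiliary traces gives (\ref{system0})") rather than derived; this is where the work lies, and you yourself flag the propagation of errors through the $m$ layers as an unresolved obstacle. Second, your convergence step ("uniqueness forces all subsequential limits to agree") presupposes a uniqueness statement for the limiting system that you explicitly leave open; moreover, since the system is only satisfied up to $o(1)$, one needs a \emph{stability} estimate, not bare uniqueness. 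The paper supplies exactly this: it shows $|y_n-y_{n'}|\le|\varepsilon_{n,n'}|$ for all $\alpha$ with $\im\alpha\ge V_0$ by a contraction argument on the pair of equations (\ref{100})--(\ref{101}), and then extends to the whole upper half-plane via Montel's theorem and the Nevanlinna property of $y_n$. Without these two steps the proposal is a plausible programme, not a proof.
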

\begin{proof}In the what follows we shall denote by $\varepsilon_n(\alpha,z)$ 
 a generic  error function such that 
$|\varepsilon_n(\alpha,z)|\le \frac{C\tau_n^q}{v^r}$ for some positive constants $C,p,r$.
 By the resolvent equality, we may write
\begin{equation}\label{b1}
 1+\alpha s_n(\alpha,z)=\frac1{2n}\E\Tr\mathbf V(z)\mathbf R(\alpha,z)=
\frac1{2n}\E\Tr \mathbf V\mathbf J\mathbf R(\alpha,z)-zt_n(\alpha,z)-\overline z u_n(\alpha,z).
\end{equation}
In the following we shall write $\mathbf R$ instead of $\mathbf R(\alpha,z)$.
Introduce the notation
\begin{equation}
 \mathcal A:=\frac1{2n}\E\Tr \mathbf V\mathbf J\mathbf R
\end{equation}
and {represent $\mathcal A$ as follows}
\begin{equation}
 \mathcal A=\frac12\mathcal A_1+\frac12\mathcal A_2,
\end{equation}
where
$$
\mathcal A_1=\frac1n\sum_{j=1}^n\E[\mathbf V\mathbf J\mathbf R]_{jj},\quad 
\mathcal A_2=\frac1n\sum_{j=1}^n\E[\mathbf V\mathbf J\mathbf R]_{j+n,j+n}.
$$
By definition of the matrix $\bold V$, we have
\begin{equation}
 \mathcal A_1=\frac1n\sum_{j,k=1}^n\E X^{(1)}_{jk}[\mathbf V_{[2,m]}\mathbf J\mathbf R]_{kj}.
\end{equation}
Let $\mathbf e_1,\ldots,\mathbf e_{2n}$ be an orthonormal basis  of $\mathbb R^{2n}$.
Note that 
\begin{align}
 \frac{\partial \mathbf V_{[2,m]}\mathbf J\mathbf R}{\partial X^{(1)}_{jk}}&=
\mathbf V_{[2,m-1]}\bold e_{k+n}\bold e_{j+n}^T\mathbf J\mathbf R\notag\\&-\mathbf V_{[2,m]}
\mathbf J\mathbf R\mathbf e_j\mathbf e_k^T\mathbf V_{[2,m]}\mathbf J\mathbf R-\mathbf V_{[2,m]}
\mathbf J\mathbf R\mathbf V_{[1,m-1]}\bold e_{k+n}\bold e_{j+n}^T\mathbf J\mathbf R.
\end{align}
Applying now the Lemmas \ref{teilor}, we obtain
\begin{equation}\label{super}
 \mathcal A_1=-\frac1n\sum_{k=1}^n\E[\mathbf V_{[2,m]}
\mathbf J\mathbf R\mathbf V_{[1,m-1]}]_{kk+n}\frac1n\sum_{j=1}^n\E[\mathbf J\mathbf R]_{j+n,j}+\varepsilon_n(z,\alpha).
\end{equation}

Introduce the  notation, for $\nu=2,\ldots, m$
\begin{equation}
 f_{\nu}=\frac1n\sum_{j=1}^n\E[\mathbf V_{[\nu,m]}
\mathbf J\mathbf R\mathbf V_{[1,m-\nu+1]}]_{jj+n}
\end{equation}
We rewrite the equality (\ref{super}) using these notations
\begin{equation}\label{super1}
 \mathcal A_1=-f_2s_n(\alpha,z)+\varepsilon_n(z,\alpha).
\end{equation}

We shall investigate the asymptotics of $f_{\nu}$ for $\nu=2,\ldots,m$.
By definition of the matrix $\mathbf V_{[\nu,m]}$, we have
\begin{equation}
 f_{\nu}=\frac1n\sum_{k,j=1}^n\E X^{(\nu)}_{jk}[\mathbf V_{[\nu+1,m]}
\mathbf J\mathbf R\mathbf V_{[1,m-\nu+1]}]_{kj+n}
\end{equation}
For {simplicity}  assume that $\nu\le m-\nu$.
Then
\begin{align}\label{a1}
 \frac{\partial\mathbf V_{[\nu+1,m]}
\mathbf J\mathbf R\mathbf V_{[1,m-\nu+1]}}{\partial X^{(\nu)}}&= \mathbf V_{[\nu+1,m-\nu]}\mathbf e_{k+n}
\mathbf e_{j+n}^T\mathbf V_{[m-\nu+2,m]}
\mathbf J\mathbf R\mathbf V_{[1,m-\nu+1]}\notag\\&+\mathbf V_{[\nu+1,m]}
\mathbf J\mathbf R\mathbf V_{[1,m-\nu]}\mathbf e_{k+n}\mathbf e_{j+n}^T\notag\\&+\mathbf V_{[\nu+1,m]}
\mathbf J\mathbf R\mathbf V_{[1,\nu-1]}\mathbf e_j\mathbf e_k^T\mathbf V_{[\nu+1,m-\nu+1]}\notag\\
&-\mathbf V_{[\nu+1,m]}
\mathbf J\mathbf R\mathbf V_{[1,\nu-1]}\mathbf e_j\mathbf e_k^T\mathbf V_{[\nu+1,m]}
\mathbf J\mathbf R\mathbf V_{[1,m-\nu+1]}\notag\\
&-\mathbf V_{[\nu+1,m]}
\mathbf J\mathbf R\mathbf V_{[1,m-\nu]}\mathbf e_{k+n}\mathbf e_{j+n}^T\mathbf V_{[m-\nu+2,m]}\mathbf J
\mathbf R\mathbf V_{[1,m-\nu+1]}
\end{align}

Applying the Lemmas \ref{teilor} again, we get
\begin{align}
 f_{\nu}&=\frac1n\sum_{k=1}^n\E[\mathbf V_{[\nu+1,m]}
\mathbf J\mathbf R\mathbf V_{[1,m-\nu]}]_{kk+n}\notag\\&-\frac1n\sum_{k=1}^n\E[\mathbf V_{[\nu+1,m]}
\mathbf J\mathbf R\mathbf V_{[1,m-\nu]}]_{kk+n}\frac1n\sum_{j=1}^n\E[\mathbf V_{[m-\nu+2,m]}\mathbf J
\mathbf R\mathbf V_{[1,m-\nu+1]}]_{j+nj+n}\notag\\&=f_{\nu+1}(1-\frac1n\sum_{j=1}^n
\E[\mathbf V_{[m-\nu+2,m]}\mathbf J\mathbf R\mathbf V_{[1,m-\nu+1]}]_{j+nj+n})
\end{align}
Note that
\begin{align}
 \frac1n\sum_{j=1}^n\E[\mathbf V_{[m-\nu+2,m]}\mathbf J\mathbf R\mathbf V_{[1,m-\nu+1]}]_{j+nj+n}=
\frac1n\sum_{j=1}^n\E[\mathbf V_{[1,m]}\mathbf J\mathbf R]_{j+nj+n}
\end{align}
Furthermore,
\begin{equation}\label{a10}
\frac1n\sum_{j=1}^n\E[\mathbf V_{[1, m]}\mathbf J\mathbf R]_{j+nj+n}=
1+\alpha s_n(\alpha,z)+\overline z u_n(\alpha,z).
\end{equation}

Relations (\ref{a1})--(\ref{a10}) together imply
\begin{equation}
 f_{\nu}=f_{\nu+1}(-\alpha s_n(\alpha,z)-\overline zu_n(\alpha,z))+\varepsilon_n(z,\alpha).
\end{equation}
By induction we get
\begin{equation}\label{super100}
 f_2=(-1)^{m-1}(\alpha s_n(\alpha,z)+\overline z u_n(\alpha,z))^{m-1}s_n(\alpha,z)+\varepsilon_n(z,\alpha).
\end{equation}
Relations (\ref{super1}) and (\ref{super100}) together imply
\begin{equation}\label{super101}
 \mathcal A_1=(-1)^{m}(\alpha s_n(\alpha,z)+\overline z u_n(\alpha,z))^{m-1}s_n^2(z,\alpha)+\varepsilon_n(z,\alpha).
\end{equation}
Similar we get that
\begin{equation}\label{super102}
g_2=(-1)^{m-1}(\alpha s_n(\alpha,z)+z t_n(\alpha,z))^{m-1}s_n(\alpha,z)+\varepsilon_n(z,\alpha). 
\end{equation}
and
\begin{equation}\label{b2}
 \mathcal A_2=(-1)^{m}(\alpha s_n(\alpha,z)+z t_n(\alpha,z))^{m-1}s_n^2(z,\alpha)+\varepsilon_n(z,\alpha). 
\end{equation}
Consider now the function $t_n(\alpha,z)$ which we may
 represent  as follows
\begin{equation}
 \alpha t_n(\alpha,z)=\frac1n\sum_{j=1}^n\E[\bold V(z)\bold R]_{j+nj}.
\end{equation}
By definition of the matrix $\mathbf H^{(1)}$, we may write
\begin{equation}
 \alpha t_n(\alpha,z)=\frac1n\sum_{j,k=1}^n\E X_{jk}^{(m)}[\bold V_{[2,m]}\bold J\bold R]_{j+nk}-
\overline z\ s_n(\alpha,z).
\end{equation}
For the derivatives of  the  matrix $\bold V_{[2,m]}\bold J\bold R $ by $X_{jk}^{(m)}$, we get
\begin{align}\label{super12}
\frac{\partial \bold V_{[2,m]}\bold J\bold R }{\partial X^{(m)}_{jk}}&=\bold V_{[2,m-1]}\bold e_j
\bold e_k^T\bold J\bold R\notag\\&-\bold V_{[2,m]}\bold J\bold R\bold e_{k+n}\bold e_{j+n}^T\bold V_{[2,m]}
\bold J\bold R-\bold V_{[2,m]}\bold J\bold R\bold V_{[1,m-1]}\bold e_j\bold e_k^T
\bold J\bold R.
\end{align}
Relation (\ref{super12}) and Lemmas (\ref{teilor})  together imply
\begin{align}
\alpha t_n(\alpha,z)&=-\frac1n\sum_{j=1}^n\E[\bold V_{[2,m]}\bold J\bold R\bold V_{[1,m-1]}]_{j+nj}
\frac1n\sum_{k=1}^n\E[\bold R]_{k+nk}
-\overline z s_n(\alpha,z)+\varepsilon_n(z,\alpha)\notag\\&
=g_2\ t_n(\alpha,z)-
\overline z\ s_n(\alpha,z)+\varepsilon_n(z,\alpha).
\end{align}
Applying equality (\ref{super102}), we obtain
\begin{equation}\label{super105}
 \alpha t_n(\alpha,z)=(-1)^{m}(\alpha s_n(\alpha,z)+\overline z u_n(\alpha,z))^{m-1}s_n(\alpha,z)t_n(\alpha,z)-
\overline z\ s_n(\alpha,z)+\varepsilon_n(z,\alpha).
\end{equation}
Analogously we obtain
\begin{equation}\label{super106}
  \alpha u_n(\alpha,z)=(-1)^{m}(\alpha s_n(\alpha,z)+z t_n(\alpha,z))^{m-1}s_n(\alpha,z)u_n(\alpha,z)-
 z\ s_n(\alpha,z)+\varepsilon_n(z,\alpha).
\end{equation}
Multiplying equation (\ref{super105}) by $z$ and equation  (\ref{super106}) by $\overline z$ and subtracting 
the second  one from the first equation, we may conclude
\begin{equation}\label{superchto}
z t_n(\alpha,z)=\overline z u_n(\alpha,z)+\varepsilon_n(z,\alpha).
\end{equation}
The last relation implies that
\begin{equation}\label{b3}
 \mathcal A_1=\mathcal A_2+\varepsilon_n(z,\alpha).
\end{equation}
Relations (\ref{b1}), \ref{super101}), (\ref{b2}), (\ref{superchto}, and (\ref{b3}) together imply
\begin{equation}\label{supernu}
 1+\alpha\ s_n(\alpha,z)=(-1)^m(\alpha s_n(\alpha,z)+z \ t_n(\alpha,z))^{m-1}s_n^2(z,\alpha)
-z \ t_n(\alpha,z)+\varepsilon_n(z,\alpha).
\end{equation}

Introduce the notations 
\begin{equation}
 y_n:=s_n(\alpha,z),\quad w_n:=\alpha+\frac{z\ t_n(\alpha,z)}{y_n}.
\end{equation}

%%%%%%%%%%%%%%%%%%%%%%%%%%%%%%%%%%%%%%%%%%%%%%%%%%%%%%%%%%%%%%%%%%%%%%%%%%%%%%%%%%%%%%%%%%%%%%%%
Using these notations  we may rewrite the equations (\ref{supernu}) and (\ref{superchto}) as follows 
%%%%%%%%%%%%%%%%%%%%%%%%%%%%%%%%%%%%%%%%%%%%%%%%%%%%%%%%%%%%%%%%%%%%%%%%%%%%%%%%%%%%%%
%
%%%%%%%%%%%%%%%%%%%%%%%%%%%%%%%%%%%%%%%%%%%%%%%%%%%%%%%%%%%%%%%%%
\begin{align}\label{system}
 &1+w_ny_n=(-1)^{m}y_n^{m+1}w_n^{m-1}+\varepsilon_n(z,\alpha)\notag\\
&(w_n-\alpha)+(w_n-\alpha)^2y_n-y_n|z|^2=\varepsilon_n(z,\alpha).
\end{align}
Let $n,n'\to\infty$. Consider the  difference $y_n-y_{n'}$.
From the first inequality it follows that
\begin{equation}
 |y_n-y_{n'}|\le \frac{|\varepsilon_{n,n'}(z,\alpha)|+|w_n-w_{n'}||y_n+(-1)^{m+1}y_{n'}^{m+1}
(w_n^{m-2}+\cdots+w_{n'}^{m-2})|}{|w_n+(-1)^{m+1}y_{n'}^{m+1}(w_n+(-1)^{m+1}w_{n}^{m-1}(y_n^{m}+
\cdots+y_{n'}^{m})|}
\end{equation}
Note that $\max\{|y_n|,\ |y_{n'}|\}\le \frac1v$ and $\max\{|w_n|,\ |w_{n'}|\}\le C+v$ for some positive constant
 $C=C(m)$ depending of $m$. We may choose a sufficiently large $v_0$ such that for any $v\ge v_0$ we obtain
\begin{equation}\label{100}
|y_n-y_{n'}|\le \frac{|\varepsilon_{n,n'}(z,\alpha)|}{v}+\frac Cv|w_n-w_{n'}|. 
\end{equation}
Furthermore, the second equation implies that
\begin{equation}
 (w_n-w_{n'})(1+y_n(w_n+w_{n'}-2\alpha))=(y_n-y_{n'})((w_n-\alpha)^2-|z|^2)+\varepsilon_{n,n'}(z,\alpha).
\end{equation}
It is straightforward to check that $\max\{|w_n-\alpha|, |w_{n'}-\alpha|\}\le (1+|\varepsilon_n(z,\alpha)|)|z|$. 
This implies that there exists $v_1$ such that for any $v\ge v_1$
\begin{equation}\label{101}
 |w_n-w_{n'}|\le |\varepsilon_{n,n'}(z,\alpha)|+4|z|^2|y_n-y_n'|.
\end{equation}
Inequalities (\ref{100}) and (\ref{101}) together imply that there exists a constant $V_0$ such that
for any $v\ge V_0$
\begin{equation}
 |y_n-y_n'|\le |\varepsilon_{n,n'}(\alpha,z)|,
\end{equation}
where $\varepsilon_{n,n'}(\alpha,z)\to 0$ as $n\to \infty$ uniformly with respect to $v\ge V_0$ and 
$|u|\le C$ ($\alpha=u+iv$). Since $y_n,y_{n'}$ are  locally
bounded {analytic functions in the upper half-plane}  
we may conclude by Montel's Theorem (see, for instance,
\cite{Conway}, p. 153, Theorem 2.9) that there exists an analytic  function
$y_0$   in the upper half-plane such that 
$\lim y_n=y_0$. Since $y_n$ are Nevanlinna functions, (that is analytic
functions mapping the 
upper half-plane into itself)  $y_0$ will be a Nevanlinna function too and there exists some 
distribution function $F(x,z)$ such that
$y_0=\int_{-\infty}^{\infty}\frac1{x-\alpha}dF(x,z)$ and
\begin{equation}\label{pip}
 \Delta_n(z):=\sup_x|F_n(x,z)-F(x,z)|\to 0\quad\text{as}\quad n\to\infty.
\end{equation}
The function $y_0$ satisfies the equations (\ref{system0}).

Thus Theorem \ref{respect} is proved.

\end{proof}

%%%%%%%%%%%%%%%%%%%%%%%%%%%%%%%%%%%%%%%%%%%%%%%%%%%%%%%%%%%%%%%%%%%%%%%%%%%%%%%%%%%%%%
\section{Properties of Limit Measures}\label{property}
In this section we study the measure $F(x,z)$ with Stieltjes transform 
$s(\alpha,z)=\int_{-\infty}^{\infty}\frac1{x-\alpha}d\ F(x,z)$ satisfying 
the equations
\begin{align}\label{shift}
 &1+wy+(-1)^{m+1}w^{m-1}y^{m+1}=0,\notag\\
&y(w-\alpha)^2+(w-\alpha)-y|z|^2=0.
\end{align}

Consider the first  equation in (\ref{shift}) with  $w=u
+\sqrt{-1} \, v$. 
Assume that there are two solutions of these equation, say  $y_1$ and $y_2$,  which are Stieltjes transform of some measures.
Then we have
\begin{equation}
(y_1-y_2)w+(-1)^{m+1}w^{m-1}(y_1-y_2)(y^{m}+\cdots+y_2^m)=0.
\end{equation}
Note that
\begin{equation}
 \im\{(-1)^{m+1}w^{m-1}y_j^m\}\ge 0,\quad j=1,2.
\end{equation}
Indeed, by equation (\ref{shift})
\begin{equation}
 \im\{(-1)^{m+1}w^{m-1}y_j^m\}=\frac{\im y_j}{|y_j|^2}-v\ge v\ \left(\frac{\E|\xi-w|^{-2}}{|\E(\xi-w)^{-1}|^2}-1\right)\ge 0.
\end{equation}
Note that if $\im\xi_j^m\ge0$ for $j=1,2$ then $\im\{\xi_1^k\xi_2^{m-k}\}\ge0$ for every $k=0,\ldots,m$. This implies 
that 
\begin{equation}
 \im\{(-1)^{m+1}w^{m-1}y_1^ky_2^{m-k}\}\ge 0.
\end{equation}
From here it follows that
\begin{equation}
 |w+(-1)^{m+1}w^{m-1}(y^{m}+\cdots+y_2^m)|\ge v>0
\end{equation}
and
\begin{equation}
 y_1=y_2.
\end{equation}
It is well-known that the Stieltjes transform of a distribution function
$F(x)$  with moments given by the  Fuss--Catalan numbers 
$FC(m,p)=\frac1{mp+p}\binom{mp+p}{p}$   satisfies the equation (\ref{shift})
(see,for instance, \cite{AGT:2010}). This distribution has  bounded support 
given by  $|w|\le C_m:= 
\sqrt{\frac{(m+1)^{m+1}}{m^m}}$.

The second equation has a solution
\begin{equation}\label{shift1}
 w-\alpha=\frac{-1+\sqrt{1+4y^2|z|^2}}{2y},
\end{equation}
with $\im\{w-\alpha\}\ge0$ and $|w-\alpha|\le|z|^2$.

\begin{cor}\label{st1*}Let $p(x,z)$ denote the density of the  measure $\nu(x,z)$ with Stieltjes transform $s(\alpha,z)$.
Then, for any $|z|$ and $|x|\ge C_m+|z|$, we have
\begin{equation}
p(x,z)=0
\end{equation}
Otherwise  $p(x,z)>0$ holds.
For $z=0$ we have
\begin{equation}
 p(x,z)=O(|x|^{-\frac{m-1}{m+1}})\quad\text{as}\quad x\to0.
\end{equation}

\end{cor}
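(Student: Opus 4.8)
The plan is to recover $p(x,z)$ from $s(\alpha,z)$ through the Stieltjes--Perron inversion formula
$$
p(x,z)=\frac1\pi\lim_{v\to0+}\im s(x+iv,z),
$$
which turns the statement into one about the boundary values of $y=s(\alpha,z)$ as $\alpha=x+iv\to x\in\mathbb R$. First I would make explicit the reduction implicit in this section: the first equation in (\ref{shift}) is exactly the equation for the Stieltjes transform $S_m(\cdot)$ of the symmetrized Fuss--Catalan law, which is supported on $[-C_m,C_m]$ as recorded above, and the uniqueness of the Stieltjes-transform solution of that equation (proved just before (\ref{shift1})) gives
$$
s(\alpha,z)=S_m(w),\qquad w=\alpha+\beta(w),\qquad \beta(w)=\frac{-1+\sqrt{1+4\,S_m(w)^2|z|^2}}{2\,S_m(w)},
$$
with $\im\beta(w)\ge0$ and $|\beta(w)|\le|z|^2$ by (\ref{shift1}). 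After clearing the square root the system (\ref{shift}) becomes polynomial, so $s(\cdot,z)$ is algebraic in $\alpha$ and extends continuously and finitely to $\mathbb R$ off a finite set; in particular $\nu(\cdot,z)$ is absolutely continuous, and the whole question is where the boundary value $w=w(x+i0)$ is real: there $p(x,z)=\tfrac1\pi\im S_m(w)=0$, while if $\im w(x+i0)>0$ then $\im S_m(w)>0$ (because $\im w\ge\im\alpha$) and $p(x,z)>0$.

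For the vanishing part let $|x|\ge C_m+|z|$. The quickest route is that $\nu(\cdot,z)$ is the limit of the symmetrized singular-value distribution of $\mathbf W-z\mathbf I$ and, by Weyl's inequality $s_k(\mathbf W-z\mathbf I)\le s_k(\mathbf W)+|z|$ together with the fact that the Fuss--Catalan law has right edge $C_m^2$ (so the limiting singular-value measure of $\mathbf W$ is carried by $[0,C_m]$), this limit measure is supported in $[-(C_m+|z|),C_m+|z|]$; hence $p(x,z)=0$. One can also argue within Section \ref{convergence}: since $\beta(w)\to0$ as $|\alpha|\to\infty$ and $\beta$ is controlled by (\ref{shift1}), for real $x$ with $|x|>C_m+|z|$ the equation $w=\alpha+\beta(w)$ has a real solution $w$ with $|w|>C_m$, which by uniqueness is $w(x+i0)$, so $\im S_m(w)=0$.

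For the positivity part I would show that $\nu(\cdot,z)$ has no interior gap: the endpoints of its support are the critical values of $w\mapsto\alpha(w)=w-\beta(w)$, read off from $\frac{d\alpha}{dw}=0$ together with the branch of $S_m$ that is real outside $[-C_m,C_m]$, and a connectedness/differential-inequality argument of the type used in the proof of Theorem \ref{respect} excludes interior edges; thus the support is a single symmetric interval, and $\im w(x+i0)>0$ on its interior, so $p(x,z)>0$ for $|x|<C_m+|z|$. For $z=0$ the second equation in (\ref{shift}) factors as $(w-\alpha)\bigl(s(\alpha,0)(w-\alpha)+1\bigr)=0$, and the factor $w-\alpha=-1/s(\alpha,0)$ is incompatible with $s(\alpha,0)\sim-\alpha^{-1}$ as $\alpha\to\infty$, so $w=\alpha$ and $\nu(\cdot,0)$ is exactly the symmetrized Fuss--Catalan law; its behaviour near the origin then follows either from the known left-edge rate $f_m(t)\sim c_m t^{-m/(m+1)}$ of the one-sided Fuss--Catalan density $f_m$ and the symmetrization identity $p(x,0)=|x|\,f_m(x^2)$, or directly from $1+\alpha y+(-1)^{m+1}\alpha^{m-1}y^{m+1}=0$: as $\alpha\to0$ the dominant balance forces $y=s(\alpha,0)\sim c_m\alpha^{-(m-1)/(m+1)}$ with $c_m^{m+1}=(-1)^m$ (so $|c_m|=1$), whence $p(x,0)=\tfrac1\pi\im s(x+i0,0)=O(|x|^{-(m-1)/(m+1)})$.

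The main obstacle is the branch bookkeeping behind the first two parts: one must be certain that the explicit real solution of the algebraic system (\ref{shift}) is the correct analytic continuation of $s(\alpha,z)$ to the real axis when $|x|\ge C_m+|z|$, and that no spurious real branch enters when $|x|<C_m+|z|$. This is where the uniqueness among Nevanlinna functions (established before (\ref{shift1})) and the continuity of $s(\cdot,z)$ up to $\mathbb R$ carry the weight; routing the vanishing half through the operator-norm bound $\|\mathbf W\|\to C_m$ is a convenient shortcut that concentrates the remaining difficulty in the interior-positivity claim.
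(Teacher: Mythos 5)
The paper gives no explicit proof of this corollary: it is meant to be read off from the two facts established immediately above it, namely that the first equation of (\ref{shift}) is the Fuss--Catalan equation whose Stieltjes-transform solution corresponds to a measure supported in $[-C_m,C_m]$, and that the second equation determines $w-\alpha$ explicitly via (\ref{shift1}) with $|w-\alpha|\le|z|$ (the paper's displayed bound $|z|^2$ is a slip; your computation, in effect $\bigl|\tfrac{\sqrt{1+u^2}-1}{u}\bigr|\le1$ with $u=2|y||z|$, is the correct one and is exactly what produces the radius $C_m+|z|$). Your treatment of the vanishing part and of the $z=0$ rate is therefore essentially the intended argument, and the dominant-balance computation $y\sim c_m\alpha^{-(m-1)/(m+1)}$ with $c_m^{m+1}=(-1)^m$ is correct and consistent with the symmetrization identity $p(x,0)=|x|f_m(x^2)$, $f_m(t)\sim t^{-m/(m+1)}$.

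The genuine gap is the interior-positivity claim, and it is not merely the ``remaining difficulty'' you defer to a connectedness argument: as stated, that claim is false, so your proposed route (the support of $\nu(\cdot,z)$ is a single symmetric interval with $\im w(x+i0)>0$ throughout its interior) cannot be completed. Corollary \ref{st0} of the same section gives $s(0,z)=0$ for $|z|>1$, i.e.\ $p(0,z)=0$ even though $0<C_m+|z|$; this reflects the fact that for $|z|>1$ the matrix $\mathbf W-z\mathbf I$ is uniformly invertible in the limit, so the limiting symmetrized singular-value law has a gap around the origin and its support is a union of two intervals, not one. Any correct version of the positivity statement must be restricted (e.g.\ to $|z|<1$, or to $x$ outside the central gap), and a proof has to locate the inner edges of the support as critical values of $w\mapsto w-\beta(w)$ rather than assume they do not exist. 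Since the rest of the paper uses only the outer support bound, the boundedness of $p(\cdot,z)$ and the $z=0$ rate, the error does not propagate, but your argument as written would be establishing a false statement.
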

It is straightforward to check that the logarithmic potential of the measure
$\mu^{(m)}$ (the $m$-th power of the uniform 
distribution on the unit circle) satisfies
\begin{equation}
 U_{\mu^{(m)}}(z)=\begin{cases}-\log|z|,\quad &|z|\ge1\\ \frac m2(1-|z|^{\frac 2m}),&|z|\le 1\end{cases}.
\end{equation}

\begin{cor}\label{st0}For $x=0$ we have
\begin{equation}
s(0,z)=\begin{cases}0,&|z|>1\\ \sqrt{-1}\frac{\sqrt{1-|z|^{\frac2m}}}{|z|^{1-\frac1m}},&|z|\le1\end{cases}.
\end{equation}
\end{cor}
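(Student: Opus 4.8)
The plan is to extract $s(0,z)$ by setting $\alpha=0$ (equivalently $u=0$, $v\to0^+$) in the system (\ref{shift}) and using the explicit solution (\ref{shift1}) of the second equation. First I would observe that at $\alpha=0$ the second equation becomes $yw^2+w-y|z|^2=0$, whose relevant root (the one with $\im w\ge0$, matching the Nevanlinna normalization) is
\begin{equation}\notag
 w=\frac{-1+\sqrt{1+4y^2|z|^2}}{2y},
\end{equation}
which is exactly (\ref{shift1}) with $\alpha=0$; in particular $w^{m-1}y^{m+1}$ and $wy$ can both be rewritten in terms of $y^2|z|^2$. The key algebraic point is that from $yw^2+w=y|z|^2$ we get $wy(w+1/y)\cdot$-type identities allowing us to eliminate $w$ from the first equation, reducing (\ref{shift}) at $\alpha=0$ to a single scalar equation for $y=s(0,z)$ as a function of $|z|$.

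Next, the two regimes $|z|>1$ and $|z|\le1$ should be separated. For $|z|>1$ I expect to show $s(0,z)=0$: this is consistent with the support statement in Corollary \ref{st1*} (the density $p(x,z)$ vanishes at $x=0$ when $|z|$ is large enough that $0$ lies outside the support) and with the fact that $\mathbf W(z)$ has no small singular values when $|z|>1$. Concretely, one checks that $y=0$ solves the limiting system at $\alpha=0$ (the first equation $1+wy+(-1)^{m+1}w^{m-1}y^{m+1}=0$ degenerates, but the correct reading is via $\alpha s(\alpha,z)\to$ a limit; cleaner is to argue from the logarithmic potential). For $|z|\le1$, I would substitute the ansatz $y=i\beta/|z|^{1-1/m}$ with $\beta=\sqrt{1-|z|^{2/m}}$ and verify it satisfies the reduced scalar equation, using $4y^2|z|^2 = -4\beta^2|z|^{2/m} = -4(1-|z|^{2/m})|z|^{2/m}$, so that $\sqrt{1+4y^2|z|^2}=\sqrt{(1-2|z|^{2/m})^2}=|1-2|z|^{2/m}|$ — a pleasant simplification that makes $w$ explicit and lets the first equation be checked directly.

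The cleanest route, which I would actually carry out, bypasses the algebra: use the identification of $s(\alpha,z)$ as the Stieltjes transform of the symmetrized singular-value measure of $\mathbf W(z)$, together with the relation between the logarithmic potential $U_{\mu^{(m)}}(z)$ (computed just above the corollary) and the singular spectrum. Since $\frac{\partial}{\partial\alpha}$ of the logarithmic integral $\int\log|x-\cdots|$ connects to $s(0,z)$, and $U_{\mu^{(m)}}(z)=\frac m2(1-|z|^{2/m})$ for $|z|\le1$ while $U_{\mu^{(m)}}(z)=-\log|z|$ for $|z|>1$, differentiating in $|z|$ recovers the stated formula for $s(0,z)$; the vanishing for $|z|>1$ follows because $-\log|z|$ is harmonic there, forcing the relevant boundary term to vanish.

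The main obstacle I anticipate is handling the branch of the square root and the sign of the imaginary part consistently: one must confirm that the root selected in (\ref{shift1}) is the one singled out by requiring $y$ to be a Nevanlinna function (so $\im y\ge0$ for $\im\alpha>0$), and then that the limit $v\to0^+$ of $y$ stays purely imaginary with $\im y\ge0$ precisely when $|z|\le1$, while $y\to0$ when $|z|>1$. The phase factor $|z|^{1-1/m}$ in the denominator must come out with the correct sign, and for $m\ge2$ the fractional powers $|z|^{2/m}$ require care that $|z|\le1$ keeps everything real and nonnegative under the radicals. Once the branch bookkeeping is pinned down, verifying (\ref{shift}) is a short computation, and the result matches Corollary \ref{st0} as stated.
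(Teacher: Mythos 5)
Your second paragraph contains the correct proof, and it is in substance the paper's own (the corollary is stated there without an explicit argument; the intended derivation is exactly the substitution you describe). Setting $\alpha=0$, the second equation of (\ref{shift}) gives $wy=\tfrac12\bigl(-1+\sqrt{1+4y^2|z|^2}\bigr)$, and with the ansatz $y=i\sqrt{1-|z|^{2/m}}\,|z|^{-(1-1/m)}$ one gets $1+4y^2|z|^2=(1-2|z|^{2/m})^2$; on the branch with $wy=-|z|^{2/m}$ one computes $(-1)^{m+1}w^{m-1}y^{m+1}=(-1)^{m+1}(wy)^{m-1}y^2=|z|^{2/m}-1$, so the first equation $1+wy+(-1)^{m+1}w^{m-1}y^{m+1}=0$ is satisfied, and uniqueness of the Nevanlinna solution (established earlier in the section) identifies this with $s(0,z)$. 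For $|z|>1$ the symmetry of $F(\cdot,z)$ together with the gap in its support at the origin gives $s(0,z)=\int x^{-1}dF(x,z)=0$ directly, which is cleaner than your remark that ``$y=0$ solves the degenerate system.'' One caution on the branch, since you flag it as the main obstacle: at $\alpha=0$ the condition $\im w\ge0$ does \emph{not} select the root --- both signs of the square root yield $\im w>0$ when $y$ is purely imaginary with $\im y>0$. The branch must be fixed by continuity from $v>0$, or equivalently by consistency with the first equation, which only the choice $\sqrt{1+4y^2|z|^2}=1-2|z|^{2/m}$ (a possibly negative real number) satisfies; the other root leads to $(1-|z|^{2/m})^m=|z|^2$, which fails in general.

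The route you say you would ``actually carry out,'' however --- recovering $s(0,z)$ by differentiating the formula for $U_{\mu^{(m)}}(z)$ --- is circular within the paper's logic. The identity $V(z)=U_{\mu^{(m)}}(z)$ is \emph{proved} in the lemma immediately following this corollary, and that proof uses Corollary \ref{st0} as an input: the value $\lim_{x\to0}\Delta(x)$ is read off from it and enters the evaluation of $\lim_{C\to\infty}\frac{\partial}{\partial u}B(C,z)$. Moreover, the assertion that the limiting eigenvalue measure of $\mathbf W$ is $\mu^{(m)}$ is the main theorem, which rests on this whole chain. So $U_{\mu^{(m)}}$ cannot be taken as known at this stage; the algebraic verification is the one to carry out.
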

We investigate now the  connection of family of measures $\nu(\cdot,z)$ with
the distribution of $\zeta^m$, where 
$\zeta$ is  uniformly  distributed   on the unit disc in the complex plane.
 We prove the following Lemma.
\begin{lem}\label{deriv1} For $z=u+iv$ we have
\begin{equation}
\frac{\partial s(x,z)}{\partial u}=\frac{s(x,z)}{\sqrt{1+4|z|^2s^2(x,z)}}\frac{\partial s(x,z)}{\partial x}
\end{equation}

\end{lem}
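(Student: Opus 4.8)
The plan is to differentiate the two defining equations (\ref{shift}) implicitly with respect to $u=\re z$ and solve the resulting linear system for $\partial s/\partial u$ in terms of $\partial s/\partial x$. Recall $s=y$ is the Stieltjes transform, $\alpha=x$ (we work on the real axis or treat $\alpha$ as the spectral variable), and $w=w(x,z)$ is the auxiliary quantity. The first equation, $1+wy+(-1)^{m+1}w^{m-1}y^{m+1}=0$, involves only $w$ and $y$ and not $z$ or $\alpha$ explicitly; the second, $y(w-\alpha)^2+(w-\alpha)-y|z|^2=0$, carries the $z$-dependence through $|z|^2=u^2+v^2$ and the $\alpha$-dependence through $w-\alpha$.

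First I would differentiate the first equation with respect to any variable: since it is a relation $\Phi(w,y)=0$, one gets $\Phi_w\,w' + \Phi_y\,y' = 0$, hence $w'$ and $y'$ are proportional with a fixed ratio $\kappa(w,y):=-\Phi_y/\Phi_w$ independent of which variable we differentiate by. In particular $\partial w/\partial u = \kappa\,\partial y/\partial u$ and $\partial w/\partial x = \kappa\,\partial y/\partial x$. Second, I would differentiate the second equation with respect to $u$ and with respect to $x$ separately. Differentiating in $u$: the explicit $u$-appearance is only through $-y|z|^2$, contributing $-2uy$; collecting the rest gives a linear relation of the form $A\cdot\partial y/\partial u + B\cdot(\partial w/\partial u) = 2uy$, where $A = (w-\alpha)^2 - |z|^2$ and $B = 2y(w-\alpha)+1$. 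Differentiating in $x=\alpha$: the explicit $\alpha$-appearance is through $(w-\alpha)$, so $\partial(w-\alpha)/\partial x = \kappa\,\partial y/\partial x - 1$; one obtains $A\cdot\partial y/\partial x + B\cdot(\partial w/\partial x - 1)\cdot(\text{appropriate grouping})$, i.e. the same coefficients $A,B$ with the inhomogeneous term now coming from the $-1$ in $\partial(w-\alpha)/\partial x$. Using $\partial w/\partial u=\kappa\,\partial y/\partial u$ and $\partial w/\partial x=\kappa\,\partial y/\partial x$ in both, the factor $(A+B\kappa)$ cancels when I take the ratio, leaving
\[
\frac{\partial y/\partial u}{\partial y/\partial x} = \frac{2uy}{B},
\]
after I identify the inhomogeneous term of the $x$-differentiated equation as exactly $B$ (from the $-1$ times the coefficient $2y(w-\alpha)+1$ of $(w-\alpha)$).

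Finally I would rewrite $B=2y(w-\alpha)+1$ using the explicit solution (\ref{shift1}), $w-\alpha=\dfrac{-1+\sqrt{1+4y^2|z|^2}}{2y}$: this gives $2y(w-\alpha)+1=\sqrt{1+4y^2|z|^2}$. With $u$ replaced appropriately — here the stated identity has coefficient $1$ rather than $2u$, so I should be careful: differentiating $|z|^2=u^2+v^2$ in $u$ gives $2u$, and one factor must be absorbed; most likely the intended normalization writes the derivative with respect to $u$ of $s$ as a function where the $|z|^2$ enters as a single real parameter $\rho=|z|^2$ with $\partial\rho/\partial u$ folded in, or there is a factor-of-$2$ convention in the statement. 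I would reconcile this by checking that the claimed formula is homogeneous-consistent and, if needed, state the lemma with $\partial/\partial(|z|^2)$ or carry the factor $2u$ explicitly; modulo this bookkeeping the identity
\[
\frac{\partial s(x,z)}{\partial u}=\frac{s(x,z)}{\sqrt{1+4|z|^2 s^2(x,z)}}\,\frac{\partial s(x,z)}{\partial x}
\]
drops out.

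The main obstacle I anticipate is not the algebra per se but justifying the implicit differentiation: one must know that $y(x,z)$ and $w(x,z)$ are genuinely differentiable in $(x,u,v)$ and that the branch of the square root in (\ref{shift1}) is chosen consistently so that $\Phi_w=w\,y' / y' \neq 0$ — i.e. that the Jacobian of the system (\ref{shift}) is nonsingular on the relevant region. This was essentially established in Section \ref{property} via the uniqueness argument (two Stieltjes-transform solutions of (\ref{shift}) must coincide, since $\im\{(-1)^{m+1}w^{m-1}(y_1^m+\cdots+y_2^m)\}\ge0$ forces the relevant coefficient to have modulus $\ge v>0$), so the implicit function theorem applies away from the edge of the support, and the density-positivity statement in Corollary \ref{st1*} pins down where that region is. Granting this regularity, the proof is the short computation above.
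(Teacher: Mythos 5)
Your proposal is correct and follows essentially the same route as the paper: implicit differentiation of the two equations in (\ref{shift}) with respect to $u$ and with respect to $x$, cancellation of the common Jacobian denominator in the ratio $\bigl(\partial y/\partial u\bigr)/\bigl(\partial y/\partial x\bigr)$, and the identification $2y(w-\alpha)+1=\sqrt{1+4y^2|z|^2}$ via (\ref{shift1}). Your observation about the factor $2u$ is also right: the paper's own proof (and its later application to $\Delta(x)$) carries the factor $2u$, so the lemma as displayed is missing it.
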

\begin{proof}
Let $y=s(x,z)$. Denote by $R_i(y,w,z)$, $i=1,2$ the functions
\begin{align}
 R_1:&=R_1(y,w,z,x):=1+wy+(-1)^{m+1}w^{m-1}y^{m+1},\notag\\
R_2&:=R_2(y,w,z,x):=(w-x)^2y+(w-x)-|z|^2y.\notag
\end{align}
 Differentiating   both functions with respect to  $x$ and by $u$, we get
\begin{align}\label{dif1}
\frac{\partial y}{\partial u}&=-\frac{-2yu}{\frac{\partial R_1}{\partial w}\frac{\partial R_2}{\partial y}-
\frac{\partial R_2}{\partial w}\frac{\partial R_1}{\partial y}}\frac{\partial R_1}{\partial w}\notag\\
\frac{\partial y}{\partial x}&=-\frac{-2(w-x)y-1}{\frac{\partial R_1}{\partial w}\frac{\partial R_2}{\partial y}-
\frac{\partial R_2}{\partial w}\frac{\partial R_1}{\partial y}}\frac{\partial R_1}{\partial w}.
\end{align}
It follows immediately that
\begin{equation}
\frac{\partial y}{\partial u}=\frac{-2uy}{-2(w-x)y-1} \frac{\partial y}{\partial x}
\end{equation}

Taking in account the equality (\ref{shift1}), we get

\begin{equation}
\frac{\partial y}{\partial u}=2u\frac{y}{\sqrt{1+4|z|^2y^2}}\frac{\partial y}{\partial x},
\end{equation}
which completes the proof.
\end{proof}
Introduce now the function
$$
V(z)=-\int_{-\infty}^{\infty}\log|x|d\nu(z,x).
$$

\begin{lem}The following relation holds
$$
V(z)=U_{\mu^{(m)}}(z).
 $$
\end{lem}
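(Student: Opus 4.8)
The plan is to show that $V(z)$ and $U_{\mu^{(m)}}(z)$ agree by verifying that they satisfy the same ODE in the radial variable together with the same boundary behavior. Since both functions depend on $z$ only through $|z|$ (the equations \eqref{shift} involve only $|z|^2$, hence so does the measure $\nu(\cdot,z)$, hence so does $V$; and $U_{\mu^{(m)}}$ is manifestly radial), it suffices to compare them as functions of $r=|z|$, or equivalently to compare $\frac{\partial}{\partial u}$ at points $z=u>0$ on the positive real axis.

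First I would differentiate $V(z)$ under the integral sign with respect to $u$. This gives
\[
\frac{\partial V}{\partial u}=-\int_{-\infty}^{\infty}\log|x|\,\frac{\partial}{\partial u}\,d\nu(z,x)=-\int_{-\infty}^{\infty}\frac{1}{x}\,\frac{\partial}{\partial u}N(x,z)\,dx,
\]
after an integration by parts in $x$, where $N(x,z)$ is the distribution function of $\nu(\cdot,z)$. Using Lemma \ref{deriv1}, which relates $\partial_u s$ to $\partial_x s$ through the factor $s/\sqrt{1+4|z|^2 s^2}$, together with the Stieltjes inversion formula (so that $\partial_u N$ is recovered from the boundary values of $\partial_u s(\alpha,z)$ as $\alpha\to x$), I would convert the $u$-derivative of $V$ into a contour/boundary integral of an explicit expression in $s$. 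The key input here is \eqref{shift1}, which identifies $\sqrt{1+4|z|^2 y^2}=2y(w-\alpha)+1$ and lets one collapse the integrand to something integrable in closed form. I expect the computation to yield
\[
\frac{\partial V}{\partial u}=\begin{cases}-\dfrac{1}{u},&u\ge 1,\\[2mm]-u^{\frac2m-1},&0<u\le 1,\end{cases}
\]
which matches $\partial_u U_{\mu^{(m)}}(u)$ read off from the displayed formula for $U_{\mu^{(m)}}$.

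Then I would pin down the constant of integration. For $|z|>1$ one uses Corollary \ref{st0}, $s(0,z)=0$, which forces $0\notin\mathrm{supp}\,\nu(\cdot,z)$ and more precisely shows $\nu(\cdot,z)$ is supported away from the origin; combined with the fact that $\nu(\cdot,z)$ is a symmetrized probability measure whose non-symmetrized part is the squared-singular-value law of $\mathbf W(z)$, one gets $V(z)=-\log|z|$ for large $|z|$ directly, or at least $V(z)\to 0-\log|z|\to$ the right asymptotics as $|z|\to\infty$ (since the singular values of $\mathbf W-z\mathbf I$ are dominated by $|z|$). Matching at $|z|=1$ by continuity (both $V$ and $U_{\mu^{(m)}}$ are continuous in $z$, the latter obviously, the former because $\nu(\cdot,z)$ depends continuously on $z$) then forces equality on $|z|\le 1$ as well, giving $V(z)=U_{\mu^{(m)}}(z)$ for all $z$.

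The main obstacle will be the boundary-value computation in the middle step: justifying the interchange of $\partial_u$ with the integral near the edges of the support (where $p(x,z)$ may blow up like $|x|^{-(m-1)/(m+1)}$ at $0$ per Corollary \ref{st1*}, though this is integrable against $\log|x|$), and carefully evaluating the resulting singular integral using the branch of the square root with $\im\{w-\alpha\}\ge 0$. I would handle the interchange by working with the Stieltjes transform $s(\alpha,z)$ for $\alpha$ in the upper half-plane, where everything is analytic and the identity of Lemma \ref{deriv1} holds pointwise, establishing $\partial_u V(z)=\frac1\pi\lim_{v\downarrow 0}\im\int s(\cdot)\,d(\cdot)$ type formulas, and only passing to the boundary at the very end; the explicit algebraic form \eqref{shift1} makes the final integral elementary.
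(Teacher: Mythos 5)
Your strategy is the same as the paper's: both reduce the claim to matching the $u$-derivatives $\partial_u V=\partial_u U_{\mu^{(m)}}$ (exploiting that everything is radial in $z$), both use Lemma \ref{deriv1} as the key input to trade $\partial_u s$ for $\partial_x s$, and both invoke Corollary \ref{st0} for the boundary values that produce the case split at $|z|=1$. The one substantive difference is in how the resulting integral is evaluated, and it is exactly the point you flag as your ``main obstacle.'' You propose to differentiate $-\int\log|x|\,p(x,z)\,dx$ directly and recover $\partial_u p$ from real-axis boundary values of $s$ via Stieltjes inversion, which forces you to confront the edge singularities of $p(\cdot,z)$ and a delicate limit $v\downarrow 0$. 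The paper sidesteps all of this by writing $V(z)=\lim_{C\to\infty}\bigl(\int_0^C\Delta(x)\,dx-\log C\bigr)$ up to a vanishing correction, where $\Delta(x)=-\sqrt{-1}\,s(z,\sqrt{-1}x)$ is the Stieltjes transform evaluated on the imaginary axis: there $\Delta$ is real, nonnegative and bounded by $\tfrac1{2|z|}$, Lemma \ref{deriv1} turns $\partial_u\int_0^C\Delta\,dx$ into $2u\int_0^C\frac{\Delta\,\partial_x\Delta}{\sqrt{1-4|z|^2\Delta^2}}dx$, and the substitution $a=\Delta(x)$ evaluates it in closed form with no boundary-value analysis at all. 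So your plan is workable but you have deferred precisely the step where the work lies; if you carry it out, I would recommend adopting the imaginary-axis device rather than Stieltjes inversion. On the other side of the ledger, your final step (pinning the additive constant via the large-$|z|$ behavior $V(z)\sim-\log|z|$ and continuity at $|z|=1$) is a genuine addition: the paper's displayed proof only establishes equality of $\partial_u$-derivatives and leaves the normalization implicit, so your argument closes a gap the paper glosses over.
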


\begin{proof}
We start from the simple equality, for $z=u+iv$,
$$
\frac{\partial U_{\mu^{(m)}}(z)}{\partial u} =\begin{cases}-\frac u{u^2+v^2}, &|z|\ge1\\
-\frac u{{(u^2+v^2)}^{\frac{m-1}{m}}}, &|z|<1\end{cases}.
$$
We prove that
$$
\frac{\partial V(z)}{\partial u}=\frac{\partial U_{\mu}(z)}{\partial u}.
$$
Let  $\Delta(x)=-\sqrt{-1}s(z,\sqrt{-1}x)$, where $x>0$. The symmetry of function $\nu(z,y)$ in $y$ implies 
that the function
$\Delta(x)$ will be real and non-negative.
We have
\begin{equation}\label{rep100}
\Delta(x)=\int_{-\infty}^{\infty}\frac {x}{x^2+y^2}d\nu(z,y).
\end{equation}
By Corollary \ref{st0}, we have
\begin{equation}\lim_{x\to0}\Delta(x)=\begin{cases}0, &|z|>1\\ \frac{\sqrt{1-|z|^{\frac2m}}}{|z|^{1-\frac1m}},&|z|
\le1\end{cases}
\end{equation}
Note that $\lim_{x\to\infty}\Delta(x)=0$. We consider integral
$$
B(C,z)=\int_0^C\Delta(x)dx.
$$
Using the representation (\ref{rep100}), we get
\begin{equation}
B(C,z)=-\int_{-\infty}^{\infty}\log|y|p(y,z)dy
+\frac12\int_{-\infty}^{\infty}\log(1+\frac{y^2}{C^2})p(y,z)dy+\log
C.
\end{equation}
We rewrite this equality as follows
\begin{equation}
 V(z)=B(C,z)+\frac12\int_{-\infty}^{\infty}\log(1+\frac{y^2}{C^2})p(y,z)dy+\log
C,
\end{equation}
which implies
\begin{equation}\label{deriv2}
\frac{\partial}{\partial u}V(z)=\frac{\partial}{\partial u}B(C,z)+\frac12\frac{\partial}{\partial u}
\int_{-\infty}^{\infty}\log(1+\frac{y^2}{C^2})p(y,z)dy.
\end{equation}

According to Lemma \ref{deriv1}, we get
\begin{equation}
\frac{\partial \Delta(x)}{\partial
u}=\frac{2u\Delta(x)}{\sqrt{1-4|z|^2\Delta^2(x)}}\frac{\partial
\Delta(x)}{\partial x},
\end{equation}
Note that the quantity $\Delta(x)$ satisfies  $0\le \Delta(x)\le \frac1{2|z|}$.  There exists a point $x_0$ such that 
$\Delta(x_0)=\frac1{2|z|}$. Thus  we
get
\begin{align}
\frac{\partial}{\partial
u}&\int_0^{C}\Delta(x)dx=\int_0^C\frac{\partial}{\partial
u}\Delta(x)dx=2u\int_0^C\frac{\Delta(x)}{\sqrt{1-4|z|^2\Delta^2(x)}}
\frac{\partial}{\partial x}\Delta(x)dx\\
&=u\left(\int_{\Delta(0)}^{\frac1{2|z|^2}}
+\int_{\Delta(C)}^{\frac1{2|z|^2}}
\right)\frac{d(a^2)}{\sqrt{1-4a^2|z|^2}}\notag\\&=\frac{-u}{2|z|^2}\left(\sqrt{1-4|z|^2\Delta^2(C)}+
\sqrt{1-4|z|^2\Delta^2(0)}\right)
\end{align}
Simple calculations show that in the limit  $C\to\infty$, we obtain
\begin{equation}\label{deriv3}
\lim_{C\to\infty}\frac{\partial}{\partial u}B(C,z)=\lim_{C\to\infty}\frac{\partial}{\partial u}\int_0^C\Delta(x)dx=
\begin{cases}{-\frac u{|z|^2},\quad \text{  if  }|z\ge1}\\
{-\frac{u}{|z|^{2-\frac2m}},\quad\text{  if  }|z|>1}\end{cases}.
\end{equation}
Consider now the quantity
$$
A(C)=\frac{\partial}{\partial
u}\int_{-\infty}^{\infty}\log\{1+\frac{y^2}{C^2}\}p(y,z)dy.
$$
By Corollary \ref{st1}, we have
\begin{equation}\label{rest}
A(C)=\frac{\partial}{\partial
u}\int_{-x_3}^{x_3}\log\left(1+\frac{y^2}{C^2}\right)p(y,z)dy.
\end{equation}
Using equality $p(y,z)=\im s(z,y)$, we may rewrite equality (\ref{rest}) as follows
\begin{equation}
 A(C)=\im\left\{\int_{-C_0}^{C_0}\log\left(1+\frac{y^2}{C^2}\right)\frac{\partial}{\partial
u}s(z,y)dy\right\}.
\end{equation}
Applying Lemma \ref{deriv1}, we get
\begin{equation}
 A(C)=\im\left\{\int_{-2C_0}^{2C_0}\log\left(1+\frac{y^2}{C^2}\right)
\frac{s(y,z)}{\sqrt{1+4|z|^2s^2(y,z)}}\frac{\partial s(y,z)}{\partial y}dy\right\}.
\end{equation}
Integrating by parts and using the inequality $|\log(1+\frac{y^2}{C^2})|\le \frac{\gamma y^2}{C^2}$
with some constant $\gamma>0$, and $|s(2C_0,z)|\le\frac1{C_0}$, and $|s(0,z)|\le \frac1{2|z|}$, we conclude that
\begin{equation}\label{deriv4}
 \lim_{C\to\infty}A(C)=0.
\end{equation}
Collecting the relations (\ref{deriv2}), (\ref{deriv3}), and (\ref{deriv4})
concludes the proof of the  Lemma.
\end{proof}

%%%%%%%%%%%%%%%%%%%%%%%%%%%%%%%%%%%%%%%%%%%%%%%%%%%%%%%%%%%%%%%%%%%%%%%%%%%%%%%%%%%%%%%%%%

%
%%%%%%%%%%%%%%%%%%%%%%%%%%%%%%%%%%%%%%%%%%%%%%%%%%%%%%%%%%%%%%%%%%%%%%%%%%%%%%%%%%%%%%%%%%%%%%%%%%

%%%%%%%%%%%%%%%%%%%%%%%%%%%%%%%%%%%%%%%%%%%%%%%%%%%%%%%%%%%%%%%%%%%%%%%%%%%%%%%%%%%%%%%%%%%%%%%%%%%
\section{The Minimal Singular Value of the Matrix $\bold W-z\bold I$}\label{singular}
Recall that 
$$
\bold W=\prod_{\nu=1}^m\bold X^{(\nu)},
$$
where $\bold X^{(1)},\ldots,\bold X^{(m)}$ are independent $n\times n$ matrices with independent entries.
Let $\bold W(z)=\bold W-z\bold I$ and let $s_n(\bold A)$ denote
the minimal singular value of a matrix $\bold A$.
Note that
$$
s_n(\bold A)=\inf_{\bold x:\|\bold x\|=1}\|\bold A \bold x\|_2.
$$
Introduce the matrix $\bold W^{(1)}=\prod_{\nu=2}^m\bold X^{(\nu)}$.
We may write
\begin{equation}
 s_n(\bold W(z))=\inf_{\bold x:\|\bold x\|=1}\|\bold W(z)\bold x\|_2\ge
\inf_{\bold x:\|\bold x\|=1}\|(\bold X^{(1)}-z(\bold W^{(1)})^{-1})\bold x\|_2\inf_{\bold x:\|\bold x\|=1}
\|\bold W^{(1)}\bold x\|_2.
\end{equation}
By induction, we obtain
\begin{equation}\label{minim}
 s_n(\bold W(z))\ge s_n(\bold X^{(1)}-z(\bold W^{(1)})^{-1}))\prod_{\nu=2}^ms_n(\bold X^{(\nu)}).
\end{equation}
\begin{lem}
 \label{thm1} Let $X_{jk}^{(\nu)}$ be independent  complex random variables with $\E X_{jk}=0$ and $\E|X_{jk}|^2=1$, 
which are  
uniformly integrable , i.e.
\begin{equation}\label{unif0}
\max_{j,k,\nu}\E|X_{jk}^{(\nu)}|^2I_{\{|X_{jk}|>M\}}\to0\quad\text{as}\quad M\to \infty.
\end{equation}
Let $K\ge 1$. Then there exist constants $c, C, B >0$ depending on $\theta$  and $K$ such that for any $z\in\mathbb C$ 
and positive $\varepsilon$ we have
\begin{align}
\Pr\{s_n\le\varepsilon/n^B ;\ \max_{1\le\nu\le m}s_1(\bold X^{(\nu)})\le Kn\}\le
\exp\{ - c\, \, n \}+\frac{C\sqrt{\ln n}}{\sqrt{n}},
\end{align}
where $s_n=s_n(\bold W(z))$.
\end{lem}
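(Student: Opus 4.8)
The plan is to reduce, via inequality \eqref{minim}, the lower bound on $s_n(\mathbf W(z))$ to a lower bound on $s_n(\mathbf X^{(1)}-z(\mathbf W^{(1)})^{-1})$ together with lower bounds on each $s_n(\mathbf X^{(\nu)})$, $\nu=2,\ldots,m$. On the event $\{\max_\nu s_1(\mathbf X^{(\nu)})\le Kn\}$ the factors $\mathbf W^{(1)}$ and $(\mathbf W^{(1)})^{-1}$ are controlled in operator norm except when some intermediate product is nearly singular; so the first step is to show that with probability $1-\exp\{-cn\}$ one has $s_n(\mathbf X^{(\nu)})\ge n^{-B_0}$ for each fixed $\nu$, and hence $\|(\mathbf W^{(1)})^{-1}\|\le n^{B_1}$ for a suitable $B_1=B_1(m,B_0)$. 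This is exactly the type of small-ball estimate for the least singular value of a single square matrix with i.i.d.\ (truncated, centered, unit variance) entries that is available from \cite{GT:2010} or \cite{TV:2010} under the uniform integrability assumption \eqref{unif0}; I would simply quote it. The point is that after this reduction the shift $z(\mathbf W^{(1)})^{-1}$ is a \emph{fixed} (conditionally on $\mathbf X^{(2)},\ldots,\mathbf X^{(m)}$) matrix of polynomially bounded norm, so $\mathbf X^{(1)}-z(\mathbf W^{(1)})^{-1}$ is a random matrix with independent entries perturbed by a deterministic matrix of norm $\le |z|\,n^{B_1}$.

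The second and main step is the least-singular-value bound for $\mathbf X^{(1)}-\mathbf A$ with $\mathbf A$ deterministic, $\|\mathbf A\|\le n^{B_1}$: namely
\[
\Pr\bigl\{s_n(\mathbf X^{(1)}-\mathbf A)\le \varepsilon n^{-B_2}\bigr\}\le \exp\{-cn\}+\frac{C\sqrt{\ln n}}{\sqrt n}.
\]
Here I would follow the standard $\varepsilon$-net / compressible–incompressible vector decomposition of Tao--Vu and Rudelson--Vershynin, adapted to a deterministic shift of polynomial size. One splits the unit sphere into compressible vectors (those close to sparse vectors) and incompressible ones. For compressible $\mathbf x$, a union bound over a net combined with a small-ball estimate for $\|(\mathbf X^{(1)}-\mathbf A)\mathbf x\|_2$ for fixed $\mathbf x$ gives the $\exp\{-cn\}$ term, using that a fixed row of $\mathbf X^{(1)}$ has a non-degenerate anti-concentration (Lindeberg/uniform integrability gives enough of a fourth-moment-free bound after truncation, or one invokes a Paley--Zygmund/Lévy-type small-ball inequality). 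For incompressible $\mathbf x$, one uses the ``distance to a hyperplane'' argument: $s_n(\mathbf X^{(1)}-\mathbf A)\ge n^{-1/2}\min_k \mathrm{dist}(\text{$k$-th column of }\mathbf X^{(1)}-\mathbf A,\ H_k)$ where $H_k$ is the span of the other columns, and $\mathrm{dist}(\cdot,H_k)=|\langle \mathbf n_k, \text{col}_k\rangle|$ for a unit normal $\mathbf n_k$ to $H_k$ which is independent of that column; incompressibility of $\mathbf n_k$ (this is where the bulk of the work lies and where \cite{TV:2010}'s machinery is used) together with a Berry--Esseen / anti-concentration estimate for $\langle \mathbf n_k,\text{col}_k\rangle$ yields the $C\sqrt{\ln n}/\sqrt n$ term. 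The deterministic shift $\mathbf A$ only changes the mean of the relevant linear form and does not affect anti-concentration, so it is harmless; its polynomial norm only enters in choosing the exponents $B$.

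The third step is bookkeeping: combine the bound on $s_n(\mathbf X^{(1)}-z(\mathbf W^{(1)})^{-1})$ with the bounds $s_n(\mathbf X^{(\nu)})\ge n^{-B_0}$ ($\nu\ge2$) through \eqref{minim}, choosing $B=B(m,B_0,B_1,B_2)$ large enough that $s_n(\mathbf W(z))\le \varepsilon/n^B$ forces one of the controlled events to fail; a union bound over the $m$ events then produces $m\exp\{-cn\}+mC\sqrt{\ln n}/\sqrt n$, which is of the claimed form after renaming constants. I expect the main obstacle to be the incompressible case of Step~2 --- specifically establishing that the random normal vector $\mathbf n_k$ is incompressible with probability $1-\exp\{-cn\}$ under only the uniform integrability (rather than a sub-Gaussian or bounded-density) hypothesis; this requires the truncation already performed in Section~2 (entries bounded by $c\tau_n\sqrt n$) and a careful replacement of sub-Gaussian concentration by the weaker Lindeberg-type estimates, essentially re-running the relevant parts of \cite{TV:2010} in the truncated setting. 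The remaining ingredients --- operator-norm control, anti-concentration of a single linear form, the net argument for compressible vectors --- are routine given the cited results.
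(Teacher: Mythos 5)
Your proposal follows essentially the same route as the paper: the reduction via (\ref{minim}), the union bound over the $m$ factors, the control of $\|(\bold W^{(1)})^{-1}\|$ through $\prod_{\nu\ge2}s_n^{-1}(\bold X^{(\nu)})$, and the conditioning on $\bold W^{(1)}$ to treat $z(\bold W^{(1)})^{-1}$ as a deterministic shift of polynomial norm. The only difference is that where you sketch a re-derivation of the shifted single-matrix bound via the compressible/incompressible decomposition, the paper simply invokes Theorem 4.1 of \cite{GT:2010} and observes that its proof is uniform over non-random shifts $\bold B$ with $\|\bold B\|_2\le Cn^Q$; note also that each single-matrix event already costs $\exp\{-cn\}+C\sqrt{\ln n}/\sqrt n$ (not merely $\exp\{-cn\}$), which your final bookkeeping correctly accounts for.
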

\begin{proof}
 The proof is similar to the proof of Theorem 4.1 in \cite{GT:2010}.
Applying inequality (\ref{minim}), we get
\begin{align}\label{inequal1}
 \Pr\{s_n&\le\varepsilon/n^B ;\ \max_{1\le\nu\le m}s_1(\bold X^{(\nu)})\le Kn\}\notag\\&
\le
\Pr\{s_n(\bold X^{(1)}-z(\bold W^{(1)})^{-1})\le\varepsilon^{\frac1m} n^{-\frac Bm};\ \max_{1\le
\nu\le m}s_1(\bold X^{(\nu)})\le Kn\}\notag\\&\qquad\qquad\quad+\sum_{\nu=2}^m\Pr\{s_n(\bold X^{(\nu)})\le 
\varepsilon^{\frac1m}n^{-\frac Bm};\ \max_{1\le\nu\le m}s_1(\bold X^{(\nu)})\le Kn\}.
\end{align}
Furthermore,
\begin{align}\label{inequal2}
 \Pr\{s_n&(\bold X^{(1)}-z(\bold W^{(1)})^{-1})\le\varepsilon^{\frac1m} n^{-\frac Bm};\ 
\max_{1\le\nu\le m}s_1(\bold X^{(\nu)})\le Kn\}\notag\\&\le\Pr\{s_n(\bold X^{(1)}-z(\bold W^{(1)})^{-1})
\le\varepsilon^{\frac1m} n^{-\frac Bm};\ s_1(\bold X^{(\nu)})\le Kn;\ s_1({\bold W^{(1)}}^{-1})\le n^B\}
\notag\\&\qquad\qquad\quad\qquad\qquad\quad\qquad\ +\Pr\{s_1({\bold W^{(1)}}^{-1})\ge n^B;\ 
\max_{1\le\nu\le m}s_1(\bold X^{(\nu)})\le Kn\}.
\end{align}
Note that
\begin{equation}
 s_1({\bold W^{(1)}}^{-1})\le \prod_{\nu=2}^ms_1({\bold X^{(\nu)}}^{-1})=
\prod_{\nu=2}^ms_n^{-1}(\bold X^{(\nu)}).
\end{equation}
Applying this inequality and Theorem 4.1 in \cite{GT:2010}, we obtain
\begin{equation}\label{inequal3}
 \Pr\{s_1({\bold W^{(1)}}^{-1})\ge n^B;\ \max_{1\le\nu\le m}s_1(\bold X^{(\nu)})\le Kn\}\le 
\exp\{ - c\, \, n \}+\frac{C\sqrt{\ln n}}{\sqrt{n}}.
\end{equation}
with some positive constants $C,c>0$.
Moreover, adapting the proof of Theorem 4.1 in \cite{GT:2010}, we  see that
this theorem holds for  all matrices  $\bold X^{(1)}-z\bold B$  uniformly for all non-random matrices $\bold B$ such that
$\|\bold B\|_2\le Cn^{Q}$ for  some positive constant $Q>0$, i.e. 
\begin{equation}
 \Pr\{s_n(\bold X^{(1)}-z\bold B)\le \varepsilon n^{-B}, s_1(\bold X^{(1)})\le Kn\}\le 
\exp\{ - c\, \, n \}+\frac{C\sqrt{\ln n}}{\sqrt{n}}.
\end{equation}
with a constant depending on $C$ and $Q$ and not depending on the matrix $B$.
Since the  matrices $\bold X^{(1)}$ and $\bold W^{(1)}$ are independent,
 we may apply this result and get
\begin{equation}\label{inequal4}
 \Pr\{s_n(\bold X^{(1)}-z{\bold W^{(1)}}^{-1})\le \varepsilon n^{-B}, s_1(\bold X^{(1)})\le Kn;
s_1({\bold W^{(1)}}^{-1})\le Cn^B\}\le \exp\{ - c\, \, n \}+\frac{C\sqrt{\ln n}}{\sqrt{n}}
\end{equation}
Collecting the inequalities (\ref{inequal1})--(\ref{inequal4}), we conclude
the proof of the Lemma.

\end{proof}
Following Tao and Vu \cite{TV:2010}, we may prove  sharper results about the
behavior of small  singular values of a matrix product.

We shall use the following well-known fact.
Let $\bold A$ and $\mathbf B$ be $n\times n$ denote  matrices and let $s_1(\mathbf A)\ge\cdots \ge s_n(\mathbf A)$ resp.
($s_1(\mathbf B)\ge\cdots\ge s_n(\mathbf B)$ and
$s_1(\mathbf A\mathbf B)\ge\cdots \ge s_n(\mathbf A\mathbf B)$) denote 
 the singular value of a matrix
$\mathbf A$ (and the matrices  $\mathbf B$ and  $\mathbf A\mathbf B$ respectively).
Then for any $1\le k\le n$ we have
\begin{equation}\label{prod1}
 \prod_{j=k}^ns_j(\mathbf A\mathbf B)\ge \prod_{j=k}^ns_j(\mathbf A)s_j(\mathbf B),
\end{equation}
and 
\begin{equation}
\prod_{j=1}^ns_j(\mathbf A\mathbf B)= \prod_{j=1}^ns_j(\mathbf A)s_j(\mathbf B)
\end{equation}
(see, for instance \cite{HJ:91}, p.171, Theorem 3.3.4).

We need to prove a bound similar to the bound (45) in \cite{TV:2010}, namely:
\begin{equation}
 \lim_{n\to\infty}\frac1n\sum_{j=n-n\delta_n}^{n-n^{\gamma}}\ln s_j(\mathbf W-z\mathbf I)=0,
\end{equation}
for any sequence $\delta_n\to0$.
To prove this bound it is enough to prove that for any $\nu=1,\ldots,m$ and any
fixed {sequence of matrices}
$\mathbf M_n$ with $\|\mathbf M_n\|_2\le Cn^{B}$ for some positive constant $B>0$
\begin{equation}
\lim_{n\to\infty}\frac1n\sum_{j=n-n\delta_n}^{n-n^{\gamma}}\ln s_j(\mathbf X^{(\nu)}+\mathbf M_n)=0. 
\end{equation}
Indeed, it follows from (\ref{prod1}), that
\begin{equation}\label{prod2}
 \frac1n\sum_{j=n-n\delta_n}^{n-n^{\gamma}}\ln s_j(\mathbf W-z\mathbf I)\ge
\frac1n\sum_{\nu=1}^{m-1}\sum_{j=n-n\delta_n}^{n-n^{\gamma}}\ln s_j(\mathbf X^{(\nu)})
+\frac1n\sum_{j=n-n\delta_n}^{n-n^{\gamma}}\ln s_j(\mathbf X^{(m)}+\mathbf M_n),
\end{equation}
where $\mathbf M_n^{-1}=\prod_{\nu=1}^{m-1}\mathbf X^{(\nu)}$.
Note that the matrices $X^{(m)}$ and $\mathbf M_n$ are independent and it
follows from our results in  \cite{GT:2010}, Lemma A1,  that
$\|\mathbf M_n\|_2\le Cn^B$ for some $B>0$ with probability close to one.
The relations
\begin{align}
 \lim_{n\to\infty}\frac1n\sum_{j=n-n\delta_n}^{n-n^{\gamma}}\ln s_j(\mathbf X^{(\nu)})&=0,\quad\text{for}
\quad \nu=1,\ldots,m-1,\notag\\
\lim_{n\to\infty}\frac1n\sum_{j=n-n\delta_n}^{n-n^{\gamma}}\ln s_j(\mathbf X^{(m)}+\mathbf M_n)&=0
\end{align}
follow from the bound
\begin{equation}\label{prod3}
 s_j(\mathbf X^{(\nu)}+\mathbf M_n)\ge c\sqrt{\frac{n-j}{n}},\quad 1\le j\le n-n^{\gamma}.
\end{equation}
To prove this we need  the following simple Lemma.
\begin{lem}\label{vot}
 Let $\lim_{n\to\infty}\delta_n=0$ and let  $s_j$, for $n-n\delta_n\le j\le
 n-n^{\gamma}$ with $0<\gamma<1$  denote numbers 
 satisfying  the inequality
\begin{equation}
 s_j\ge c\sqrt{\frac{n-j}n}.
\end{equation}
Then
\begin{equation}
 \lim_{n\to\infty}\frac1n\sum_{n-n\delta_n\le j\le n-n^{\gamma}}\ln s_j=0.
\end{equation}

\end{lem}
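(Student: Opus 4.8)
The engine of the proof is the elementary fact that $x\log x-x\to0$ as $x\to0^{+}$; once this is in place, the lower bound on the sum follows from the hypothesis by an integral comparison, and the matching upper bound is the easy direction, using only that the $s_j$ in question are a priori of polynomial size in $n$ together with $\delta_n\to0$ (without some such upper control the displayed limit cannot hold, so it must be read as being available from the context in which the Lemma is applied).

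For the lower bound I would first pass to the co-index $i=n-j$, so that $i$ ranges over the integers in $[n^{\gamma},n\delta_n]$; if this range is empty the sum is empty and there is nothing to prove, so we may assume $n\delta_n\ge n^{\gamma}$, and the number $N_n$ of summands then satisfies $N_n\le n\delta_n$. The hypothesis gives $\log s_j\ge\log c+\tfrac12\log\frac{n-j}{n}$, hence
$$
\frac1n\sum_{n-n\delta_n\le j\le n-n^{\gamma}}\log s_j\ \ge\ \frac{N_n}{n}\log c+\frac1{2n}\sum_{n^{\gamma}\le i\le n\delta_n}\log\frac in .
$$
The first term is at most $\delta_n|\log c|$ in absolute value and tends to $0$. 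The one place where care is needed is the second term: estimating each $\log(i/n)$ crudely by $(1-\gamma)\log n$ would only give a contribution of order $\delta_n\log n$, which need not vanish, so instead one compares the sum with an integral. Since $t\mapsto\log(t/n)$ is monotone, $\frac1{2n}\sum_{n^{\gamma}\le i\le n\delta_n}\log\frac in$ differs from $\frac1{2n}\int_{n^{\gamma}}^{n\delta_n}\log\frac tn\,dt$ by $O\!\left(\tfrac{\log n}{n}\right)$, and after the substitution $s=t/n$ this integral equals $\frac12\bigl[s\log s-s\bigr]_{n^{\gamma-1}}^{\delta_n}$, whose value tends to $0$ because both endpoints $\delta_n\to0$ and $n^{\gamma-1}\to0$ (using $\gamma<1$) and $x\log x-x\to0$. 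This gives $\liminf_{n\to\infty}\frac1n\sum_{n-n\delta_n\le j\le n-n^{\gamma}}\log s_j\ge0$.

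For the reverse inequality, the number of summands divided by $n$ equals $\delta_n-n^{\gamma-1}\to0$, so as soon as $\log s_j\le C\log n$ uniformly over the range — which is the situation in each application of the Lemma, where the $s_j$ are singular values of a matrix whose operator norm is bounded by a fixed power of $n$ — we obtain $\frac1n\sum_{n-n\delta_n\le j\le n-n^{\gamma}}\log s_j\le O(\delta_n\log n)=o(1)$. Combining the two estimates proves the Lemma; apart from the integral comparison in the middle step everything is bookkeeping.
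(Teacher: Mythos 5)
Your lower bound is correct and is essentially the paper's argument in different clothing: the paper sums $\ln\frac{n-j}{n}$ directly, evaluates the resulting factorials by Stirling's formula, and arrives at exactly the quantity $\delta_n|\ln\delta_n|+(1-\gamma)n^{\gamma-1}\ln n\to0$ that your integral comparison produces; the two computations are interchangeable. You are also right that the statement needs some upper control on the $s_j$ to be literally true --- the paper supplies it by declaring ``without loss of generality $0<s_j\le1$'', which makes the upper direction trivial since then $\ln s_j\le0$.

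The gap is in your substitute for that normalization. From $\log s_j\le C\log n$ you get $\frac1n\sum\log s_j\le C(\delta_n-n^{\gamma-1})\log n$, and you then assert $O(\delta_n\log n)=o(1)$. That is false for a general sequence $\delta_n\to0$: take $\delta_n=1/\log n$ and the bound is only $O(1)$; take $\delta_n=1/\log\log n$ and it diverges. So a polynomial bound on the operator norm is not enough for the easy direction. What actually saves it in the applications is stronger: the indices in play satisfy $j\ge n-n\delta_n$, and since the empirical singular value distribution of $\mathbf W-z\mathbf I$ converges to a compactly supported limit, all but $o(n)$ of the singular values lie below a fixed constant $R$ (this is the content of Lemma \ref{compact}); hence $s_j\le R$ for every $j$ in the range, and the upper estimate becomes $\frac{N_n}{n}\ln R\le\delta_n\ln R\to0$. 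Equivalently, one can simply read the lemma with the paper's normalization $s_j\le1$ built in. With that repair your proof is complete.
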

\begin{proof}
 Without loss of generality we may assume that $0<s_j\le 1$.
By the conditions of {Lemma \ref{vot}}, we have
\begin{equation}
 0\ge\frac1n\sum_{n-n\delta_n\le j\le n-n^{\gamma}}\ln s_j\ge\frac1n\sum_{n-n\delta_n\le j\le n-n^{\gamma}}
\ln \{\frac{n-j}n\}=A.
\end{equation}
After summation  and using Stirling's formula, we get
\begin{align}
 |A|&\le \frac1n\ln\{\frac{[n-n\delta_n]!}{[n-n^{\gamma}]!n^{n\delta_n-n^{\gamma}}}\}\notag\\&\le
\delta_n|\ln\delta_n|+(1-\gamma)n^{\gamma-1}\ln n\to 0, \quad\text{as}\quad n\to\infty.
\end{align}
This  proves  Lemma \ref{vot}.

\end{proof}

It remains to prove inequality (\ref{prod3}).
 This result was proved by Tao and Vu in \cite{TV:2010}
(see inequality (8.4) in \cite{TV:2010}).
It represents the  crucial result in their proof of the 
circular law assuming a  second moment only.
For completeness  we repeat this proof here. We start from the following
\begin{prop}\label{Tao}
 Let $1\le d\le n-n^{\gamma}$ with $\frac{8}{15}<\gamma<1$.
and $0<c<1$, and $\mathbb H$ be a (deterministic) $d$-dimensional subspace of $\mathbb C^n$. Let $X$ be a row of
 $\mathbf A_n:=\mathbf X+\mathbf M_n$. Then 
\begin{equation}\label{dist100}
 \Pr\{\text{\rm dist}(X,\mathbb H)\le c\sqrt{n-d}\}=O(\exp\{-n^{\frac{\gamma}8}\}),
\end{equation}
where $\text{\rm dist}(X,\mathbb H)$ denotes the Euclidean distance between
a vector $X$ and a subspace
$\mathbb H$ in $\mathbb C^n$. 
\end{prop}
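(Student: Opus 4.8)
The plan is to follow the distance-bound strategy of Tao and Vu \cite{TV:2010}, adapted to the shifted matrix $\mathbf A_n=\mathbf X+\mathbf M_n$ with a deterministic shift $\mathbf M_n$ of polynomially bounded norm. Fix a $d$-dimensional subspace $\mathbb H\subset\mathbb C^n$ and write $X=(X_1+m_1,\ldots,X_n+m_n)$ for the relevant row of $\mathbf A_n$, where $(X_1,\ldots,X_n)$ are the (truncated, centered, unit-variance) independent entries of the corresponding row of $\mathbf X$ and $(m_1,\ldots,m_n)$ is the fixed row of $\mathbf M_n$. Let $P$ denote the orthogonal projection onto $\mathbb H^\perp$, which has rank $n-d\ge n^{\gamma}$. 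Then $\mathrm{dist}(X,\mathbb H)=\|P X\|_2$, and the claim reduces to showing that $\|P(X'+m)\|_2$, where $X'$ is the random part, is at least $c\sqrt{n-d}$ except on an event of probability $O(\exp\{-n^{\gamma/8}\})$.

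First I would reduce to a concentration statement for the quadratic form $\|PX'\|_2^2=\sum_{j,k}\overline{X_j'}P_{jk}X_k'$ around its mean $\E\|PX'\|_2^2=\mathrm{Tr}\,P=n-d$, using the fact that the truncation (\ref{conditions}) bounds the entries by $c\tau_n\sqrt n$ while they keep unit variance. Here the key tool is a Talagrand-type concentration inequality for convex Lipschitz functions of independent bounded random variables (the function $X'\mapsto\|PX'\|_2$ is $1$-Lipschitz and convex): this gives $\Pr\{|\,\|PX'\|_2-\E\|PX'\|_2\,|\ge t\}\le C\exp\{-ct^2/(\tau_n^2 n)\}$, and since $\E\|PX'\|_2$ is comparable to $\sqrt{n-d}\ge n^{\gamma/2}$, taking $t$ a small constant multiple of $\sqrt{n-d}$ yields a bound of order $\exp\{-c(n-d)/(\tau_n^2 n)\}$. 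Because $\tau_n\to0$ this is at most $\exp\{-n^{\gamma}/(c'\tau_n^2)\}\le\exp\{-n^{\gamma/8}\}$ for $n$ large, which is the desired rate; the exponent $\gamma/8$ (rather than $\gamma$) gives room to absorb the $\tau_n^{-2}$ factor and is why the hypothesis $\gamma>8/15$ appears when this is later combined with the net/union-bound arguments. The deterministic shift is handled by translation: $\|P(X'+m)\|_2\ge\big|\,\|PX'\|_2-\|Pm\|_2\,\big|$, but more efficiently one notes that $\|P(X'+m)\|_2$ is still a convex $1$-Lipschitz function of $X'$, so the same Talagrand bound applies directly with mean $\E\|P(X'+m)\|_2\ge\sqrt{\E\|P(X'+m)\|_2^2}\cdot(1-o(1))$ once we check $\E\|P(X'+m)\|_2^2=\mathrm{Tr}\,P+\|Pm\|_2^2\ge n-d$; thus the shift only helps, and the lower bound $c\sqrt{n-d}$ survives unchanged.

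The main obstacle is establishing that the mean $\E\|P(X'+m)\|_2$ is genuinely of order $\sqrt{n-d}$ and not much smaller — i.e., controlling the gap between $\E\|PX'\|_2$ and $(\E\|PX'\|_2^2)^{1/2}=\sqrt{n-d}$ — since Talagrand's inequality only concentrates around the median/mean, and a small mean would make the deviation $t\asymp\sqrt{n-d}$ useless. This requires a separate lower bound on $\E\|PX'\|_2$, which can itself be obtained from a weak (e.g. second-moment/Paley–Zygmund) argument combined with a preliminary crude concentration estimate, exactly as in \cite{TV:2010}; one shows $\Pr\{\|PX'\|_2^2\le\tfrac12(n-d)\}$ is already small, hence the median is $\ge\sqrt{(n-d)/2}$, and then upgrades this to the exponential rate via Talagrand. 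A secondary technical point is verifying that the truncation scale $\tau_n\sqrt n$ is compatible: we need $(n-d)/(\tau_n^2 n)\ge n^{\gamma/8}$, i.e. $\tau_n^2\le n^{\gamma}/(n\cdot n^{\gamma/8})=n^{-1+7\gamma/8}$, which holds since $\tau_n\to0$ and $\gamma<1$ forces the right-hand side to stay bounded below by a fixed negative power only when $\gamma>8/15$; this is precisely the role of that hypothesis, and I would state it explicitly at the start of the argument.
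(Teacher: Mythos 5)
Your overall architecture (reduce the deterministic shift by enlarging the subspace, apply a Maurey/Talagrand-type concentration inequality for the convex $1$-Lipschitz function $X'\mapsto\mathrm{dist}(X',\mathbb H)$, and separately show $\E\,\mathrm{dist}(\widetilde X,\mathbb H)^2=(1-o(1))(n-d)$ to pin down the mean) matches the proof in the paper. But there is a genuine gap in the step where you feed the truncation level into the concentration inequality. You use the global truncation $|X_{jk}|\le c\tau_n\sqrt n$ from (\ref{conditions}), which makes each coordinate have diameter of order $\tau_n\sqrt n$ and hence gives the tail bound $\exp\{-ct^2/(\tau_n^2 n)\}$. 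With $t\asymp\sqrt{n-d}$ this exponent is $(n-d)/(\tau_n^2 n)$, and in the critical regime $n-d\asymp n^{\gamma}$ it equals $n^{\gamma-1}/\tau_n^2$, which tends to $0$ (not to $+\infty$) unless $\tau_n$ decays polynomially fast. The hypothesis is only $\tau_n\to0$ (it is dictated by the Lindeberg rate and may decay arbitrarily slowly, e.g.\ like $1/\log n$), so your requirement $\tau_n^2\le n^{-1+7\gamma/8}$ is simply not available, and the concentration bound you obtain is vacuous. Your closing sentence tying $\gamma>8/15$ to "absorbing the $\tau_n^{-2}$ factor" is therefore also not the right explanation.

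The missing idea is a \emph{second, much finer truncation} at level $n^{(1-\gamma)/2}$, which is what the paper (following Tao--Vu) does. One first shows, by a Chernoff bound on the Bernoulli indicators $\xi_j=I\{|X_j|\ge n^{(1-\gamma)/2}\}$ (whose success probability is at most $n^{-(1-\gamma)}$ by Chebyshev), that with probability $1-\exp\{-n^{\gamma}/8\}$ at most $2n^{\gamma}$ coordinates exceed this level; these few coordinates are then discarded via an orthogonal projection $\pi$, using $\mathrm{dist}(X,\mathbb H)\ge\mathrm{dist}(\pi(X),\pi(\mathbb H))$, which costs only $O(n^{\gamma})$ dimensions. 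Only then is Maurey's inequality applied, to the remaining coordinates conditioned to be bounded by $n^{(1-\gamma)/2}$; the deviation parameter becomes $r\,n^{(1-\gamma)/2}$ with $r\asymp\sqrt{n-d}/n^{(1-\gamma)/2}$, so $r^2\gtrsim n^{2\gamma-1}$, and the condition $2\gamma-1>\gamma/8$, i.e.\ $\gamma>8/15$, is exactly what makes this beat the target rate $\exp\{-n^{\gamma/8}\}$. Without this intermediate truncation-and-projection step your argument does not close.
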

\begin{proof} It was proved by Tao and Vu in \cite{TV:2010} (see Proposition 5.1).
Here we sketch their proof. As shown in \cite{TV:2010} we may reduce the problem to the case that $\E X=0$.
For this it is enough to consider vectors $X'$ and $v$  such that $X=X'+v$ and $\E X'=0$.
Instead of the subspace $\mathbb H$ we may consider subspace $\mathbb H'=\text{\rm span}(\mathbb H,v)$ and note that
\begin{equation}
 \text{\rm dist}(X,\mathbb H)\ge \text{\rm dist}(X',\mathbb H').
\end{equation}
The claim follows now from a  corresponding result for random vectors with mean zero.
In what follows we assume that $\E X=0$. 
We reduce the problem to vectors with  bounded coordinates.
Let
$\xi_j=I\{|X_j|\ge n^{\frac{1-\gamma}2}\}$, where $X_j$ denotes the $j$-th
coordinate of a vector $X$.
Note that $p_n:=\E\xi_j\le n^{-(1-\gamma)}$.
Applying Chebyshev's  inequality, we get, for any $h>0$
\begin{equation}
 \Pr\{\sum_{j=1}^n\xi_j\ge 2{n^{\gamma}}\}\le \exp\{-hn^{\gamma}\}\exp\{np_n(\text{\rm e}^h-1-h)\}.
\end{equation}

Choosing $h=\frac14$, we obtain
\begin{equation}\label{prod7}
 \Pr\{\sum_{j=1}^n\xi_j\ge 2{n^{\gamma}}\}\le\exp\{-\frac{n^{\gamma}}8\}.
\end{equation}
Let $J\subset\{1,\ldots,n\}$ and $E_J:=\{\prod_{j\in J}(1-\xi_j)\prod_{j\notin J}\xi_j=1\}$.
Inequality (\ref{prod7}) implies
\begin{equation}
 \Pr\{\bigcup_{J:|J|\ge n-2n^{\gamma}}E_J\}\ge 1-\exp\{-\frac{n^{\gamma}}8\}.
\end{equation}
Let $J$ with $|J|\ge n-2n^{\gamma}$ be fixed. Without loss of generality we may assume that $J=1,\ldots,n'$ 
with some $n-2n^{\gamma}\le n'\le n$. It is now suffices to prove that
\begin{equation}\label{dist1}
 \Pr\{\text{\rm dist}(X,\mathbb H)\le c\sqrt{n-d}| E_J\}=O(\exp\{-\frac {n^{\gamma}}8\}).
\end{equation}
Let $\pi$ denote the orthogonal projection $\pi:\mathbb C^n\rightarrow\mathbb C^{n'}$. We note that
\begin{equation}\label{dist9}
 \text{\rm dist}(X,\mathbb H)\ge\text{\rm dist}(\pi(X),\pi(\mathbb H)).
\end{equation}

Let $\widetilde x$ be a random variable $x$ conditioned on 
 the event $|x|\le n^{1-\gamma}$ and let
$\widetilde X=(\widetilde x_1,\ldots, \widetilde x_n)$. The relation (\ref{dist1}) will follow now from 
\begin{equation}
 \Pr\{\text{\rm dist}(\widetilde X',\mathbb H')\le c\sqrt{n-d}\,\big| |x_j|\le n^{1-\gamma}, j\notin J\}
=O(\exp\{-\frac {n^{\gamma}}8\}),
\end{equation}
where $\mathbb H'=\pi(\mathbb H)$ and $\widetilde X'=\pi(\widetilde X)$.
We may represent the vector $\widetilde X$as  $\widetilde X=\widetilde X'+v$, where $v=\E\widetilde X$ and $\E\widetilde X'=0$. 
We reduce the claim to the bound
\begin{equation}\label{dist5}
 \Pr\{\text{\rm dist}(\widetilde X',\mathbb H'')\le c\sqrt{n-d}\,\big| |x_j|\le n^{1-\gamma}, j\notin J\}=
O(\exp\{-\frac {n^{\gamma}}8\}),
\end{equation}
where $\mathbb H''=\text{\rm span}(v,\mathbb H')$.
In the what follows we shall omit the  symbol $'$ in the notations.
 To prove (\ref{dist5}) we shall apply the following result of Maurey.
Let $\mathbb X$ denote a normed space and $f$ denote a convex function on $\mathbb X$.
Define the functional $Q$ as follows
\begin{equation}
 Qf(x):=\inf_{y\in \mathbb X}[f(y)+\frac{\|x-y\|^2}4].
\end{equation}
\begin{defn}
 We say that a measure $\mu$ satisfies the convex property $(\tau)$ if for any convex function $f$ on $\mathbb X$
\begin{equation}
 \int_{\mathbb X}\exp\{Qf\}d\mu\int_{\mathbb X}\exp\{-f\}d\mu\le 1.
\end{equation}
 
\end{defn}

We reformulate the following result of Maurey (see \cite{Maurey:1991}, Theorem 3)

\begin{thm}\label{maurey}Let $(\mathbb X_i)$ be a family of normed spaces; for each $i$, 
let $\mu_i$ be a probability measure with diameter $\le 1$ on $\mathbb X_i$,  
for $x\in\mathbb X_i$. If $\mu$ is the product of a family $(\mu_i)$, then $\mu$ satisfies the convex property $(\tau)$.

\end{thm}

As corollary of Theorem \ref{maurey} we get
\begin{cor}\label{talagrand}
  Let $\mu_i$ be a probability measure with diameter $\le 1$ on $\mathbb X$, $i=1,\ldots,n$. Let $g$ denote a 
convex $1$-Lipshitz function on $\mathbb X^n$. Let $M(g)$ denote a median of $g$.
If $\mu$ is the product of the family $(\mu_i)$, then 
\begin{equation}
 \mu\{|g-M(g)|\ge h\}\le 4\exp\{-\frac {h^2}4\}.
\end{equation}

\end{cor}

Applying Corollary \ref{talagrand} to $\mu_i$, being  the distribution of $\widetilde x_i$, we get
\begin{equation}\label{dist10}
 \Pr\left\{|\text{\rm dist}(\widetilde X,\mathbb H)-M(\text{\rm dist}(\widetilde X,\mathbb H))|
\ge rn^{\frac{1-\gamma}2}\right\}\le 4\exp\{-r^2/16\}.
\end{equation}
The last inequality implies that there exists a constant $C>0$ such that
\begin{equation}\label{dist13}
 |\E\text{\rm dist}(\widetilde X,\mathbb H)-M(\text{\rm dist}(\widetilde X,\mathbb H))|\le Cn^{\frac{1-\gamma}2},
\end{equation}
and
\begin{equation}\label{dist14}
 \E\text{\rm dist}(\widetilde X,\mathbb H)\ge \sqrt{\E(\text{\rm dist}(\widetilde X,\mathbb H))^2}-
Cn^{\frac{1-\gamma}2}.
\end{equation}
By Lemma 5.3 in \cite{TV:2010}
\begin{equation}\label{dist15}
 \E(\text{\rm dist}(\widetilde X,\mathbb H))^2=(1-o(1))(n-d).
\end{equation}
Since $n-d\ge n^{\gamma}$ the inequalities (\ref{dist13}), (\ref{dist14}) and (\ref{dist15}) together imply 
(\ref{dist100}). Thus   Proposition \ref{Tao} is proved.
\end{proof}
Now we prove (\ref{prod3}). We repeat the proof of Tao and Vu \cite{TV:2010}, inequality (8.4).
Fix $j$. Let $\mathbf A_n=\mathbf X^{(m)}-z\mathbf M_n$ and let $\mathbf A_n'$
denote a matrix formed  by the  first $n-k$ rows of 
$\mathbf A_n$ with $k=j/2$. Let $\sigma_l'$, $1\le l\le n-k$,  be singular
values of $\mathbf A_n'$ (in decreasing order). By the interlacing property
and re-normalizing we get 
\begin{equation}
 \sigma_{n-j}\ge \frac1{\sqrt n}\sigma_{n-j}'.
\end{equation}
By Lemma A.4 in \cite{TV:2010}
\begin{equation}
 T:={\sigma'_1}^{-2}+\cdots+{\sigma'_{n-k}}^{-2}={\text{\rm dist}}_1^{-2}+\cdots+{\text{\rm dist}}^{-2}_{n-k}.
\end{equation}
Note that
\begin{equation}
 T\ge (j-k){\sigma'}_{n-j}^{-2}=\frac j2{\sigma'}_{n-j}^{-2}.
\end{equation}
Applying Proposition \ref{Tao}, we get that with probability $1-\exp\{-n^{\gamma}\}$
\begin{equation}
 T\le \frac{n}j.
\end{equation}
Combining the last inequalities, we get (\ref{prod3}).
\begin{lem}\label{spec}
 Under the conditions of Theorem \ref{main} there exists a constant $C$ such that for any   $k\le n(1-
C\Delta_n^{\frac1{m+1}}(z))$,
\begin{equation}
 \Pr\{s_k\le \Delta_n(z)\}\le C\Delta_n^{\frac1{m+1}}(z).
\end{equation}

\end{lem}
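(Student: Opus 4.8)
The goal is to control the number of singular values $s_k = s_k(\mathbf W - z\mathbf I)$ that lie below the threshold $\Delta_n(z)$. The plan is to combine the lower bound (\ref{minim}) on the product of singular values — which lets us convert a statement about the small singular values of $\mathbf W(z)$ into statements about the small singular values of the factors $\mathbf X^{(\nu)}$ and of the shifted first factor $\mathbf X^{(1)} - z(\mathbf W^{(1)})^{-1}$ — with the counting estimate provided by inequality (\ref{prod3}) and Lemma \ref{vot}, together with the convergence of the spectral distribution of $\mathbf V(z)$ established in Theorem \ref{respect} and its regularity near the origin (Corollary \ref{st1*}). First I would recall that, by the definition of $\Delta_n(z)$ and Corollary \ref{st1*}, the limiting singular measure $\nu(\cdot,z)$ has a density blowing up only like $|x|^{-(m-1)/(m+1)}$ at the origin, so its distribution function satisfies $\widetilde F(x,z) - \widetilde F(0,z) = O(x^{2/(m+1)})$ near $0$; hence the number of singular values of $\mathbf W(z)$ below a level $t$ is, up to the approximation error $\Delta_n(z)$, at most $C n\, t^{2/(m+1)} + C n \Delta_n(z)$.

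The key steps, in order, are as follows. Step 1: use (\ref{prod1})–(\ref{prod3}) to establish the deterministic bound $s_k(\mathbf W(z)) \ge c\,\bigl(\tfrac{n-k}{n}\bigr)^{m/2}$ for $1 \le k \le n - n^{\gamma}$, valid on an event of probability $1 - \exp\{-n^{\gamma/8}\}$ (here one applies (\ref{prod3}) to each factor $\mathbf X^{(\nu)}$ and to $\mathbf X^{(m)} + \mathbf M_n$, takes the product over $\nu$, and uses that $\|\mathbf M_n\|_2 \le C n^{B}$ with overwhelming probability). Step 2: this bound already forces $s_k > \Delta_n(z)$ once $\bigl(\tfrac{n-k}{n}\bigr)^{m/2} > C^{-1}\Delta_n(z)$, i.e. once $n - k > c\, n\, \Delta_n^{2/m}(z)$; so only indices $k$ with $n - k \le c\, n\, \Delta_n^{2/m}(z)$ are dangerous, and since $2/m \ge 2/(m+1)$ this is consistent with the claimed range $k \le n(1 - C\Delta_n^{1/(m+1)}(z))$. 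Step 3: for the remaining regime, combine the upper bound on the counting function of $\widetilde F_n(\cdot,z)$ (from Theorem \ref{respect} plus Corollary \ref{st1*}) to conclude that $\#\{k : s_k \le \Delta_n(z)\} \le C n \Delta_n^{2/(m+1)}(z) + C n \Delta_n(z) \le C n \Delta_n^{2/(m+1)}(z)$, and hence, for any single $k$ with $k \le n(1 - C\Delta_n^{1/(m+1)}(z))$, that $k$ lies above the count of small singular values, so $\Pr\{s_k \le \Delta_n(z)\}$ is controlled by the probability that the approximation $\Delta_n(z)$ fails to capture the true position of the $k$-th singular value; a Markov/averaging argument over the expected empirical distribution then yields the bound $\Pr\{s_k \le \Delta_n(z)\} \le C\Delta_n^{1/(m+1)}(z)$.

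The main obstacle I expect is Step 3: passing from the convergence of the \emph{expected} (integrated) spectral distribution $\widetilde F_n(x,z) \to \widetilde F(x,z)$ with rate $\Delta_n(z)$, which is an $L^\infty$ statement about a single averaged distribution function, to a \emph{probabilistic} bound on the individual random singular value $s_k$ uniformly over a long range of indices $k$. The clean way to handle this is to write $\Pr\{s_k \le \Delta_n(z)\} \le \Pr\{\widetilde{\mathcal F}_n(\Delta_n(z),z) \ge \tfrac{k}{n}\}$, bound $\E\,\widetilde{\mathcal F}_n(\Delta_n(z),z) = \widetilde F_n(\Delta_n(z),z) \le \widetilde F(\Delta_n(z),z) + \Delta_n(z) \le C\Delta_n^{2/(m+1)}(z)$ using the square-root-type modulus of continuity of $\widetilde F$ at $0$ from Corollary \ref{st1*}, and then apply Markov's inequality together with the elementary fact that $1 - \tfrac{k}{n} \ge C\Delta_n^{1/(m+1)}(z)$; the quantitative exponent $1/(m+1)$ in the conclusion comes precisely from balancing $\Delta_n^{2/(m+1)}(z)$ against $1 - k/n$. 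The deterministic lower bound of Steps 1–2 is needed only to rule out the regime $k$ extremely close to $n$, where the density of $\nu(\cdot,z)$ argument is too crude.
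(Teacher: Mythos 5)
Your Step 3 is precisely the paper's proof: the event $\{s_k\le\Delta_n(z)\}$ forces $\mathcal F_n(\Delta_n(z),z)\ge (n-k)/n$, Markov's inequality bounds this by $nF_n(\Delta_n(z),z)/(n-k)$, the definition of $\Delta_n(z)$ gives $F_n\le F+\Delta_n$, the system (\ref{system0}) gives $F(\Delta_n(z),z)\le C\Delta_n^{2/(m+1)}(z)$, and the hypothesis $n-k\ge Cn\Delta_n^{1/(m+1)}(z)$ finishes the balance. Your Steps 1--2 (the deterministic lower bound via (\ref{prod1})--(\ref{prod3})) are superfluous here, since the dangerous range $k$ near $n$ is already excluded by the hypothesis of the lemma, but they do no harm; the argument is essentially the same as the paper's.
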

\begin{proof}
 Recall that $F_n(x,z)=\E\mathcal F_n(x,z)$ denotes the mean of the spectral distribution function $\mathcal F_n(x,z)$  
of the matrix
$\mathbf H(z)$ and that $F(x,z)=\lim_{n\to\infty}F_n(x,z)$. According to Theorem \ref{respect}, the Stieltjes transform 
of the distribution function $F_n(x,z)$ satisfies the system of algebraic equations (\ref{system0}) and 
\begin{equation}
 \Delta_n(z)=\sup_x|F_n(x,z)-F(x,z)|\to 0\quad\text{as}\quad n\to\infty.
\end{equation}
We may write, for any $k=1,\ldots,n$,
\begin{equation}
 \Pr\{s_k\le \Delta_n(z)\}\le \Pr\{\mathcal F_n(s_k,z)\le\mathcal F_n(\Delta_n(z)\}\le
 \Pr\{\frac{n-k}n\le \mathcal F_n(\Delta_n(z)\}.
\end{equation}
Applying  Chebyshev's  inequality, we obtain
\begin{equation}
 \Pr\{s_k\le \Delta_n(z)\}\le \frac{n\E\mathcal F_n(\Delta_n(z))}{n-k}\le
\frac{n(F(\Delta_n(z),z)+\Delta_n(z)}{n-k}.
\end{equation}
It is straightforward to check that from the system of equations  (\ref{system0}) it follows
\begin{equation}
 F(\Delta_n(z),z)\le C\Delta_n^{\frac2{m+1}}(z).
\end{equation}
The last inequality concludes the proof of Lemma \ref{spec}.

\end{proof}
\begin{lem}\label{compact}
Let $\Delta_n(z):=\sup_x|F_n(x,z)-F(x,z)|$. Then there exists some absolute positive constant $R$ such that
\begin{equation}\label{rate}
 \Pr\{|\lambda_{k_1}|>R\}\le C\sqrt{\Delta_n(z)},
\end{equation}
where $k_1:=\big[\Delta_n^{\frac14}(z)n\big]$.
\end{lem}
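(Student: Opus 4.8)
The plan is to bound the number $N_R := \#\{k : |\lambda_k(\mathbf W)| > R\}$ of eigenvalues of $\mathbf W$ lying outside a fixed disc, and to show that $\E N_R$ is small enough that Markov's inequality against the threshold $k_1 \asymp \Delta_n^{1/4}(z)\,n$ yields the claim. Since the eigenvalues are ordered so that $|\lambda_1|\ge|\lambda_2|\ge\cdots$, the event $\{|\lambda_{k_1}|>R\}$ coincides with $\{N_R\ge k_1\}$, so it suffices to establish $\E N_R \le C\,n\,\Delta_n(z)\log\frac{e}{\Delta_n(z)}$ for a suitably chosen fixed $R$.

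First I pass from the (non-Hermitian) eigenvalues $\lambda_k(\mathbf W)$ to the singular values $s_k(\mathbf W)$ via Weyl's majorization inequalities $\prod_{k\le j}|\lambda_k(\mathbf W)|\le\prod_{k\le j}s_k(\mathbf W)$, $1\le j\le n$ — a genuine necessity, as no pointwise bound $|\lambda_k|\le s_k$ holds. Applying the convex nondecreasing ramp $g(t):=(\log 2)^{-1}(t-\log(R/2))^+$, which satisfies $g(t)\ge\mathbb I\{t>\log R\}$, together with Karamata's inequality gives
\begin{equation}\notag
 N_R \le \sum_k g(\log|\lambda_k(\mathbf W)|) \le \sum_k g(\log s_k(\mathbf W)) = \frac1{\log 2}\int_{R/2}^{\infty}\frac{N(t)}{t}\,dt, \qquad N(t):=\#\{k: s_k(\mathbf W)>t\},
\end{equation}
the last equality being the layer-cake representation.

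Next I control $\E N(t)$ in two regimes. On the one hand $N(t)\le t^{-2}\|\mathbf W\|_2^2$, and $\E\|\mathbf W\|_2^2=\sum_{j,k}\E|\mathbf W_{jk}|^2\le Cn$ since the entries of $\mathbf W=\prod_\nu\mathbf X^{(\nu)}$ have second moment $O(1/n)$; hence $\E N(t)\le Cn/t^2$ for every $t$. On the other hand, writing $\mathbf W=\mathbf W(z)+z\mathbf I$ and using $s_k(\mathbf W)\le s_k(\mathbf W(z))+|z|$, for $t>C_m+2|z|$ we get $N(t)\le\#\{k: s_k(\mathbf W(z))>t-|z|\}$; since $t-|z|$ then lies beyond the support $[0,C_m+|z|]$ of the limiting singular value law $F(\cdot,z)$ of $\mathbf W(z)$ (Corollary~\ref{st1*}), the definition of $\Delta_n(z)$ gives $\E N(t)\le Cn\Delta_n(z)$. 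Now fix $R$ with $R/2>C_m+2|z|+1$, so that $R$ depends only on $m$ and an a priori bound on $|z|$; both estimates hold on $[R/2,\infty)$, and splitting the integral at $t=\Delta_n^{-1/2}(z)$ yields
\begin{equation}\notag
 \E N_R \le \frac1{\log 2}\Bigl(\int_{R/2}^{\Delta_n^{-1/2}(z)}\frac{Cn\Delta_n(z)}{t}\,dt + \int_{\Delta_n^{-1/2}(z)}^{\infty}\frac{Cn}{t^3}\,dt\Bigr) \le Cn\Delta_n(z)\log\frac{e}{\Delta_n(z)}.
\end{equation}

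Finally, Markov's inequality together with $k_1=[\Delta_n^{1/4}(z)n]\ge\tfrac12\Delta_n^{1/4}(z)n$ (valid whenever $k_1\ge1$; the case $k_1=0$ is vacuous) gives
\begin{equation}\notag
 \Pr\{|\lambda_{k_1}|>R\} = \Pr\{N_R\ge k_1\} \le \frac{\E N_R}{k_1} \le C\Delta_n^{3/4}(z)\log\frac{e}{\Delta_n(z)} \le C\sqrt{\Delta_n(z)},
\end{equation}
using that $x\mapsto x^{1/4}\log(e/x)$ is bounded on $(0,1]$. The main obstacle is precisely this balancing in the bound for $\E N_R$: one cannot hope for $\E N_R=O(n\Delta_n(z))$ because the handful of extreme eigenvalues inevitably contributes a factor $\log(1/\Delta_n(z))$, yet $\E N_R=O(n\Delta_n(z)\log(1/\Delta_n(z)))$ is exactly what survives division by $k_1\asymp\Delta_n^{1/4}(z)n$ and leaves the exponent $\tfrac12$. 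The two supporting ingredients — routing eigenvalues through singular values by majorization, and recovering the factor $\Delta_n(z)$ from the shift $\mathbf W\mapsto\mathbf W(z)$ combined with the compactly supported limit law of Corollary~\ref{st1*} — are what make the estimate close.
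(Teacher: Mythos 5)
Your proof is correct, and it reaches the conclusion by a genuinely different route from the paper's. Both arguments share the same three ingredients: Weyl's majorization $\prod_{k\le j}|\lambda_k|\le\prod_{k\le j}s_k$ to pass from eigenvalues to singular values, the compact support of the limit law $F(\cdot,z)$ together with the definition of $\Delta_n(z)$ to control the bulk of the large singular values, and a second-moment (Frobenius norm) bound to tame the extreme ones. But the mechanics differ. The paper works with the geometric mean directly: from $\{|\lambda_{k_1}|>R\}\subset\{\frac1{k_1}\sum_{\nu\le k_1}\ln s_\nu>\ln R\}$ it introduces auxiliary indices $k_0=[\Delta_n^{1/2}n]$ and $k_2$, applies Chebyshev's inequality to the event $\{s_{k_0}>R\}$ (getting the $\Delta_n^{1/2}$ that names the final rate), and bounds the residual contribution of the top singular values by $\frac n{k_1}\int_{\Delta_n^{-1}}^{\infty}\ln x\,dF_n(x,z)$ using $\int x^2\,dF_n<\infty$. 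You instead linearize the problem: you identify the event with $\{N_R\ge k_1\}$, convert $N_R$ into a sum of a convex nondecreasing ramp of $\log s_k$ via weak majorization, and then need only a first-moment bound $\E N_R\le Cn\Delta_n\log(e/\Delta_n)$ followed by Markov. Your version buys a cleaner bookkeeping (no $k_0$, $k_2$, $R_1$) and makes transparent where the exponent $\frac12$ comes from (it is $\frac34$ minus a logarithm, absorbed since $x^{1/4}\log(e/x)$ is bounded); the paper's version keeps the estimate in probability form throughout and lands on $\Delta_n^{1/2}$ more directly via the Chebyshev step at level $k_0$. One cosmetic caveat on both sides: the radius $R$ is fixed in terms of $C_m$ and $|z|$ (you say so explicitly; the paper fixes $R$ with $F(R,z)=1$), so it is ``absolute'' only after $z$ is restricted to a compact set, which is how the lemma is used in Section \ref{proof}.
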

\begin{proof}It is straightforward to check from (\ref{system}) that the 
distribution $F(x,z)$ is compactly  supported. 
Fix $R$ such that $F(R,z)=1$. 
Let us introduce $k_0:=\big[\Delta_n^{\frac12}n\big]$. Using Chebyshev's inequality we obtain, for  $R>0$,
\begin{equation}\notag
 \Pr\{s_{k_0}>R\}\le\frac{1-\E F_n(R)}{k_0/n}\le \Delta_n^{\frac12}.
\end{equation}
On the other hand, 
\begin{equation}\notag
 \Pr\{|\lambda_{k_1}|>R\}\le\Pr\{\prod_{\nu=1}^{k_1}|\lambda_{\nu}|>R^{k_1}\}\le\Pr\{\prod_{\nu=1}^{k_1}
s_{\nu}>R^{k_1}\}\le\Pr\{\frac1{k_1}\sum_{\nu=1}^{k_1}\ln{s_{\nu}^{(m)}}>\ln{R}\}.
\end{equation}
Let $k_2=\max\{1\le j\le k_0:\ \sigma_j\ge\Delta_n^{-1}(z)\}$. If $\sigma_1\le \Delta_n^{-1}(z)$ then 
$k_2=0$.
Furthermore, for any value $R_1\ge 1$, splitting into the events $s_{k_0}>R$ and
$s_{k_0}\le R$, we get
\begin{align}
\Pr\{\frac1{k_1}\sum_{\nu=1}^{k_1}\ln{s_{\nu}}>\ln{R_1}\}&\le\Pr\{s_{k_0}>R\}\notag\\&+\Pr\{\frac{1}{k_1}
\sum_{j=k_2+1}^{k_0}\ln{s_j}+
\ln{R}>\frac12\ln{R_1}\}
+\Pr\{\frac1{k_1}\sum_{j=1}^{k_2}
\ln s_j>\frac12\ln{R_1}\}
\end{align}
Applying Chebyshev's  inequality, we get
\begin{align}\Pr\{\frac1{k_1}\sum_{\nu=1}^{k_1}\ln{s_{\nu}}>\ln{R_1}\}&\le\Pr\{s_{k_0}>R\}\notag\\&
+\Pr\{\frac{k_0}{k_1}\ln {\Delta_n^{-1}(z)}>\frac12\ln{\frac{R_1}{R^2}}\}
+\frac{n}{k_1}\int_{\Delta_n^{-1}(z)}\ln xdF_n(x,z).\notag
\end{align}
Now choose $R_1:=2R^2$. Thus, since $k_1/k_0\sim \Delta_n^{\frac14}(z)$, and $\Delta_n^{\frac14}(z)|
\ln\Delta_n(z)|\to0$, we get for sufficiently large $n$
\begin{equation}\notag
 \Pr\{|\lambda_{k_1}|>R\}\le\Delta_n^{\frac{1}{2}}+\frac{n}{k_1}\int_{\Delta_n^{-1}(z)}\ln xd\ F_n(x,z).
\end{equation}
Taking into account that the function $\frac{\ln x}{x^2}$ decreases in the interval
 $[\delta_n^{-1}(z),\infty)$, we get 
\begin{equation}
 \frac{n}{k_1}\int_{\Delta_n^{-1}(z)}^{\infty}\ln x d\ F_n(x,z)\le\frac{n\Delta_n^2(z)}{k_1}\ln{\Delta_n^{-1}(z)}
\int_0^{\infty} x^2d\ F_n(x,z)\le \Delta_n^{\frac12}(z)\ln{\Delta_n^{-1}(z)}.
\end{equation}
 Thus  the Lemma is proved.
\end{proof}

\section{Proof of the  Main Theorem}\label{proof}
In this Section we give the proof of Theorem \ref{main}. 
 For any $z\in \mathbb C$ and an absolute constant $c>0$ we introduce the set
$\Omega_n(z)=\{\omega\in\Omega:\ c/n^B\le s_n(z), \ s_1\le n,
\ |\lambda_{k_1}|\le R\ s_{k_2}\ge \Delta_n(z)\}$.
According to Lemma \ref{largeval}
\begin{equation}
\Pr\{s_1(\bold X)\ge n\}\le Cn^{-1}.\notag
\end{equation}

Due to Lemma \ref{thm1} with $\varepsilon=c$, we have
\begin{equation}
\Pr\{c/n^B\ge s_n(z)\}\le\frac{C\sqrt{\ln n}}{\sqrt {n}}+\Pr\{s_1\ge n\}.\notag
\end{equation}
According to Lemma \ref{compact}, we have
\begin{equation}
 \Pr\{|\lambda_{k_1}|\le R\}\le C\sqrt{\Delta_n}.
\end{equation}
Furthermore, in view of  Lemma \ref{spec},
\begin{equation}
\Pr\{s_k\le \Delta_n(z)\}\le C\Delta_n^{\frac1{m+1}}(z).
\end{equation}
These inequalities imply
\begin{equation}\label{truncation}
\Pr\{\Omega_n(z)^{c}\}\le C\Delta_n^{\frac1{m+1}}(z).
\end{equation}
The remaining part  of the proof of Theorem \ref{main} is similar to the 
proof of Theorem 1.1 in the paper of 
G\"otze and Tikhomirov \cite{GT:2010}. For completeness we shall repeat it here.
Let $r=r(n)$ be such that $r(n)\to 0$ as $n\to\infty$. 
A more specific choice will be made  later.
Consider the potential $U_{\mu_n}^{(r)}$.
We have
\begin{align}
U_{\mu_n}^{(r)}&=-\frac1n\E\log|\det(\bold W-z\bold I-r\xi\bold I)|\notag\\&
=-\frac1n\sum_{j=1}^n\E\log|\lambda_j^{(m)}-r\xi-z|I_{\Omega_n(z)}
-\frac1n\sum_{j=1}^n\E\log|\lambda_j^{(m)}-r\xi-z|I_{\Omega_n^{(c)}(z)}\notag\\&=
\overline U_{\mu_n}^{(r)}+\widehat U_{\mu_n}^{(r)},\notag
\end{align}
where
$I_A$ denotes an indicator function of an event $A$ and ${\Omega_n(z)}^{c}$ denotes the 
complement of $\Omega_n(z)$.
\begin{lem}\label{lem5.1}Assuming the conditions of Theorem \ref{thm1}, for $r$ such that 

$$
\ln(1/r)\,(\Delta_n^{\frac14}(z))\to\infty\quad\text{as}\quad n\to0
$$ 

we have
\begin{equation}\label{0*}
\widehat U_{\mu_n}^{(r)}\to 0,\text{  as  }n\to\infty.
\end{equation}
\end{lem}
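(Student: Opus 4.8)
The plan is to show that the contribution of the bad event $\Omega_n(z)^c$ to the regularized logarithmic potential is negligible by controlling two competing factors: the probability of $\Omega_n(z)^c$, which by \eqref{truncation} is at most $C\Delta_n^{\frac1{m+1}}(z)$, and the size of $\frac1n\sum_j \log|\lambda_j^{(m)}-r\xi-z|$ on that event, which can only be large if some eigenvalue of $\mathbf W-z\mathbf I$ (equivalently some singular value, via $\prod|\lambda_j|=\prod s_j$) is either very small or very large. First I would split $\widehat U_{\mu_n}^{(r)}$ into the part where the log is positive and the part where it is negative. For the positive part, I would use the operator norm bound $s_1(\mathbf W)\le s_1(\mathbf X^{(1)})\cdots s_1(\mathbf X^{(m)})$ together with $\Pr\{s_1(\mathbf X^{(\nu)})\ge n\}\le Cn^{-1}$ (Lemma \ref{largeval}) so that $|\lambda_j^{(m)}-r\xi-z|\le n^{m}+|z|+r$ with overwhelming probability, making $\log^+|\lambda_j-r\xi-z|=O(\log n)$ deterministically off a set of probability $O(n^{-1})$; multiplied by $\Pr\{\Omega_n(z)^c\}\le C\Delta_n^{\frac1{m+1}}(z)$ this gives a bound of order $\Delta_n^{\frac1{m+1}}(z)\log n\to 0$.

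The more delicate part is the negative tail, $-\frac1n\sum_j \log^-|\lambda_j^{(m)}-r\xi-z|\,I_{\Omega_n(z)^c}$, since here small values of $|\lambda_j-r\xi-z|$ can produce large logarithms. The key point is that the regularization by $r\xi$ (with $\xi$ having bounded density, as introduced in \cite{GT:2010}) forces $|\lambda_j^{(m)}-r\xi-z|\ge$ something like $r\cdot(\text{a random factor})$, so that $\log^-|\lambda_j^{(m)}-r\xi-z|\le \log(1/r)+(\text{integrable tail})$; integrating over the distribution of $\xi$ one gets that $\E\big[\log^-|\lambda_j^{(m)}-r\xi-z|\,\big|\,\lambda_j^{(m)}\big]\le C\log(1/r)$ uniformly. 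Hence
\begin{equation}\notag
|\widehat U_{\mu_n}^{(r)}|\le C\big(\log(1/r)+\log n\big)\,\Pr\{\Omega_n(z)^c\}\le C\big(\log(1/r)+\log n\big)\,\Delta_n^{\frac1{m+1}}(z),
\end{equation}
and the hypothesis $\ln(1/r)\,\Delta_n^{\frac14}(z)\to\infty$ — read in the direction that makes the statement work, i.e. $r\to0$ slowly enough that $\log(1/r)=o(\Delta_n^{-\delta}(z))$ for a suitable small $\delta$, so that $\log(1/r)\Delta_n^{\frac1{m+1}}(z)\to 0$ — together with $\log n\cdot\Delta_n^{\frac1{m+1}}(z)\to 0$ finishes the argument.

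The main obstacle I anticipate is making the bound on the negative tail genuinely uniform in $z$ and in the (random) configuration of eigenvalues on $\Omega_n(z)^c$: one cannot use the a priori control of small singular values from Lemma \ref{spec} or Lemma \ref{thm1} there, precisely because $\Omega_n(z)^c$ is where those good events fail. The resolution is that the smoothing variable $r\xi$ is independent of $\mathbf W$ and has a bounded density, so that conditionally on the matrix the quantity $\E_\xi\log^-|\lambda_j^{(m)}-z-r\xi|$ is bounded by $C(1+\log(1/r))$ regardless of where $\lambda_j^{(m)}$ sits — this is exactly the mechanism used in \cite{GT:2010} and it is robust to conditioning on the matrix. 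Once this conditional bound is in place, the estimate above is just Cauchy–Schwarz / a crude union bound against $\Pr\{\Omega_n(z)^c\}$, and \eqref{0*} follows.
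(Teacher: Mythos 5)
Your treatment of the hard part --- the negative tail --- is sound and is essentially the paper's mechanism: the smoothing variable $r\xi$ is independent of the matrix and has density $O(r^{-2})$ on a disc of radius $r$, so $\E_\xi\log^-|\lambda_j-z-r\xi|\le C(1+\log(1/r))$ uniformly in $\lambda_j$, and this survives multiplication by the indicator of $\Omega_n(z)^c$. The paper reaches the same conclusion slightly differently: it applies H\"older with exponent $1+\tau$ to separate $\bigl(\frac1n\sum_j\E|\log|\lambda_j-r\xi-z||^{1+\tau}\bigr)^{1/(1+\tau)}$ from $\bigl(\Pr\{\Omega_n^c\}\bigr)^{\tau/(1+\tau)}$ and then splits the $\xi$-integral into the regions $\{|\lambda_j-r\zeta-z|\le\varepsilon\}$, $\{\varepsilon<\cdot<1/\varepsilon\}$, $\{\cdot>1/\varepsilon\}$ with $\varepsilon=r$; your conditioning argument avoids the H\"older loss and gives the full power of $\Pr\{\Omega_n^c\}$. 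You are also right that the hypothesis as printed is a typo and must be read as $\ln(1/r)\,\Delta_n^{1/4}(z)\to0$.

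There is, however, a genuine gap in your positive-tail bound. You control $\log^+|\lambda_j-r\xi-z|$ by the operator norm ($s_1\le n^m$ off an event of probability $O(n^{-1})$), arriving at a contribution of order $\Delta_n^{1/(m+1)}(z)\log n$, and you assert this tends to $0$. But the paper establishes only $\Delta_n(z)\to0$ (via Montel's theorem in Theorem \ref{respect}), with no rate whatsoever; nothing rules out, say, $\Delta_n(z)\sim(\log\log n)^{-1}$, in which case $\Delta_n^{1/(m+1)}(z)\log n\to\infty$. Every admissible factor multiplying $\Pr\{\Omega_n^c\}$ or $\Delta_n^{1/4}(z)$ must be of size $O(|\log\Delta_n(z)|)$ or $O(\log(1/r))$ with $r$ tied to $\Delta_n(z)$, never $O(\log n)$. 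The fix is exactly the paper's bound for $J_3^{(j)}$: use $\log^+ x\le C\varepsilon^2|\log\varepsilon|\,x^2$ for $x\ge1/\varepsilon$ together with $\frac1n\sum_j\E|\lambda_j-r\xi-z|^2\le\frac1n\E\Tr\mathbf W\mathbf W^*+|z|^2+r^2=O(1)$, and take $\varepsilon=r$; the large-eigenvalue contribution is then $O(r^2|\log r|)$, with no $\log n$ anywhere, and the choice $r=\Delta_n(z)$ closes the argument.
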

\begin{proof}
By definition, we have
\begin{align}\label{1*}
\widehat U_{\mu_n}^{(r)}=-\frac1n\sum_{j=1}^n\E\log|\lambda_j^{(m)}-r\xi-z|I_{\Omega_n^{(c)}(z)}.
\end{align}
Applying Cauchy's inequality, we get, for any $\tau>0$,
\begin{align}\label{2*}
|\widehat U_{\mu_n}^{(r)}|&\le \frac1n\sum_{j=1}^n\E^{\frac1{1+\tau}}|\log|\lambda_j^{(m)}-r\xi-z||^{1+\tau}
\left(\Pr\{\Omega_n^{(c)}\}\right)^{\frac{\tau}{1+\tau}}\notag\\&\le
\left(\frac1n\sum_{j=1}^n\E|\log|\lambda_j^{(m)}-r\xi-z||^{1+\tau}\right)^{\frac1{1+\tau}}\left(
\Pr\{\Omega_n^{(c)}\}\right)^{\frac{\tau}{1+\tau}}.
\end{align}
Furthermore, since $\xi$ is uniformly distributed in the unit disc and independent of $\lambda_j$, we may write
\begin{equation}
\E\Big|\log|\lambda_j-r\xi-z|\,\Big|^{1+\tau}=\frac1{2\pi}\E\int_{|\zeta|\le1}
\Big|\log|\lambda_j^{(m)}-r\zeta-z|\,\Big|^{1+\tau}d\zeta=
\E J_1^{(j)}+\E J_2^{(j)}+\E J_3^{(j)},\notag
\end{equation}
where
\begin{align}
J_1^{(j)}&=\frac1{2\pi}\int_{|\zeta|\le 1,\ |\lambda_j^{(m)}-r\zeta-z|\le\varepsilon}|\log|
\lambda_j^{(m)}-r\zeta-z||^{1+\tau}d\zeta,\notag\\
J_2^{(j)}&=\frac1{2\pi}\int_{|\zeta|\le 1,\ \frac1{\varepsilon}>|\lambda_j^{(m)}-r\zeta-z|>
\varepsilon}|\log|\lambda_j^{(m)}-r\zeta-z||^{1+\tau}d\zeta,\notag\\
J_3^{(j)}&=\frac1{2\pi}\int_{|\zeta|\le 1,\ |\lambda_j^{(m)}-r\zeta-z|>\frac1{\varepsilon}}|
\log|\lambda_j^{(m)}-r\zeta-z||^{1+\tau}d\zeta.\notag
\end{align}
Note that
\begin{equation}
|J_2^{(j)}|\le\log\left(\frac1{\varepsilon}\right).\notag
\end{equation}
Since for any $b>0$, the function $-u^b\log u$ is not decreasing on the interval $[0,\exp\{-\frac1{b}\}]$,
we have for $0<u\le\varepsilon<\exp\{-\frac1{b}\}$,
\begin{equation}
-\log u\le \varepsilon^{b}u^{-b}\log\left(\frac1{\varepsilon}\right).\notag
\end{equation}
Using this inequality, we obtain, for $b(1+\tau)<2$,
\begin{align}\label{finish1}
|J_1^{(j)}|&\le\frac1{2\pi}\varepsilon^{b(1+\tau)}\left(\log\left(\frac1{\varepsilon}\right)\right)^{1+\tau}
\int_{|\zeta|\le 1,\ |\lambda_j^{(m)}-r\zeta-z|\le\varepsilon}|\lambda_j^{(m)}-r\zeta-z|^{-b(1+\tau)}d\zeta\\
&\le\frac1{2\pi r^2}\varepsilon^{b(1+\tau)}r^{-2}\log\left(\frac1{\varepsilon}\right)\int_{|\zeta|\le \varepsilon}
|\zeta|^{-b(1+\tau)}d\zeta\le C(\tau, b)\varepsilon^{2}r^{-2}\left(\log\left(\frac1{\varepsilon}\right)\right)^{1+\tau}.
\end{align}
If we choose $\varepsilon=r$, then we get
\begin{equation}\label{finish2}
|J_1^{(j)}|\le C(\tau, b)\left(\log\left(\frac1{r}\right)\right)^{1+\tau}.
\end{equation}
The following bound holds for $\frac1n\sum_{j=1}^n\E J_3^{(j)}$. Note that
$|\log x|^{1+\tau}\le \varepsilon^2|\log\varepsilon|^{1+\tau}x^2$ for $x\ge\frac1{\varepsilon}$ and 
sufficiently small $\varepsilon$.
Using this inequality, we obtain
\begin{align}\label{finish3}
\frac1n\sum_{j=1}^n\E J_3^{(j)}\le C(\tau)\varepsilon^2|\log\varepsilon|^{1+\tau}|\frac1n\sum_{j=1}^n 
\E |\lambda_j^{(\varepsilon)}-r\zeta-z|^2
\le C(\tau)(1+|z|^2+r^2)\varepsilon^2|\log\varepsilon|^{1+\tau}\notag\\
\le C(\tau)(2+|z|^2)r^2|\log r|^{1+\tau}|.
\end{align}

The inequalities (\ref{finish1})--(\ref{finish3}) together imply that
\begin{equation}\label{3*}
|\frac1n\sum_{j=1}^n\E|\log|\lambda_j^{(m)}-r\xi-z||^{1+\tau}|\le
C\left(\log\left(\frac1{r}\right)\right)^{1+\tau}.
\end{equation}
Furthermore, the inequalities (\ref{truncation}), (\ref{1*}), (\ref{2*}), and (\ref{3*}) together imply
\begin{equation}
|\widehat U_{\mu_n}^{(r)}|\le C\log\left(\frac1{r}\right)((\Delta_n^{\frac12}(z))
^{\frac{\tau}{1+\tau}}.\notag
\end{equation}
We choose $\tau=1$ and rewrite the last inequality as follows
\begin{equation}
|\widehat U_{\mu_n}^{(r)}|\le C\log\left(\frac1{r}\right)\Delta_n^{\frac14}(z)
\end{equation}
If we choose $r=\Delta_n(z)$ we obtain  
$\log(1/r)\Delta_n^{\frac14}(z)\to 0$, then (\ref{0*}) holds and the 
Lemma is proved.
\end{proof}
We shall investigate $\overline U_{\mu_n}^{(r)}$ now. Let $\nu_n(\cdots,z,r)=\E_{\zeta}\nu_n(\cdot,z+r\zeta)$ 
and $\nu(\cdot,z,r)=\E\nu(\cdot,z+r\zeta)$.
We may write
\begin{align}
\overline U_{\mu_n}^{(r)}&=-\frac1n\sum_{j=1}^n\E\log|\lambda_j^{(\varepsilon)}-z-r\xi|I_{\Omega_n(z)}=
-\frac1n\sum_{j=1}^n\E\log(s_j(\bold X^{(\varepsilon)}(z,r))I_{\Omega_n(z)}\notag\\
&=-\int_{n^{-B}}^{K_n+|z|}\log xd\E\overline F_n(x,z,r),
\end{align}
where $\overline F_n(\cdot,z,r)$ ($F(x,z,r)$ ) is the distribution function corresponding to the 
restriction of the measure $\nu_n(\cdot,z,r)$ ($\nu(\cdot,z,r)$) to the set $\Omega_n(z)$.
Introduce the notation
\begin{equation}
\overline U_{\mu}=-\int_{\Delta_n(z)}^{n+|z|}\log x dF(x,z,r).
\end{equation}
Integrating by parts, we get
\begin{align}
\overline U_{\mu_n}^{(r)}-\overline U_{\mu}&=-\int_{\Delta_n(z)}^{n+|z|}
\frac{\E F_n(x,z,r)-F(x,z,r)}x dx\notag\\&+
C\sup_x|\E F_n(x,z,r)-F(x,z,r)||\log (\Delta_n(z))|+\E\left\{\frac1n\sum_{j=k_2}^n\ln s_j I\{\Omega_n(z)\}\right\}.
\end{align}
This implies that
\begin{equation}\label{009}
|\overline U_{\mu_n}^{(r)}-\overline U_{\mu}|\le C|\log (\Delta_n(z))|\sup_x|\E F_n(x,z,r)-F(x,z)|.
\end{equation}
Note that, for any $r>0$, $|s_j(z)-s_j(z,r)|\le r$. This implies that
\begin{equation}
\E F_n(x-r,z)\le \E F_n(x,z,r)\le \E F_n(x+r,z).
\end{equation}
Hence, we get
\begin{equation}\label{supremum}
\sup_x|\E F_n(x,z,r)-F(x,z)|\le \sup_x|\E\mathcal F_n(x,z)-F(x,z)|+\sup_x|F(x+r,z)-F(x,z)|.
\end{equation}
Since the distribution function $F(x,z)$ has a density $p(x,z)$ which is bounded for $|z|>0$ and
$p(x,0)=O(x^{-\frac{m-1}{m+1}})$
(see Remark \ref{st1})
we obtain
\begin{equation}\label{08}
\sup_x|\E \mathcal F_n(x,z,r)-F(x,z)|\le \sup_x|\E \mathcal F_n^{(\varepsilon)}(x,z)-F(x,z)|+Cr^{\frac2{m+1}}.
\end{equation}
Choose $r=\Delta_n(z)$.
Inequalities (\ref{08}) and (\ref{supremum}) together imply
\begin{equation}\label{09}
\sup_x|\E\overline{\mathcal F}_n(x,z,r)-\overline F(x,z)|\le C\Delta_n^{\frac2{m+1}}(z).
\end{equation}
From inequalities (\ref{09}) and (\ref{009}) and  lemma \ref{vot} it follows that
\begin{equation}
|\overline U_{\mu_n}^{(r)}-\overline U_{\mu}|\le C\Delta_n^{\frac2{m+1}}(z)|\ln \Delta_n(z)|.\notag
\end{equation}
Note that
\begin{equation}
|\overline U_{\mu_n}^{(r)}-U_{\mu}|\le |\int_0^{\Delta_n(z)}\log xdF(x,z)|\le C\Delta_n^{\frac2{m+1}}(z)
|\ln(\Delta_n(z))|.\notag
\end{equation}

Let $\mathcal K=\{z\in\mathbb C :\ |z|\le R\}$ and let $\mathcal K^{c}$ denote $\mathbb C\setminus \mathcal K$.
According to Lemma \ref{compact}, we have, for $k_1$ and $R$ from Lemma \ref{compact},
\begin{equation}\label{rep1000}
1-q_n:=\E\mu_n^{(r)}(\mathcal K^{c})\le\frac{k_1}{n}+\Pr\{|\lambda_{k_1}|>R\}\le C\delta_n^{\frac12}(z).
\end{equation}
Furthermore, let ${\overline{\mu}}_n^{(r)}$ and ${\widehat{\mu}}_n^{(r)}$ be probability measures supported on 
the compact set
$K$ and $K^{(c)}$ respectively, such that
\begin{equation}\label{rep2}
\E\mu_n^{(r)}=q_n{\overline{\mu}}_n^{(r)}+(1-q_n){\widehat{\mu}}_n^{(r)}.
\end{equation}
 Introduce the logarithmic potential of the measure ${\overline{\mu}}_n^{(r)}$,
\begin{equation}
U_{{\overline{\mu}}_n^{(r)}}=-\int\log|z-\zeta|d{{\overline{\mu}}_n^{(r)}(\zeta)}.\notag
\end{equation}
Similar to the proof of Lemma \ref{lem5.1} we show that
\begin{equation}
|U_{\mu_n}^{(r)}-U_{{\overline{\mu}}_n^{(r)}}|\le C\Delta_n^{\frac14}(z)|\ln \Delta_n(z)|.\notag
\end{equation}
This implies that
\begin{equation}
\lim_{n\to\infty}U_{{\overline{\mu}}_n^{(r)}}(z)=U_{\mu}(z)\notag
\end{equation}
for all $z\in\mathbb C$. According to equality (\ref{main}), $U_{\mu}(z)$ is
equal to the potential of the $m$-th power of the  uniform distribution on the unit disc. This implies
that the measure $\mu$ coincides with the $m$-th power of uniform distribution on the unit disc. 
Since the measures ${\overline{\mu}}_n^{(r)}$ are compactly supported, Theorem 6.9 from \cite{saff}
and Corollary 2.2 from \cite{saff}
 together  imply that
\begin{equation}\label{rep3}
\lim_{n\to \infty}\overline{\mu}_n^{(r)}=\mu
\end{equation}
in the weak topology.
 Inequality (\ref{rep1000}) and relations (\ref{rep2}) and (\ref{rep2}) together imply
that
\begin{equation}
 \lim_{n\to\infty}\E\mu_n^{(r)}=\mu\notag
\end{equation}
in the weak topology.
Finally, by Lemma 1.1 in \cite{GT:2010}, we get
\begin{equation}
 \lim_{n\to\infty}\E\mu_n=\mu
\end{equation}
in the weak topology.
Thus Theorem \ref{main} is proved.

\section{Appendix}
Define  $\mathbf V_{\alpha,\beta}:=\prod_{\nu=\alpha}^{\beta}\bold X^{(\nu)}$.
\begin{lem}\label{mean}
 Under the conditions of Theorem \ref{main} we have, for any $j=1,\ldots,n$,
$k=1,\ldots,n$ and for any $1\le \alpha\le \beta\le m$,
\begin{equation}\notag
\E[\mathbf V_{\alpha,\beta}]_{jk}=0
\end{equation}
\end{lem}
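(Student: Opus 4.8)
The plan is to prove this by induction on the number of factors $\beta - \alpha + 1$, using independence of the matrices $\mathbf X^{(\nu)}$ together with independence and centering of their entries. First, for a single factor ($\alpha = \beta$), we have $[\mathbf V_{\alpha,\alpha}]_{jk} = [\mathbf X^{(\alpha)}]_{jk} = \frac1{\sqrt n} X^{(\alpha)}_{jk}$, whose expectation is $0$ by the assumption $\E X^{(\alpha)}_{jk} = 0$ in (\ref{conditions}). This gives the base case.

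For the inductive step, write $\mathbf V_{\alpha,\beta} = \mathbf V_{\alpha,\beta-1}\mathbf X^{(\beta)}$, so that
\begin{equation}\notag
[\mathbf V_{\alpha,\beta}]_{jk} = \sum_{l=1}^n [\mathbf V_{\alpha,\beta-1}]_{jl}\,[\mathbf X^{(\beta)}]_{lk}
= \frac1{\sqrt n}\sum_{l=1}^n [\mathbf V_{\alpha,\beta-1}]_{jl}\, X^{(\beta)}_{lk}.
\end{equation}
The matrix $\mathbf V_{\alpha,\beta-1}$ depends only on the entries of $\mathbf X^{(\alpha)},\ldots,\mathbf X^{(\beta-1)}$, which are independent of all entries of $\mathbf X^{(\beta)}$; hence $[\mathbf V_{\alpha,\beta-1}]_{jl}$ and $X^{(\beta)}_{lk}$ are independent for every $l$. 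Taking expectations and using this independence factorizes each term as $\E[\mathbf V_{\alpha,\beta-1}]_{jl}\cdot \E X^{(\beta)}_{lk}$, and the second factor vanishes since $\E X^{(\beta)}_{lk} = 0$. (Alternatively, one could condition on $\mathbf X^{(\beta)}$ and invoke the induction hypothesis $\E[\mathbf V_{\alpha,\beta-1}]_{jl} = 0$; either route closes the induction.) Therefore $\E[\mathbf V_{\alpha,\beta}]_{jk} = 0$, completing the proof.

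There is essentially no obstacle here: the only mild point to note is that it suffices to use either the mean-zero property of the entries of the last factor $\mathbf X^{(\beta)}$ or the inductive hypothesis on $\mathbf V_{\alpha,\beta-1}$ — both work because of the mutual independence of the $m$ families $\{X^{(\nu)}_{jk}\}$. The statement does not require independence of the entries within a single matrix, only across matrices, so the argument applies verbatim under the hypotheses of both Theorem \ref{main} and Theorem \ref{Lindeberg}.
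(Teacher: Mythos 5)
Your proof is correct and rests on exactly the same two facts the paper uses — mutual independence of the matrices $\mathbf X^{(\alpha)},\ldots,\mathbf X^{(\beta)}$ and the centering $\E X^{(\nu)}_{jk}=0$; the paper simply expands $[\mathbf V_{\alpha,\beta}]_{jk}$ in one step as a sum over index paths $\E X^{(\alpha)}_{j j_1}X^{(\alpha+1)}_{j_1 j_2}\cdots X^{(\beta)}_{j_{\beta-\alpha-1}k}$ and notes each term vanishes, whereas you organize the same computation as an induction peeling off the last factor. The two arguments are equivalent, and your version is, if anything, slightly more explicit about where independence is invoked.
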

\begin{proof}For $\alpha=\beta$ the claim is easy. Let $\alpha<\beta$ and $1\le j\le n$, $1\le k\le n$.
 Direct calculations show that
\begin{equation}\notag
 \E[\mathbf V_{\alpha,\beta}]_{jk}=\frac1{n^\frac{\beta-\alpha+1}2}\sum_{j_1=1}^{n}\sum_{j_2=1}^{n}\dots 
\sum_{j_{\beta-\alpha}=1}^{n}
\E X^{(\alpha)}_{j,j_1}X^{(\alpha+1)}_{j_1,j_2}\cdots X^{(\beta)}_{j_{\beta-\alpha-1},k}=0
\end{equation}
Thus the Lemma is proved.
\end{proof}
In all Lemmas below we shall assume that
\begin{equation}\label{as1}
 \E X_{jk}^{(\nu)}=0,\quad\E|X_{jk}^{(\nu)}|^2=1, \quad |X_{jk}^{(\nu)}|\le c\tau_n\sqrt n\quad\text{a. s.}
\end{equation}
with $\tau_n=o(1)$ such that $\tau_n^{-2}L_n(\tau_n)\le \tau_n^2$.

\begin{lem}\label{norm2}
 Assuming the  conditions of Theorem \ref{main} as well as  (\ref{as1}), we have,  for any $1\le \alpha\le \beta\le m$,
\begin{equation}
\E\|\mathbf V_{\alpha,\beta}\|_2^2\le Cn
\end{equation}

\end{lem}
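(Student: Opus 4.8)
The plan is to compute the expectation $\E\|\mathbf V_{\alpha,\beta}\|_2^2 = \E\Tr(\mathbf V_{\alpha,\beta}\mathbf V_{\alpha,\beta}^*)$ by expanding the product of matrices into sums over index paths and using independence together with the moment conditions in (\ref{as1}). Writing out the Frobenius norm, we have
\begin{equation}\notag
\E\|\mathbf V_{\alpha,\beta}\|_2^2 = \sum_{j,k=1}^n \E\,\bigl|[\mathbf V_{\alpha,\beta}]_{jk}\bigr|^2
= \frac{1}{n^{\beta-\alpha+1}}\sum_{j,k}\sum_{\mathbf i,\mathbf i'}
\E\Bigl[\prod_{\nu=\alpha}^{\beta} X^{(\nu)}_{i_{\nu-1}i_\nu}\overline{X^{(\nu)}_{i'_{\nu-1}i'_\nu}}\Bigr],
\end{equation}
where $\mathbf i=(i_{\alpha-1},\ldots,i_\beta)$ and $\mathbf i'=(i'_{\alpha-1},\ldots,i'_\beta)$ are index sequences with $i_{\alpha-1}=i'_{\alpha-1}=j$ and $i_\beta=i'_\beta=k$. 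Since the matrices $\mathbf X^{(\nu)}$ are mutually independent and have mean-zero entries, for a term to be nonzero each factor $X^{(\nu)}_{i_{\nu-1}i_\nu}\overline{X^{(\nu)}_{i'_{\nu-1}i'_\nu}}$ must either be matched (i.e. $(i_{\nu-1},i_\nu)=(i'_{\nu-1},i'_\nu)$, contributing $\E|X^{(\nu)}_{i_{\nu-1}i_\nu}|^2=1$) or else the two indices must coincide through some other within-row pairing — but because each $\mathbf X^{(\nu)}$ appears only once in the product, the only mean-zero-compatible configuration is the diagonal matching $\mathbf i=\mathbf i'$.

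Thus the sum collapses: only terms with $i_\nu=i'_\nu$ for all $\nu$ survive, giving
\begin{equation}\notag
\E\|\mathbf V_{\alpha,\beta}\|_2^2 = \frac{1}{n^{\beta-\alpha+1}}\sum_{j,k=1}^n\sum_{i_\alpha,\ldots,i_{\beta-1}} 1
= \frac{1}{n^{\beta-\alpha+1}}\cdot n^2\cdot n^{\beta-\alpha-1} = n.
\end{equation}
In fact one gets exactly $\E\|\mathbf V_{\alpha,\beta}\|_2^2 = n$ in the centered-and-normalized case, so the bound $\le Cn$ is comfortable. A cleaner way to organize this is by induction on $\beta-\alpha$: condition on $\mathbf X^{(\alpha)},\ldots,\mathbf X^{(\beta-1)}$ and write $\mathbf V_{\alpha,\beta}=\mathbf V_{\alpha,\beta-1}\mathbf X^{(\beta)}$; then $\E\bigl[\|\mathbf V_{\alpha,\beta-1}\mathbf X^{(\beta)}\|_2^2 \,\big|\, \mathbf V_{\alpha,\beta-1}\bigr] = \|\mathbf V_{\alpha,\beta-1}\|_2^2\cdot \frac1n\sum_{j,k}\E|X^{(\beta)}_{jk}|^2 = \|\mathbf V_{\alpha,\beta-1}\|_2^2$, using that $\E X^{(\beta)}_{jk}\overline{X^{(\beta)}_{lk}}=0$ for $j\ne l$ kills the cross terms. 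Taking expectations and iterating down to the base case $\E\|\mathbf V_{\alpha,\alpha}\|_2^2=\E\|\mathbf X^{(\alpha)}\|_2^2=n$ finishes the argument.

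**The main point to be careful about** is verifying that the off-diagonal (non-matched) index configurations truly vanish — this rests squarely on each random matrix factor appearing exactly once, so that there is always an unmatched mean-zero factor in any non-diagonal term, together with mutual independence across $\nu$. The truncation condition $|X^{(\nu)}_{jk}|\le c\tau_n\sqrt n$ in (\ref{as1}) plays no role here since we only use first and second moments; it is $\E X^{(\nu)}_{jk}=0$ and $\E|X^{(\nu)}_{jk}|^2=1$ that matter. One minor subtlety is whether $\E (X^{(\nu)}_{jk})^2$ might differ from $1$ — but the computation of $\E\|\mathbf V_{\alpha,\beta}\|_2^2$ only involves $\E|X^{(\nu)}_{jk}|^2$ (modulus squared, from the conjugate pairing in the Frobenius norm), so no assumption on $\E(X^{(\nu)}_{jk})^2$ is needed. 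The induction route is the least error-prone way to present it and avoids any combinatorial bookkeeping of index paths.
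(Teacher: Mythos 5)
Your proof is correct and follows essentially the same route as the paper: expand $\E\|\mathbf V_{\alpha,\beta}\|_2^2$ into a sum over index paths and use mutual independence together with the mean-zero condition to see that only the diagonally matched pairings survive, giving a count of order $n$. The only blemish is a harmless exponent typo in your final tally: the intermediate indices $i_\alpha,\ldots,i_{\beta-1}$ number $\beta-\alpha$, so the sum is $n^{-(\beta-\alpha+1)}\cdot n^{2}\cdot n^{\beta-\alpha}=n$, which matches the conclusion you state.
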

\begin{proof}We shall consider the case $\alpha<\beta$ only. Other case is easy.
 Direct calculations show that
\begin{equation}\notag
 \E\|\mathbf V_{\alpha,\beta}\|_2^2\le\frac C{n^{\beta-\alpha+1}}\sum_{j=1}^n\sum_{j_1=1}^{n}\sum_{j_2=1}^{n}
\dots \sum_{j_{\beta-\alpha}=1}^{p_{\beta-\alpha-1}}\sum_{k=1}^{p_{\beta-\alpha}}
\E [X^{(\alpha)}_{j,j_1}X^{(\alpha+1)}_{j_1,j_2}\cdots X^{(\beta)}_{j_{\beta-\alpha},k}]^2
\end{equation}
By independence  of random variables, we get
\begin{equation}\notag
\E\|\mathbf V_{\alpha,\beta}\|_2^2\le  Cn
\end{equation}
Thus the Lemma is proved.
\end{proof}

%%%%%%%%%%%%%%%%%%%%%%%%%%%%%%%%%%%%%%%%%%%%%%%%%%%%%%%%%%%%%%%%%%%%%%%%%%%%%%%%%%%%%%%%%%%%%%%%%%%%%%%%%%%%%%%%%%%%%%%%
\begin{lem}\label{norm4}
 Assuming the  conditions of Theorem \ref{main} as well as (\ref{as1}) we have, for any $j=1,\ldots n$,
 $k=1,\ldots, n$ and $r\ge1$,
\begin{equation}
 \E\|\mathbf V_{a,b}\mathbf e_k\|_2^{2r}\le C_r,
\end{equation}
and
\begin{equation}
 \E\|\mathbf e_j^T\mathbf V_{a,b}\|_2^{2r}\le C_r,
\end{equation}
with some positive constant $C_r$ depending on $r$.
\end{lem}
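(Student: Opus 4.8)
The plan is to prove both bounds by induction on the number of factors $b-a+1$, reducing the inductive step to a single estimate on the action of one normalized matrix on a fixed vector. First I would isolate the following sub-lemma: if $\mathbf v\in\mathbb C^n$ is deterministic and $\mathbf X=\frac1{\sqrt n}(X_{jl})$ has independent entries with $\E X_{jl}=0$, $\E|X_{jl}|^2=1$ and $|X_{jl}|\le c\tau_n\sqrt n$ (as in (\ref{as1})), then $\E\|\mathbf X\mathbf v\|_2^{2r}\le C_r\|\mathbf v\|_2^{2r}$ with $C_r$ independent of $n$ and $\mathbf v$. Granting this, the base case $b=a$ is the sub-lemma with $\mathbf v=\mathbf e_k$, since $\mathbf V_{a,a}\mathbf e_k$ is just a column of $\mathbf X^{(a)}$. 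For the inductive step I would write $\mathbf V_{a,b}\mathbf e_k=\mathbf X^{(a)}\mathbf y$ with $\mathbf y:=\mathbf V_{a+1,b}\mathbf e_k$, note that $\mathbf y$ is independent of $\mathbf X^{(a)}$, condition on $\mathbf y$, apply the sub-lemma to obtain $\E[\|\mathbf X^{(a)}\mathbf y\|_2^{2r}\mid\mathbf y]\le C_r\|\mathbf y\|_2^{2r}$, and then take expectations and invoke the inductive hypothesis $\E\|\mathbf V_{a+1,b}\mathbf e_k\|_2^{2r}\le C_r'$. The second bound follows verbatim after transposing: $(\mathbf e_j^T\mathbf V_{a,b})^T=(\mathbf X^{(b)})^T\cdots(\mathbf X^{(a)})^T\mathbf e_j$, and the matrices $(\mathbf X^{(\nu)})^T$ satisfy exactly the same hypotheses, so the first bound applies to this reversed product.

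To prove the sub-lemma I would normalize $\|\mathbf v\|_2=1$ and write $\|\mathbf X\mathbf v\|_2^2=\sum_{j=1}^n|Z_j|^2$, where $Z_j:=\frac1{\sqrt n}\sum_{l=1}^n X_{jl}v_l$ are independent over $j$ with $\E|Z_j|^2=\frac1n\sum_l|v_l|^2=\frac1n$. The first ingredient is a uniform moment bound $\E|Z_j|^{2p}\le C_p/n$ for every integer $p\ge1$: by Rosenthal's inequality (applied to real and imaginary parts) for the sum $\sum_l X_{jl}v_l$,
$$
\E\Big|\sum_{l}X_{jl}v_l\Big|^{2p}\le C_p\Big[\big(\textstyle\sum_l|v_l|^2\big)^p+\sum_l|v_l|^{2p}\,\E|X_{jl}|^{2p}\Big],
$$
and since $\E|X_{jl}|^{2p}\le(c\tau_n\sqrt n)^{2p-2}\le n^{p-1}$ (for $n$ large, so that $c\tau_n\le1$) and $\sum_l|v_l|^{2p}\le(\sum_l|v_l|^2)^p=1$, the right-hand side is $\le C_p n^{p-1}$; dividing by $n^p$ gives the claim. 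The second ingredient is another application of Rosenthal's inequality, now to the centred sum $\sum_j(|Z_j|^2-\frac1n)$: its $2r$-th moment is bounded by $C_r\big[(\sum_j\E|Z_j|^4)^r+\sum_j\E\,||Z_j|^2-\tfrac1n|^{2r}\big]$, and the previous step gives $\E|Z_j|^4\le C_2/n$ and $\E||Z_j|^2-\tfrac1n|^{2r}\le C_r/n$, so both sums are $O(1)$. Hence $\E\|\mathbf X\mathbf v\|_2^{2r}=\E\big(1+\sum_j(|Z_j|^2-\tfrac1n)\big)^{2r}\le C_r$, which is the sub-lemma.

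The only point needing care — and the step I expect to be the main obstacle — is keeping every constant independent of $n$ despite the crude truncation level $c\tau_n\sqrt n$: the ``heavy-tail'' terms in both applications of Rosenthal's inequality are governed by $\E|X_{jl}|^{2p}\le(c\tau_n)^{2p-2}n^{p-1}$, and after the normalizations these contribute $(c\tau_n)^{2p-2}/n\le C_p/n\to0$ rather than growing with $n$; this is precisely where $\tau_n\to0$ (or merely $\tau_n$ bounded) from (\ref{as1}) enters. Everything else is a routine combination of independence and moment inequalities, entirely in the spirit of the proof of Lemma \ref{norm2}.
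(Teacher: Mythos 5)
Your proof is correct, but it takes a genuinely different route from the paper's. The paper proves Lemma \ref{norm4} by the direct combinatorial moment method: it expands $\E\|\mathbf V_{a,b}\mathbf e_k\|_2^{2r}$ into a multiple sum of expectations of products of entries, observes that a term vanishes unless every entry occurs with multiplicity at least two, bounds each surviving expectation by powers of $c\tau_n\sqrt n$ according to the multiplicities (inequalities (\ref{mom1})--(\ref{mom3})), and then counts the admissible index configurations (inequality (\ref{finish})) to see that the gain in the number of terms is exactly offset by the loss from high multiplicities. Your argument replaces all of this counting by an induction over the factors: the single-matrix sub-lemma $\E\|\mathbf X\mathbf v\|_2^{2r}\le C_r\|\mathbf v\|_2^{2r}$ (proved by two applications of Rosenthal's inequality, first within a row and then across the independent rows) plus conditioning on $\mathbf y=\mathbf V_{a+1,b}\mathbf e_k$, which is legitimate because the factors $\mathbf X^{(a)},\ldots,\mathbf X^{(b)}$ are mutually independent and each appears once in the product. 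Your version is more modular and arguably more robust — the delicate multiplicity bookkeeping of the paper (which is written rather tersely) is packaged into a standard moment inequality, and the truncation level $c\tau_n\sqrt n$ enters only through $\E|X_{jl}|^{2p}\le(c\tau_n\sqrt n)^{2p-2}$, exactly as you identify; the price is the external appeal to Rosenthal. Two cosmetic points: in the last display of your sub-lemma the exponent should be $r$ rather than $2r$, since $\|\mathbf X\mathbf v\|_2^{2r}=\bigl(1+\sum_j(|Z_j|^2-\tfrac1n)\bigr)^{r}$ (immaterial, as the bound holds for every power); and for the row bound the relevant operation is the adjoint rather than the transpose, which changes nothing since the norms agree and the conjugated entries satisfy the same hypotheses.
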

\begin{proof}
 By definition  of the matrices $\mathbf V_{a,b}$, we may write
\begin{equation}
 \|\mathbf e_j\mathbf V_{a,b}\|_2^{2}=\frac1{n^{b-a+1}}
\sum_{l=1}^{n}\left|\sum_{j_a=1}^{n}\cdots\sum_{j_{b-1}=1}^{n}
X_{jj_a}^{(a)}\cdots X_{j_{b-1}l}^{(b)}\right|^2
\end{equation}
Using this representation, we get
\begin{equation}\label{mom10}
 \E\|\mathbf V_{a,b}\mathbf e_k\|_2^{2r}=\frac1{n^{r(b-a)}}\sum_{l_1=1}^{n}\cdots\sum_{l_r=1}^{n}\E\prod_{q=1}^r
\left(\sum_{j_a=1}^{n}\cdots\sum_{j_{b-1}=1}^{n}\sum_{\widehat j_a=1}^{n}\cdots\sum_{\widehat j_{b-1}=1}^{n}
A^{(l_q)}_{(j_a,\ldots,j_b,\widehat j_1,\ldots,\widehat j_b)}\right)
\end{equation}
where
\begin{equation}\label{per}
A^{(l_q)}_{(j_a,\ldots,j_b,\widehat j_1,\ldots,\widehat  j_b)}=X_{jj_a}^{(a)}
\overline X_{j \widehat j_a}^{(a)}X_{j_aj_{a+1}}^{(a)}\overline X_{{\widehat j}_a \widehat j_{a+1}}^{(a)}
\cdots X_{j_{b-2}j_{b-1}}^{(b)} \overline X_{\widehat j_{b-2}\widehat  j_{b-1}}^{(b-1)}X_{j_{b-1}l_q}^{(b)}
\overline X_{\widehat j_{b-1}l_q}^{(b)}.
\end{equation}
By $\overline x$ we denote the  complex conjugate of the  number $x$.
 Expanding  the product on  the r.h.s of (\ref{mom10}), we get
\begin{align}
 \E\|\mathbf V_{a,b}\mathbf e_k\|_2^{2r}={\sum}^{**}\E\prod_{q=1}^rA^{(l_q)}_{(j_a^{(q)},\ldots,j_b^{(q)},
{\widehat j}_1^{(\nu)},\ldots,{\widehat j}_b^{(q)})},
\end{align}
where ${\sum}^{**}$ is taken  over all set of indices $j_{a}^{(q)},\ldots, j_{b-1}^{(q)}, l_q$ and
${\widehat j}_{a}^{(\nu)},\ldots,{\widehat j}_{b-1}^{(q)}$ where
$j_k^{(q)},{\widehat j}_k^{(q)}=1,\ldots,p_k$, $k=a,\ldots,b-1$, $l_q=1,\ldots,p_{b}$ and  $q=1,\ldots,r$.
Note that the summands in the right hand side of (\ref{per}) is equal 0 if there is at least one term in the product \ref{per}
which appears only once. This implies that  the summands in the  right hand side of (\ref{per}) 
are  not equal zero only if the union of all sets of indices in r.h.s of (\ref{per}) consist of at least $r$
 different terms and each term appears at least twice.

Introduce the following random variables, for $\nu=a+1,\ldots, b-1$,
\begin{align}
 \zeta^{(\nu)}_{j^{(1)}_{\nu-1},\ldots,j^{(r)}_{\nu-1},j^{(1)}_{\nu},\ldots,j^{(r)}_{\nu},
{\widehat j}^{(1)}_{\nu-1},\ldots,{\widehat j}^{(r)}_{\nu-1},{\widehat j}^{(1)}_{\nu},\ldots,
{\widehat j}^{(r)}_{\nu}}&
=X^{(\nu)}_{j^{(1)}_{\nu-1},j^{(1)}_{\nu}}
\cdots X^{(\nu)}_{j^{(r)}_{\nu-1},j^{(r)}_{\nu}}
{\overline X}^{(\nu)}_{{\widehat j}^{(1)}_{\nu-1},{\widehat j}^{(1)}_{\nu}},
\cdots {\overline X}^{(\nu)}_{{\widehat j}^{(r)}_{\nu-1},{\widehat j}^{(r)}_{\nu}},
\end{align}
and
\begin{align}
\zeta^{(a)}_{j^{(1)}_{1},\ldots,j^{(r)}_{1},
{\widehat j}^{(1)}_{1},\ldots,{\widehat j}^{(r)}_{1}}&=X^{(a)}_{jj_1^{(a)}}
\cdots X^{(a)}_{j^{(r)}_{a}j^{(r)}_{a+1}}
{\overline X}^{(a)}_{j{\widehat j}^{(1)}_{a}}
\cdots {\overline X}^{(a)}_{{\widehat j}^{(r)}_{a},{\widehat j}^{(r)}_{a+1}}
\notag\\
\zeta^{(b)}_{j^{(1)}_{b-1},\ldots,j^{(r)}_{b-1},
{\widehat j}^{(1)}_{b-1},\ldots,{\widehat j}^{(r)}_{b-1},l_q}
&=X^{(b)}_{j^{(1)}_{b-1}j^{(1)}_{b}}
\cdots X^{(b)}_{j^{(r)}_{b-1}l_q}
{\overline X}^{(b)}_{{\widehat j}^{(1)}_{b-1},l_q},
\cdots {\overline X}^{(b)}_{{\widehat j}^{(r)}_{b-1},l_q}.
\notag
\end{align}
 Let the set of indices  $j^{(1)}_{a},\ldots,j^{(r)}_{a},
{\widehat j}^{(1)}_{a},\ldots,{\widehat j}^{(r)}_{a}$ contain $t_a$ different indices, say
$i_1^{(a)},\ldots,i_{t_a}^{(a)}$ with multiplicities $k_1^{(a)},\ldots,k_{t_a}^{(a)}$
respectively, $k_1^{(a)}+\ldots+k_{t_a}^{(a)}=2r$.
Note that \newline $\min\{k_1^{(a)},\ldots,k_{t_a}^{(a)}\}\le 2$. Otherwise, $|\E\zeta^{(a)}_{j^{(1)}_{a},
\ldots,j^{(r)}_{a},
{\widehat j}^{(1)}_{a},\ldots,{\widehat j}^{(r)}_{a}}|=0$. By assumption (\ref{as1}), we have
\begin{equation}\label{mom1}
|\E\zeta^{(a)}_{j^{(1)}_{a},\ldots,j^{(r)}_{a},
{\widehat j}^{(1)}_{a},\ldots,{\widehat j}^{(r)}_{a}}|\le C(\tau_n\sqrt n)^{2r-2t_a}
\end{equation}
A similar bound we get for $|\E
\zeta^{(b)}_{j^{(1)}_{b-1},\ldots,j^{(r)}_{1},
{\widehat j}^{(1)}_{b-1},\ldots,{\widehat j}^{(r)}_{b-1},l_q}|$. Assume that
the set of indices $\{j^{(1)}_{b-1},\ldots,j^{(r)}_{b-1}$,
${\widehat j}^{(1)}_{b-1},\ldots,{\widehat j}^{(r)}_{b-1}\}$ contains $t_{b-1}$ different indices, say,
$i_1^{(b-1)},\ldots,i_{t_{b-1}}^{(a)}$ with multiplicities\newline $k_1^{(b-1)},\ldots,k_{t_{b-1}}^{(a)}$
respectively, $k_1^{(b-1)}+\ldots+k_{t_{b-1}}^{(a)}=2r$. Then
\begin{equation}\label{mom2}
|\E
\zeta^{(b)}_{j^{(1)}_{b-1},\ldots,j^{(r)}_{1},
{\widehat j}^{(1)}_{b-1},\ldots,{\widehat j}^{(r)}_{b-1},l_q}|\le C(\tau_n\sqrt n)^{2r-2t_{b-1}}
\end{equation}
Furthermore, assume that for $a+1\le \nu\le b-2$ there are $t_{\nu}$
different pairs of  indices, say, $(i_{a},i'_a),
\ldots(i_{t_{b}},i'_{t_{b}})$ in the set\newline $\{j^{(1)}_{a},\ldots,j^{(r)}_{a},
{\widehat j}^{(1)}_{a},\ldots,{\widehat j}^{(r)}_{a},\ldots,j^{(1)}_{b-1},\ldots,j^{(r)}_{b-1},
{\widehat j}^{(1)}_{b-1},\ldots,{\widehat j}^{(r)}_{b-},l_1,l_r\}$
with multiplicities\newline$k_1^{(\nu)},\ldots,k_{t_{\nu}}^{(\nu)}$.
Note that
\begin{equation}
 k_1^{(\nu)}+\ldots+k_{t_{\nu}}^{(\nu)}=2r
\end{equation}
and
\begin{equation}\label{mom3}
\E\zeta^{(\nu)}_{j^{(1)}_{\nu-1},\ldots,j^{(r)}_{\nu-1},j^{(1)}_{\nu},\ldots,j^{(r)}_{\nu},
{\widehat j}^{(1)}_{\nu-1},\ldots,{\widehat j}^{(r)}_{\nu-1},{\widehat j}^{(1)}_{\nu},\ldots,
{\widehat j}^{(r)}_{\nu}}\le C(\tau_n\sqrt n)^{2r-2t_{\nu}}.
\end{equation}
The inequalities (\ref{mom1})-(\ref{mom3}) together yield
\begin{equation}\label{mom11}
 |\E\prod_{q=1}^rA^{(l_q)}_{(j_a^{(q)},\ldots,j_b^{(q)},{\widehat j}_1^{(\nu)},\ldots,{\widehat j}_b^{(q)})}|
\le C(\tau_n\sqrt n)^{2r(b-a)-2(t_1+\ldots+t_{b-a})}.
\end{equation}
It is straightforward to check that the number $\mathcal N(t_a,\ldots,t_{b})$ of sequences of indices
\newline$\{j^{(1)}_{a},\ldots,j^{(r)}_{a},
{\widehat j}^{(1)}_{a},\ldots,{\widehat j}^{(r)}_{a},\ldots,j^{(1)}_{b-1},\ldots,j^{(r)}_{b-1},
{\widehat j}^{(1)}_{b-1},\ldots,{\widehat j}^{(r)}_{b-},l_1,\ldots,l_r\}$ with $t_a,\ldots,t_{b}$
of different pairs  satisfies the inequality
\begin{equation}\label{finish}
 \mathcal N(t_a,\ldots,t_{b})\le Cn^{t_a+\ldots+t_b},
\end{equation}
with $1\le t_i\le r,\quad i=a,\ldots,b$.
Note that in the case $t_a=\cdots=t_b=r$ the inequalities (\ref{mom1})--(\ref{mom3}) imply
\begin{equation}\label{mom4}
 \E\zeta^{(\nu)}_{j^{(1)}_{\nu-1},\ldots,j^{(r)}_{\nu-1},j^{(1)}_{\nu},\ldots,j^{(r)}_{\nu},
{\widehat j}^{(1)}_{\nu-1},\ldots,{\widehat j}^{(r)}_{\nu-1},{\widehat j}^{(1)}_{\nu},\ldots,
{\widehat j}^{(r)}_{\nu}}\le C
\end{equation}
The inequalities  (\ref{finish}),  (\ref{mom11}), (\ref{mom4}), and representation (\ref{mom10}) 
together conclude the proof.
\end{proof}

%%%%%%%%%%%%%%%%%%%%%%%%%%%%%%%%%%%%%%%%%%%%%%%%%%%%%%%%%%%%%%%%%%%%%%%%%%%%%%%%%%%%%%%%%%%%%%%%%%%%%%%%%%%%%%%%%%%%%%%%%%%%%%%%%%%%%%%%%%%%%%%%%%
{\bf The Largest Singular Value.}
Recall that $|\lambda_1^{(m)}|\ge\ldots\ge|\lambda_n^{(m)}|$ denotes the eigenvalues of 
the matrix $\bold W$ ordered by  decreasing absolute values and let $s_1^{(m)}\ge\ldots\ge
s_n^{(m)}$ denote the singular values of the matrix  $\bold W$.

We show the following 
\begin{lem}\label{largeval}Under the  conditions of Theorem \ref{main} we have, 
 for sufficiently large $K\ge 1$ 
\begin{equation}
\Pr\{s_1^{(m)}\ge n\}\le C/{n}
\end{equation}
for some positive constant $C>0$.
\end{lem}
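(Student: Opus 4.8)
The plan is to bypass any delicate operator-norm concentration estimate for the individual factors and instead dominate the largest singular value of $\mathbf W$ by its Frobenius norm, for which a sharp second-moment bound is already available. As in the truncation step of Section~2, after passing to the truncated model we may assume that the entries satisfy (\ref{as1}); under that assumption Lemma~\ref{norm2} applies to $\mathbf V_{1,m}=\mathbf W$ and gives $\E\|\mathbf W\|_2^2\le Cn$.

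First I would note that $s_1^{(m)}=\|\mathbf W\|$ is the operator norm of $\mathbf W$, so $s_1^{(m)}\le\|\mathbf W\|_2$ and hence the event $\{s_1^{(m)}\ge n\}$ is contained in $\{\|\mathbf W\|_2^2\ge n^2\}$. Then Markov's inequality together with Lemma~\ref{norm2} yields
\begin{equation}\notag
\Pr\{s_1^{(m)}\ge n\}\le\Pr\{\|\mathbf W\|_2^2\ge n^2\}\le\frac{\E\|\mathbf W\|_2^2}{n^2}\le\frac{C}{n},
\end{equation}
which is exactly the assertion. It is worth stressing that one should \emph{not} try to split $\{s_1^{(m)}\ge n\}$ into the union of the events $\{s_1(\mathbf X^{(\nu)})\ge n^{1/m}\}$ and bound each factor by its own Frobenius norm: since $\E\|\mathbf X^{(\nu)}\|_2^2=n$, that route only gives a probability of order $n^{1-2/m}$ per factor, which fails to be $O(1/n)$ once $m\ge3$. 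Applying the Frobenius bound to the whole product $\mathbf W$, as provided by Lemma~\ref{norm2}, is what makes the estimate work for all $m$.

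There is therefore no genuine obstacle inside Lemma~\ref{largeval} itself; all the substance is in Lemma~\ref{norm2}, whose combinatorial moment expansion shows that $\E\|\mathbf V_{\alpha,\beta}\|_2^2$ remains of order $n$ after multiplying $m$ truncated factors (reflecting the bounded support of the limiting Fuss--Catalan singular-value law). The parameter $K$ in the statement is not needed for this argument — it is a leftover from the normalization $\max_{\nu}s_1(\mathbf X^{(\nu)})\le Kn$ used in the companion lemmas — and the displayed bound holds for every $n$ once $n\ge1$.
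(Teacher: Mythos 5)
Your proof is correct and is essentially identical to the paper's: the paper also applies Chebyshev/Markov to $\E\Tr(\mathbf W\mathbf W^*)=\E\|\mathbf W\|_2^2\le Cn$ and divides by $n^2$, which is exactly your Frobenius-norm domination of $s_1^{(m)}$. Your side remarks (that $K$ is not needed here and that splitting over the factors would fail for $m\ge 3$) are accurate but not required.
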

\begin{proof}Using Chebyshev's inequality, we get
\begin{equation}
 \Pr\{s_1^{(m)}\ge n\}\le \frac1{n^2}
\E\Tr\Big(\bold W\bold W^*\Big)\le
\frac1{n}
\end{equation}
Thus the Lemma is  proved. 
\end{proof}

\begin{lem}\label{var0}Under conditions of Theorem \ref{main} assuming (\ref{as1}), we have
\begin{equation}\notag
\E|\frac1n(\Tr \mathbf R-\E\Tr \mathbf R)|\le \frac C{nv^2}.
\end{equation}
\end{lem}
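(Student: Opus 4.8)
The statement is a variance bound for the normalized trace of the resolvent $\mathbf R=(\mathbf V(z)-\alpha\mathbf I)^{-1}$, $\alpha=u+iv$. The plan is to prove it by a martingale difference decomposition over the independent scalar coordinates $X^{(\nu)}_{jk}$, $1\le\nu\le m$, $1\le j,k\le n$. Fix an enumeration of these $mn^2$ variables, let $\mathcal F_k$ be the $\sigma$-field generated by the first $k$ of them, and set $\gamma_k=\E[\Tr\mathbf R\mid\mathcal F_k]-\E[\Tr\mathbf R\mid\mathcal F_{k-1}]$. Then $\Tr\mathbf R-\E\Tr\mathbf R=\sum_k\gamma_k$ with orthogonal increments, so $\E|\Tr\mathbf R-\E\Tr\mathbf R|^2=\sum_k\E|\gamma_k|^2$, and the claim reduces to $\sum_k\E|\gamma_k|^2\le C/v^4$, since then $\E|\tfrac1n(\Tr\mathbf R-\E\Tr\mathbf R)|\le\tfrac1n\big(\sum_k\E|\gamma_k|^2\big)^{1/2}\le C/(nv^2)$.

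To estimate $\gamma_k$, fix the coordinate revealed at step $k$, say $X^{(\nu)}_{jl}$, and let $\mathbf R^{(k)}$ be the resolvent with this entry set to $0$; since $\mathbf R^{(k)}$ is independent of $X^{(\nu)}_{jl}$ we have $\E[\Tr\mathbf R^{(k)}\mid\mathcal F_k]=\E[\Tr\mathbf R^{(k)}\mid\mathcal F_{k-1}]$ and may replace $\Tr\mathbf R$ by $\Tr\mathbf R-\Tr\mathbf R^{(k)}$. Changing this single entry alters $\mathbf W=\prod_\nu\mathbf X^{(\nu)}$ by the rank-one matrix $\tfrac1{\sqrt n}X^{(\nu)}_{jl}\,\mathbf a_j\mathbf b_l^{\mathsf T}$ with $\mathbf a_j=\mathbf X^{(1)}\cdots\mathbf X^{(\nu-1)}\mathbf e_j$ and $\mathbf b_l^{\mathsf T}=\mathbf e_l^{\mathsf T}\mathbf X^{(\nu+1)}\cdots\mathbf X^{(m)}$, which depend on disjoint, hence independent, families of matrices, so $\mathbf V(z)$ changes by a Hermitian perturbation of rank $\le 2$. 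Two inputs are then combined: the interlacing bound $|\Tr\mathbf R-\Tr\mathbf R^{(k)}|\le C/v$, used to control error and higher-order terms together with $\|\mathbf R\|,\|\mathbf R^{(k)}\|\le 1/v$; and the first-order resolvent expansion $\Tr\mathbf R-\Tr\mathbf R^{(k)}=-\tfrac1{\sqrt n}X^{(\nu)}_{jl}\,\mathbf b_l^{\mathsf T}\mathbf N^{(k)}\mathbf a_j+(\text{h.o.t.})$, with $\mathbf N^{(k)}$ assembled from the blocks of $\mathbf R^{(k)}$ and $\|\mathbf N^{(k)}\|\le C/v^2$. Since $X^{(\nu)}_{jl}$ has mean zero and is independent of $\mathbf N^{(k)},\mathbf a_j,\mathbf b_l$, the martingale increment of the leading term equals $-\tfrac1{\sqrt n}X^{(\nu)}_{jl}\,\E[\mathbf b_l^{\mathsf T}\mathbf N^{(k)}\mathbf a_j\mid\mathcal F_{k-1}]$, whence $\E|\gamma_k|^2\le\tfrac Cn\,\E|\mathbf b_l^{\mathsf T}\mathbf N^{(k)}\mathbf a_j|^2$.

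Summing over the entries inside a fixed factor $\nu$ is where the sharp rate is obtained, and where the difficulty lies. Bounding $|\mathbf b_l^{\mathsf T}\mathbf N^{(k)}\mathbf a_j|\le\|\mathbf N^{(k)}\|\,\|\mathbf a_j\|\,\|\mathbf b_l\|$ entrywise and using $\sum_j\E\|\mathbf a_j\|^2=\E\|\mathbf X^{(1)}\cdots\mathbf X^{(\nu-1)}\|_2^2\le Cn$, $\sum_l\E\|\mathbf b_l\|^2\le Cn$ (Lemma \ref{norm2}) together with independence gives only $\sum_{j,l}\E|\mathbf b_l^{\mathsf T}\mathbf N^{(k)}\mathbf a_j|^2=O(n^2/v^4)$, i.e. the weaker rate $C/(\sqrt n\,v^2)$. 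To recover the missing factor $n$ one replaces $\mathbf N^{(\nu,j,l)}$, for the purpose of this sum, by the single matrix $\mathbf N^{(\nu)}$ built from the full resolvent $\mathbf R$ (a rank-one resolvent perturbation estimate), so that $\sum_{j,l}|\mathbf b_l^{\mathsf T}\mathbf N^{(\nu)}\mathbf a_j|^2=\|\mathbf B\,\mathbf N^{(\nu)}\mathbf A\|_2^2\le\|\mathbf B\|^2\|\mathbf N^{(\nu)}\|^2\|\mathbf A\|_2^2\le\tfrac C{v^4}\|\mathbf B\|^2\|\mathbf A\|_2^2$ with $\mathbf A=\mathbf X^{(1)}\cdots\mathbf X^{(\nu-1)}$, $\mathbf B=\mathbf X^{(\nu+1)}\cdots\mathbf X^{(m)}$; since only the \emph{operator} norm of $\mathbf B$ enters (and $\E\|\mathbf B\|^2\le C$ by standard norm bounds for truncated products) while only $\E\|\mathbf A\|_2^2\le Cn$ is needed, this sum is $O(n/v^4)$, giving $O(1/v^4)$ after the prefactor $1/n$ and summing over the finitely many $\nu$. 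The main obstacle is precisely to show that the replacement error $\mathbf N^{(\nu,j,l)}-\mathbf N^{(\nu)}$ and the higher-order terms of the expansion are negligible; this should be done using the moment bounds $\E\|\mathbf a_j\|^{2r}\le C_r$, $\E\|\mathbf b_l\|^{2r}\le C_r$ of Lemma \ref{norm4}, the truncation $|X^{(\nu)}_{jl}|\le c\tau_n\sqrt n$, and the fact that bilinear forms in the partial-product vectors $\mathbf a_j,\mathbf b_l$ are far smaller than the product of their norms. (For Gaussian entries one can bypass this by applying the Poincar\'e inequality directly to $X^{(\nu)}_{jl}\mapsto\tfrac1n\Tr\mathbf R$, whose gradient already features the single common matrix $\mathbf N^{(\nu)}$; the general case then follows by a Lindeberg-type interpolation, which is an alternative route one could take.)
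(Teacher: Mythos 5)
There is a genuine gap, and your route is also substantially harder than the one the paper takes. You decompose over the $mn^2$ individual entries $X^{(\nu)}_{jl}$, which forces you to extract cancellation from the mean-zero linear term $\tfrac1{\sqrt n}X^{(\nu)}_{jl}\,\mathbf b_l^{\mathsf T}\mathbf N\mathbf a_j$; as you yourself note, the crude bound then loses a factor of $n$, and the two devices you invoke to recover it are exactly the unproven parts. First, the control of the replacement error $\mathbf N^{(\nu,j,l)}-\mathbf N^{(\nu)}$ and of the higher-order terms of the resolvent expansion is only announced (``the main obstacle is precisely to show\dots''), not carried out; under the truncation $|X^{(\nu)}_{jl}|\le c\tau_n\sqrt n$ the perturbation is not uniformly small, so this step is not routine. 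Second, the inequality $\E\|\mathbf B\|^2\le C$ for the \emph{operator} norm of a product of truncated matrices is asserted as standard, but nothing in the paper provides it: Lemma \ref{norm2} only gives the Frobenius bound $\E\|\mathbf V_{\alpha,\beta}\|_2^2\le Cn$, and Lemma \ref{largeval} only gives $\Pr\{s_1\ge n\}\le C/n$. Under a bare second--moment assumption (even after truncation at level $c\tau_n\sqrt n$) a constant bound on $\E\|\mathbf X^{(\nu)}\|^2$ requires a separate high-moment or $\eps$-net argument that is not in the paper. So as written the proposal does not close.

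The paper avoids all of this by martingalizing over \emph{rows} rather than entries. Replacing the $j$-th row of $\mathbf X^{(\nu)}$ (and the matching column of ${\mathbf X^{(m-\nu+1)}}^*$) by zeros changes $\mathbf V(z)$ by a matrix of rank at most $4m$, so the interlacing inequality
\begin{equation}\notag
|\Tr(\mathbf A-\alpha\mathbf I)^{-1}-\Tr(\mathbf B-\alpha\mathbf I)^{-1}|\le \frac{\mathrm{rank}(\mathbf A-\mathbf B)}{v}
\end{equation}
bounds each of the $mn$ martingale increments \emph{deterministically} by $C/v$; the $L^2$ orthogonality of increments then yields $\E|\tfrac1n(\Tr\mathbf R-\E\Tr\mathbf R)|^2\le C/(nv^2)$ with no expansion, no cancellation argument, and no operator-norm input. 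Note that this delivers the bound for the \emph{second} moment (consistent with Lemma \ref{var1}); the printed statement, with a first absolute moment on the left-hand side, would be stronger than what either argument gives and appears to be a typo. If you want to keep your entry-wise scheme, you must actually prove the two missing ingredients above; otherwise the row-wise rank argument is the efficient way to obtain the bound that the rest of the paper actually uses.
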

\begin{proof} 
Consider the matrix $\mathbf X^{(1,j)}$ obtained from the matrix $\mathbf
X^{(1)}$  by replacing its  $j$-th row by a row with zero-entries. We define the following matrices
\begin{equation} \notag\mathbf H^{(\nu,j)}=\mathbf H^{(\nu)}-\mathbf e_j\mathbf
e_j^T\mathbf H^{(\nu)},
\end{equation}
and
\begin{equation}\notag
{\widetilde{\mathbf H}}^{(m-\nu+1,j)}={{\mathbf
H}}^{(m-\nu+1)}-{{\mathbf H}}^{(m-\nu+1)}\mathbf
e_{j+n}\mathbf e_{j+n}^T.
\end{equation}
For the simplicity  we shall assume that  $\nu\le m-\nu+1$.
 Define $$\mathbf
V^{(\nu,j)}=\prod_{q=1}^{\nu-1}\mathbf H^{(q)}\,\mathbf
H^{(\nu,j)}\prod_{q=\nu+1}^{m-\nu}\mathbf H^{(q)}{\widetilde{\mathbf
H}}^{(m-\nu+1,j)}\prod_{q=m-\nu+2}^{m}\mathbf H^{(q)}.$$
Let $\mathbf V{(\nu,j)}(z)=\mathbf V{(\nu,j)}\mathbf J-\mathbf J(z)$.  We
shall use the following inequality. For any Hermitian matrices  $\mathbf A$ and $\mathbf
B$ with spectral distribution function $F_A(x)$ and $F_B(x)$
respectively, we have
\begin{equation}\label{trace}
|\Tr (\mathbf A-\alpha\mathbf I)^{-1}-\Tr (\mathbf B-\alpha\mathbf I)^{-1}|\le
\frac {\text{\rm rank}(\mathbf A-\mathbf B)}{v},
\end{equation}
where $\alpha=u+iv$.
It is straightforward to show that
\begin{equation}\label{rank}\text{\rm rank}(\mathbf V(z)-\mathbf V^{(\nu,j)}(z))=
\text{\rm rank}(\mathbf V\mathbf J-\mathbf V^{(\nu,j)}\mathbf J)\le 4m.
\end{equation}
Inequality (\ref{trace}) and (\ref{rank}) together imply
\begin{equation}\notag
|\frac1{2n}(\Tr \mathbf R-\Tr \mathbf R^{(\nu,j)})|\le \frac C{nv}.
\end{equation}

After this remark we may apply a standard martingale expansion
procedure. We  introduce
$\sigma$-algebras $\mathcal F_{\nu,j}=\sigma\{X^{(\nu)}_{lk},\,
j< l\le n, k=1,\ldots,n; X^{(q)}_{pk}$,
$q=\nu+1,\ldots m, \,p=1,\ldots,n,\, k=1,\ldots,n\}$ and use the representation
\begin{equation}\notag
\Tr\mathbf R-\E\Tr\mathbf R=\sum_{\nu=1}^m\sum_{j=1}^{n}(\E_{\nu,j-1}\Tr\mathbf R-\E_{\nu,j}\Tr\mathbf R),
\end{equation}
where $\E_{\nu,j}$ denotes  conditional expectation given the  $\sigma$-algebra $\mathcal F_{\nu,j}$.
 Note that $\mathcal F_{\nu,n}=\mathcal F_{\nu+1,0}$
\end{proof}
%==================================================================================================================================
\begin{lem}\label{var1}
Under the conditions of Theorem \ref{main} we have, for $1\le a,\le m$,
\begin{equation}\notag
\E|\frac1n(\sum_{k=1}^{n}[\mathbf V_{a+1,m}\mathbf J\mathbf R\mathbf
V_{1,m-a}]_{k,k+n}-\E\sum_{j=1}^{n}[\mathbf V_{a+1,m}\mathbf J\mathbf R\mathbf V_{1,m-a}]_{kk+{n}})|^2\le
 \frac C{n v^4}.
\end{equation}
and, for $1\le a,\le m-1$,
\begin{equation}\notag
\E|\frac1n(\sum_{k=1}^{n}[\mathbf V_{m-a+2,m}\mathbf J\mathbf R\mathbf
V_{1,m-a+1}]_{k,k}-\E\sum_{j=1}^{n}[\mathbf V_{m-a+2,m}\mathbf J\mathbf R\mathbf V_{1,m-a+1}]_{kk})|^2
\le \frac C{n v^4}.
\end{equation}
\end{lem}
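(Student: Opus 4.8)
\emph{Proof plan.} Both estimates are proved by the same martingale difference argument; I describe the first one, the second being identical after replacing the off-diagonal block pattern $[\,\cdot\,]_{k,k+n}$ by the diagonal one $[\,\cdot\,]_{k,k}$ (with the correspondingly shifted index ranges). Writing $\mathbf P:=\sum_{k=1}^n\mathbf e_{k+n}\mathbf e_k^T$, the quantity in question equals $\Phi:=\frac1n\Tr\big(\mathbf V_{a+1,m}\mathbf J\mathbf R\,\mathbf V_{1,m-a}\mathbf P\big)$ and we must show $\E|\Phi-\E\Phi|^2\le C(nv^4)^{-1}$. I would reuse the $\sigma$-algebras $\mathcal F_{\nu,j}$ from the proof of Lemma \ref{var0} (for which $\mathcal F_{\nu,n}=\mathcal F_{\nu+1,0}$), so that $\Phi-\E\Phi=\sum_{\nu=1}^m\sum_{j=1}^n\gamma_{\nu,j}$ with $\gamma_{\nu,j}:=\mathbf E[\Phi\mid\mathcal F_{\nu,j-1}]-\mathbf E[\Phi\mid\mathcal F_{\nu,j}]$ a martingale difference array, whence $\E|\Phi-\E\Phi|^2=\sum_{\nu,j}\E|\gamma_{\nu,j}|^2$. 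Let $\Phi^{(\nu,j)}$ denote $\Phi$ with $\mathbf H^{(\nu)},\mathbf H^{(m-\nu+1)}$ replaced by the row/column-truncated matrices $\mathbf H^{(\nu,j)},\widetilde{\mathbf H}^{(m-\nu+1,j)}$ of Lemma \ref{var0} (and $\mathbf V,\mathbf R$ by $\mathbf V^{(\nu,j)},\mathbf R^{(\nu,j)}$). Since $\Phi^{(\nu,j)}$ does not depend on the $j$-th row of $\mathbf X^{(\nu)}$, which is independent of everything else, $\mathbf E[\Phi^{(\nu,j)}\mid\mathcal F_{\nu,j-1}]=\mathbf E[\Phi^{(\nu,j)}\mid\mathcal F_{\nu,j}]$, hence $\E|\gamma_{\nu,j}|^2\le 4\,\E|\Phi-\Phi^{(\nu,j)}|^2$, and it suffices to prove $\E|\Phi-\Phi^{(\nu,j)}|^2\le C\,(mn^2v^4)^{-1}$ uniformly in $\nu,j$.

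Next I would expand $\Phi-\Phi^{(\nu,j)}$ by telescoping in the three factors $\mathbf V_{a+1,m}$, $\mathbf R$, $\mathbf V_{1,m-a}$, treating the middle one via the resolvent identity $\mathbf R-\mathbf R^{(\nu,j)}=-\mathbf R(\mathbf V-\mathbf V^{(\nu,j)})\mathbf J\mathbf R^{(\nu,j)}$. As in the proof of Lemma \ref{var0} one may assume $\nu\le m-\nu+1$; then each difference $\mathbf V_{\alpha,\beta}-\mathbf V^{(\nu,j)}_{\alpha,\beta}$ is a sum of at most two rank-one matrices of the form $\big(\mathbf V_{\cdot,\cdot}\,\mathbf e_l\big)\big(\mathbf e_l^T\mathbf V_{\cdot,\cdot}\big)$ with $l\in\{j,j+n\}$, since $\mathbf H^{(\nu)}-\mathbf H^{(\nu,j)}=\mathbf e_j\mathbf e_j^T\mathbf H^{(\nu)}$ and $\mathbf H^{(m-\nu+1)}-\widetilde{\mathbf H}^{(m-\nu+1,j)}=\mathbf H^{(m-\nu+1)}\mathbf e_{j+n}\mathbf e_{j+n}^T$ (with the obvious modification in the exceptional case $m$ odd, $\nu=m-\nu+1$). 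After this expansion $\Phi-\Phi^{(\nu,j)}$ is a sum of $O_m(1)$ scalars, each of the form $\frac1n\,\mathbf e_l^T\mathbf V_{\cdot,\cdot}\,\mathbf B\,\mathbf V_{\cdot,\cdot}\,\mathbf e_{l'}$, where $\mathbf B$ is a product of at most two resolvents among $\mathbf R,\mathbf R^{(\nu,j)}$, of copies of $\mathbf J$ and $\mathbf P$, and of at most one further product matrix of the type $\mathbf V_{\cdot,\cdot}$ or $\mathbf V^{(\nu,j)}_{\cdot,\cdot}$.

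For each such scalar I would use $\|\mathbf R\|,\|\mathbf R^{(\nu,j)}\|\le v^{-1}$ and $\|\mathbf J\|=\|\mathbf P\|=1$ to bound its modulus by $\frac1{nv^2}\,\|\mathbf e_l^T\mathbf V_{\cdot,\cdot}\|\,\|\mathbf V_{\cdot,\cdot}\,\mathbf e_{l'}\|$ times the operator norm of the at most one remaining product matrix, then take $L^2$-norms and apply H\"older's inequality, splitting the at most four factors into eighth powers. Here $\E\|\mathbf e_l^T\mathbf V_{\cdot,\cdot}\|^8=O(1)$ and $\E\|\mathbf V_{\cdot,\cdot}\,\mathbf e_{l'}\|^8=O(1)$ by Lemma \ref{norm4} (whose combinatorial estimate is unaffected by truncating rows or columns, which only deletes summands), while $\E\|\mathbf V_{\cdot,\cdot}\|^8=O(1)$ by the standard spectral-norm bound for matrices with entries bounded by $c\tau_n\sqrt n$ and unit second moments — the same input as behind Lemma \ref{largeval} — which is again not increased under row/column truncation. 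H\"older then yields $\E|\Phi-\Phi^{(\nu,j)}|^2\le C n^{-2}v^{-4}$ uniformly in $\nu,j$, and summing over the $mn$ pairs $(\nu,j)$ gives $\E|\Phi-\E\Phi|^2\le C(nv^4)^{-1}$; the second assertion of the lemma follows verbatim with $\mathbf P$ replaced by $\sum_{k=1}^n\mathbf e_k\mathbf e_k^T$.

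The martingale reduction is routine, essentially a copy of Lemma \ref{var0}. The main obstacle is the bookkeeping in the last step: since the products $\mathbf V_{\cdot,\cdot}$ are not $O(1)$ in operator norm on the whole probability space, one must arrange the telescoping expansion so that in \emph{every} resulting bilinear form at most one long product matrix survives as a free operator-norm factor (the others being contracted against some $\mathbf e_l$, where Lemma \ref{norm4} applies), and then check that the exponents in H\"older's inequality recombine, after the $mn$-fold summation, to precisely the power $n^{-1}$ rather than something larger.
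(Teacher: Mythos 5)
Your martingale reduction is exactly the paper's (same $\sigma$-algebras, same row/column-truncated matrices $\mathbf H^{(\nu,j)}$, $\widetilde{\mathbf H}^{(m-\nu+1,j)}$, same telescoping of $\Phi-\Phi^{(\nu,j)}$ into three pieces via the resolvent identity). But you diverge at the key estimate, and there the proposal has a genuine gap. You aim for a bound on each martingale difference that is \emph{uniform in $j$}, namely $\E|\Phi-\Phi^{(\nu,j)}|^2\le Cn^{-2}v^{-4}$, and to get it you must control, for each individual bilinear form $\mathbf e_l^T\mathbf V_{\cdot,\cdot}\mathbf B\,\mathbf V_{\cdot,\cdot}\mathbf e_{l'}$, the \emph{operator norm} of the one long product matrix that is not contracted against a basis vector. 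You assert $\E\|\mathbf V_{\cdot,\cdot}\|^8=O(1)$ ``by the standard spectral-norm bound \dots the same input as behind Lemma \ref{largeval}.'' That attribution is wrong: Lemma \ref{largeval} only uses $\E\Tr(\mathbf W\mathbf W^*)\le n$ and Chebyshev to get $\Pr\{s_1\ge n\}\le C/n$, which is compatible with $\E\|\mathbf V_{\cdot,\cdot}\|^8$ being polynomially large in $n$. A bound $\E\|\mathbf X^{(\nu)}\|^8=O(1)$ under second moments plus truncation at $c\tau_n\sqrt n$ is a Bai--Yin/F\"uredi--Koml\'os type statement requiring a separate high-moment trace computation that appears nowhere in the paper; nothing in the paper's lemma inventory delivers it, and your closing paragraph essentially concedes that this is the unresolved obstacle rather than resolving it.

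The paper avoids this entirely by \emph{summing over $j$ before estimating}: it writes $\sum_{j}\E|\Xi_j^{(1)}|^2\le\E\|\mathbf A\|_2^2$ using the elementary inequality $\sum_j|a_{jj}|^2\le\|\mathbf A\|_2^2$, bounds resolvents in operator norm by $v^{-1}$, and then invokes only the Frobenius-norm bound $\E\|\mathbf V_{\alpha,\beta}\|_2^2\le Cn$ of Lemma \ref{norm2} (a pure second-moment computation). This yields $\sum_j\E|\Xi_j|^2\le Cn$, i.e.\ the required $C/(nv^4)$ after normalization, without ever needing a uniform-in-$j$ bound or an operator-norm moment bound on the products. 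To repair your argument, either import and prove the spectral-norm moment bound for the truncated matrices (a nontrivial addition), or restructure the final step as the paper does: keep the sum over $j$, recognize the $j$-dependence of each term as a diagonal entry of a fixed random matrix, and pass to its Frobenius norm so that Lemma \ref{norm2} suffices.
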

%===================================================================================================================================
\begin{proof}We prove the first inequality only. The proof of the other one is  similar.
 For $\nu=1,\ldots,m$ and for
$j=1,\ldots,n$, we introduce the  matrices,
 $\mathbf X^{(\nu,j)}=\mathbf X^{(\nu)}-\mathbf
e_j\mathbf e_j^T\mathbf X^{(\nu)}$,
 and $\mathbf H^{(\nu,j)}=\mathbf
H^{(\nu)}-\mathbf e_j\mathbf e_j^T\mathbf H^{(\nu)}$ and \newline
${\widetilde{\mathbf H}}^{(m-\nu+1,j)}=\mathbf H^{(m-\nu+1,j)}-\mathbf
H^{(m-\nu+1)}\mathbf e_{j+n} \mathbf e_{j+n}^T$.
 Note that the matrix $\mathbf X^{(\nu,j)}$ is
obtained from the matrix $\mathbf X^{(\nu)}$ by replacing its
$j$-th row by a row of zeros. Similar to the proof  of the 
previous Lemma we introduce
the matrices $\mathbf V^{(\nu,j)}_{c,d}$ by replacing in the definition of  
$\mathbf V_{c,d}$ the  matrix $\mathbf H^{(\nu)}$ by $\mathbf
H^{(\nu,j)}$ and the matrix $\mathbf H^{(m-\nu+1)}$ by ${\widetilde{\mathbf
H}}^{(m-\nu+1,j)}$. For instance, if $c\le\nu\le m-\nu+1\le d$ we get
\begin{equation}\notag
\mathbf V^{(\nu,j)}_{c,d}=\prod_{q=a}^{\nu-1}\mathbf H^{(q)}\,\mathbf
H^{(\nu,j)}\prod_{q=\nu+1}^{m-\nu}\mathbf H^{(q)}{\widetilde{\mathbf
H}}^{(m-\nu+1,j)}\prod_{q=m-\nu+1}^{b}\mathbf H^{(q)}
\end{equation}.

 Let $\mathbf
V^{(\nu,j)}:= \mathbf V_{1,m}^{(\nu,j)}$ and
$\mathbf R^{(j)}:= (\mathbf V^{(\nu,j)}(z)-\alpha\mathbf I)^{-1}$. Introduce  the
following quantities, for $\nu=1\ldots,m$ and $j=1,\ldots,n$,
\begin{equation}\notag
\Xi_j:=\sum_{k=1}^n[\mathbf V_{a+1,m}\mathbf J\mathbf R\mathbf
V_{1,m-a+1}]_{kk+n}- \sum_{k=1}^n[\mathbf V^{(\nu,j)}_{a+1,m}\mathbf J\mathbf
R^{(\nu,j)}\mathbf V^{(\nu,j)}_{1,m-a+1}]_{kk+n}
\end{equation}
We represent them  in the following form
\begin{equation}\notag
\Xi_j:= \Xi_j^{(1)}+\Xi_j^{(2)}+\Xi_j^{(3)},
\end{equation}
where
\begin{align}
\Xi_{\nu,j}^{(1)}&= =\sum_{k=1}^{n}[(\mathbf V_{a+1,m}
-\mathbf V_{a+1,m}^{(\nu,j)})\mathbf J\mathbf R\mathbf  V_{1,m-a+1}]_{k,k+n},\notag\\
\Xi_{\nu,j}^{(2)}&= \sum_{k=1}^{n}[\mathbf V_{a+1,m}^{(\nu,j)}\mathbf J(\mathbf R-\mathbf R^{(\nu,j)})
\mathbf J\mathbf  V_{1,m-a+1}]_{kk+n},\notag\\
\Xi_{\nu,j}^{(3)}&= \sum_{k=1}^{n}[\mathbf
V^{(j)}_{a+1,m}\mathbf J\mathbf R^{(\nu,j)}(\mathbf  V_{1,m-a+1}-\mathbf
V_{1,m-a+1}^{(\nu,j)})]_{kk+n}.\notag
\end{align}
Note that
\begin{align}
 \mathbf V_{a+1,m}-\mathbf V^{(\nu,j)}_{a+1,m}&=\mathbf V_{a+1,\nu-1}(\mathbf H^{(\nu)}-\mathbf
 H^{(\nu,j)})\mathbf V_{\nu+1,m}\notag\\&+
 \mathbf V_{a+1,\nu-1}\mathbf H^{(\nu,j)}\mathbf
 V_{\nu+1,m-\nu}(\widetilde {\mathbf H}_{m-\nu+1}-{\widetilde {\mathbf
 H}}_{m-\nu+1}^{\nu,j})\mathbf V_{m-\nu+2,m}.\notag
\end{align}
By definition of the matrices $\mathbf H^{\nu,j}$ and ${\widetilde{\mathbf
H}}^{m-\nu+1,j}$, we have
\begin{align}
\sum_{k=1}^{n}[(\mathbf V_{a+1,m} -\mathbf
V_{a+1,m}^{(\nu,j)})\mathbf J\mathbf R\mathbf
V_{1,m-\nu+1}]_{k,k+n}=[\mathbf V_{\nu+1,m}\mathbf J\mathbf R\mathbf V_{1,m-a+1}\mathbf{\widetilde J}
\mathbf V_{a+1,\nu}]_{j,j}&\notag\\
+[\mathbf V_{m-\nu+2,m}\mathbf J\mathbf R\mathbf V_{1,m-a+1}\mathbf{\widetilde
J}\mathbf V_{a+1,m-a+1}]_{j+n,j+n},&\notag
\end{align}
where
$$
\mathbf{\widetilde J}=\left(\begin{matrix}{\mathbf O\quad\mathbf
I}\\{\mathbf O\quad\mathbf O}\end{matrix}\right)
$$

This equality implies that
\begin{align}
 |\Xi_j^{(1)}|&\le |[\mathbf V_{\nu+1,m}\mathbf J\mathbf R\mathbf V_{1,m-a+1}\mathbf{\widetilde J}
\mathbf V_{a+1,\nu}]_{j,j+n}|\notag\\&\qquad\qquad+|[\mathbf V_{m-\nu+2,m}\mathbf J\mathbf R
\mathbf V_{1,m-a+1}\mathbf{\widetilde
J}\mathbf V_{a+1,m-\nu+1}]_{j+n,j+n}|.\notag
\end{align}

Using the obvious  inequality $\sum_{j=1}^n a_{jj}^2\le \|\mathbf A\|_2^2$ for any matrix 
$\mathbf A=(a_{jk})$,
$j,k=1,\ldots,n$, we get
\begin{align}
T_1:=\sum_{j=1}^n\E|\Xi_j^{(1)}|^2\le &\E\|\mathbf V_{\nu+1,m}\mathbf J\mathbf R\mathbf V_{1,m-a+1}
\mathbf{\widetilde J}\mathbf V_{a+1,\nu}\|_2^2\notag\\&+\E\|\mathbf V_{m-\nu+2,m}\mathbf J\mathbf R
\mathbf V_{1,m-a+1}\mathbf{\widetilde
J}\mathbf V_{a+1,m-\nu+1}\|_2^2.\notag
\end{align}

By Lemma \ref{norm2}, we get
\begin{equation}\label{T1}
 T_1\le \frac{C}{v^2}\E\|\mathbf V_{a+1,m}\mathbf V_{1,m-a+1}\|_2^2\le \frac {Cn}{v^2}
\end{equation}

Consider now the term
\begin{equation}\notag
 T_2=\sum_{j=1}^n\E|\Xi_j^{(2)}|^2.
\end{equation}
Using that $\mathbf R-\mathbf R^{(j)}=-\mathbf R^{(j)}(\mathbf V(z)-\mathbf
V^{(\nu,j)}(z))\mathbf R$, we get
\begin{align}
 |\Xi_{j}^{(2)}|&\le |\sum_{k=1}^{n}[\mathbf V^{(\nu,j)}_{a,m}\mathbf J\mathbf R\mathbf V_{1,\nu-1}
\mathbf e_j\mathbf e_j^T
 \mathbf V_{\nu,m}
\mathbf R\mathbf V_{1,b}]_{k,k+n}|\notag\\&\qquad\le [\mathbf
J\mathbf H^{(\alpha+1)}\mathbf V_{\alpha+2,m-\alpha}\mathbf
H^{(m-\alpha+1,j)} \mathbf V_{m-\alpha+2,m}\mathbf R\mathbf
V_{1,m-\alpha}\mathbf V^{(j)}_{\alpha+1,m}\mathbf J\mathbf R\mathbf
V_{1,\alpha}]_{jj} .\notag
\end{align}
This implies that
\begin{equation}\notag
 T^{(2)}\le C\E\|[\mathbf V_{\nu+1,m}\mathbf J\mathbf R\mathbf V_{1,b}\mathbf V_{a,m}\mathbf J\mathbf R\mathbf
\mathbf V_{1,\nu}\|_2^2.
\end{equation}
It is straightforward to check that
\begin{equation}\label{t2}
 T^{(2)}\le \frac C{v^4}\E\|\mathbf V_{1,\alpha}\mathbf J\mathbf H^{(\alpha+1)}
\mathbf V_{\alpha+2,m-\alpha}\mathbf H^{(m-\alpha+1,j)}
\mathbf V_{m-\alpha+2,m}\|_2^2=\E\|\mathbf Q\|_2^2
\end{equation}
The matrix on the right hand side of equation (\ref{t2}) may be  represented
 in the following form
\begin{equation}\notag
 Q=\prod_{\nu=1}^m{\mathbf H^{(\nu)}}^{\varkappa_{\nu}},
\end{equation}
where $\varkappa_{\nu}=0$ or $\varkappa_{\nu}=1$ or $\varkappa_{\nu}=2$.
Since $X^{(\nu)}_{ss}=0$, for $\varkappa=1$ or $\varkappa=2$, we have
\begin{equation}\notag
 \E|{\mathbf H^{(\nu)}}^{\varkappa}_{kl}|^2\le \frac C{n}.
\end{equation}
This implies that
\begin{equation}\label{T2}
 T_2\le Cn.
\end{equation}
Similar we prove that
\begin{equation}\label{T3}
 T_3:=\sum_{j=1}^n\E|\Xi_j^{(3)}|^2\le Cn.
\end{equation}
Inequalities (\ref{T1}), (\ref{T2}) and (\ref{T3}) together imply
\begin{equation}\notag
 \sum_{j=1}^n\E|\Xi_j|^2\le Cn
\end{equation}
Applying now a martingale expansion with respect to the $\sigma$-algebras
$\mathcal F_j$ generated by the random variables
$X_{kl}^{(\alpha+1)}$ with $1\le k\le j$, $1\le l\le n$ and all other random variables $X^{(q)}_{sl}$
except $q=\alpha+1$, we get
\begin{equation}\notag
\E|\frac1n(\sum_{k =1}^n[\mathbf V_{\alpha+1,m}\mathbf J\mathbf R
\mathbf  V_{1,m-\alpha}]_{kk+n}-\E\sum_{j=1}^n[\mathbf V_{\alpha+1,m}\mathbf J\mathbf R
\mathbf V_{1,m-\alpha}]_{kk+n})|^2\le \frac C{n v^4}.
\end{equation}

Thus the Lemma is proved.

\end{proof}
%%%%%%%%n%%%%%%%%%%%%%%%%%%%%%%%%%%%%%%%%%%%%%%%%%%%%%%%%%%%%%%%%%%%%%%%%%%%%%%%%%%%%%%%%%%%%%%%%%%%%%%%%%%%%%%%%%

%%%%%%%%%%%%%%%%%%%%%%%%%%%%%%%%%%%%%%%%%%%%%%%%%%%%%%%%%%%%%%%%%%%%%%%%%%%%%%%%%%%%%%%%%%%%%%%%%%%%%%%%%%%%%%%%%%%%%%%%%%%%%%%%%%%%%%%%%%%%%
\begin{lem}\label{derivatives}
Under the conditions of Theorem \ref{main} we have, for $\alpha=1,\ldots,m,$
there exists a constant  $C$  such that
\begin{equation}\notag
\frac1{n^{\frac32}}\E\left|\sum_{j=1}^{n}\sum_{k=1}^{n}(-X^{(\alpha)}_{jk}+(1-\theta_{jk}){X^{(\alpha)}_{jk}}^3)
\left[\frac{\partial ^{2}(\mathbf V_{\alpha+1,m}\mathbf J\mathbf R\mathbf V_{1,m-\alpha+1})}
{\partial {X_{jk}^{(\alpha)}}^{2}}(\theta_{jk}^{(\alpha)}X_{jk}^{(\alpha)})\right]_{kj}\right|
\le C\tau_nv^{-4},
\end{equation}
and
\begin{align}\notag
\frac1{n^{\frac32}}&\E\left|\sum_{j=1}^{p_{m-\alpha}}\sum_{k=1}^{p_{m-\alpha+1}}
(X^{(m-\alpha+1)}_{jk}+{X^{(m-\alpha+1)}_{jk}}^3)\right.\notag\\ &\qquad\qquad\left.\times\left[\frac{\partial ^{2}
(\mathbf V_{\alpha+1,m}\mathbf J\mathbf R\mathbf V_{1,m-\alpha+1})}{\partial {X_{jk}^{(m-\alpha+1)}}^{2}}
(\theta_{jk}^{(m-\alpha+1)}X_{jk}^{(m-\alpha+1)})\right]_{j+n,k}\right|
\le C\tau_nv^{-4},
\end{align}
where $\theta_{jk}^{(\alpha)}$ and $X_{jk}^{(\alpha)}$ are independent in aggregate for
$\alpha=1,\ldots,m$ and $j=1,\ldots,n$, $k=1,\ldots,n$, and
$\theta_{jk}^{(\alpha)}$ are r.v. which are  uniformly distributed on the unit interval.\newline
By $\frac{\partial^2}{\partial {X_{jk}^{(\alpha)}}^2}\mathbf A(\theta_{jk}^{(\alpha)}X_{jk}^{(\alpha)})$ 
we denote the matrix obtained from
$\frac{\partial^2}{\partial {X_{jk}^{(\alpha)}}^2}\mathbf A$ by replacing its entries
$X_{jk}^{(\alpha)}$ by $\theta_{jk}^{(\alpha)}X_{jk}^{(\alpha)}$.
\end{lem}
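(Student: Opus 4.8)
The plan is to differentiate once more the matrix product that already appeared in Section \ref{convergence}, and then to estimate the resulting Taylor remainder by a Cauchy--Schwarz splitting of the sum over $(j,k)$ into an ``entry part'', carrying the random variables, and a ``matrix part'', carrying norms of products of the blocks $\mathbf V_{a,b}$ and of the resolvent $\mathbf R$.

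First I would compute $\frac{\partial^2}{\partial (X^{(\alpha)}_{jk})^2}\bigl(\mathbf V_{\alpha+1,m}\mathbf J\mathbf R\mathbf V_{1,m-\alpha+1}\bigr)$ by the product rule, using $\frac{\partial \mathbf R}{\partial X^{(\alpha)}_{jk}}=-\mathbf R\,\frac{\partial(\mathbf V\mathbf J)}{\partial X^{(\alpha)}_{jk}}\,\mathbf R$ and the fact that differentiating any $\mathbf V$-block in $X^{(\alpha)}_{jk}$ inserts a rank-one dyad $\frac1{\sqrt n}\mathbf e_s\mathbf e_t^T$ with $s,t\in\{j,k,j+n,k+n\}$, exactly as in (\ref{a1}) and (\ref{super12}). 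The result is a sum of finitely many terms (a number depending only on $m$); each term $\mathbf T=\mathbf T(j,k)$ is a product of $\mathbf V_{a,b}$-blocks, copies of $\mathbf J$, at most three resolvent factors $\mathbf R$, exactly two dyadic insertions, and a scalar prefactor $n^{-1}$ (one $n^{-1/2}$ per differentiation). Since the two dyads pin free indices together, the $(k,j)$-entry $[\mathbf T]_{kj}$ collapses to $n^{-1}$ times a product of a bounded number of scalar matrix entries of products of $\mathbf V$-blocks and of $\mathbf R$, all index labels lying in $\{j,k,j+n,k+n\}$ --- the same collapse that turned the first-derivative terms in (\ref{super}) into near-diagonal sums. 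The sub-case $\alpha\le m-\alpha$ versus $\alpha>m-\alpha$ only changes which blocks appear and is treated identically, and the second displayed inequality of the lemma is the mirror statement (differentiate in $X^{(m-\alpha+1)}_{jk}$, read off the $(j+n,k)$-entry), proved the same way.

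Next, for each term I would apply Cauchy--Schwarz, first over $(j,k)$ and then to the expectation of the product:
$$
\frac1{n^{3/2}}\,\E\abs{\,\sum_{j,k}\bigl(X^{(\alpha)}_{jk}+(X^{(\alpha)}_{jk})^3\bigr)[\mathbf T]_{kj}\,}
\le \frac1{n^{3/2}}\,\Bigl(\E\sum_{j,k}\abs{X^{(\alpha)}_{jk}+(X^{(\alpha)}_{jk})^3}^2\Bigr)^{\frac12}
\Bigl(\E\sum_{j,k}\abs{[\mathbf T]_{kj}}^2\Bigr)^{\frac12}.
$$
For the first factor, (\ref{as1}) gives $\abs{X^{(\alpha)}_{jk}+(X^{(\alpha)}_{jk})^3}^2\le \abs{X^{(\alpha)}_{jk}}^2\bigl(1+\abs{X^{(\alpha)}_{jk}}^2\bigr)^2\le C(c\tau_n\sqrt n)^4\abs{X^{(\alpha)}_{jk}}^2$, hence $\E\sum_{j,k}\abs{\cdots}^2\le C\tau_n^4 n^4+Cn^2\le C\tau_n^2 n^4$ for $n$ large, so this factor is $\le C\tau_n n^2$; this is the only place the cube of the entry enters, and it is what produces the factor $\tau_n$. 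For the second factor, I would pull each of the at most three resolvents out as $\norm{\mathbf R}\le v^{-1}$, bound the $\mathbf J$'s by $1$, and estimate the remaining products of $\mathbf V$-blocks with pinned indices by iterated H\"older together with the row and column moment bounds $\E\norm{\mathbf e_j^T\mathbf V_{a,b}}_2^{2r}\le C_r$, $\E\norm{\mathbf V_{a,b}\mathbf e_k}_2^{2r}\le C_r$ of Lemma \ref{norm4} (and, where a whole block survives a trace-like sum, the Frobenius bound $\E\norm{\mathbf V_{a,b}}_2^2\le Cn$ of Lemma \ref{norm2}). The point is that the two dyads leave effectively a single free summation index, so $\E\sum_{j,k}\abs{[\mathbf T]_{kj}}^2$ is $O(n^{-2}\cdot n\cdot v^{-8})=O(n^{-1}v^{-8})$ --- the $n^{-2}$ is the square of the prefactor, the $n$ the single free sum, the $v^{-8}$ absorbs the resolvents with room to spare --- so this factor is $\le Cn^{-1/2}v^{-4}$.

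Multiplying, $\frac1{n^{3/2}}\cdot C\tau_n n^2\cdot Cn^{-1/2}v^{-4}=C\tau_n v^{-4}$, and summing over the $O_m(1)$ terms only changes $C$. The main obstacle is entirely organisational: enumerating the terms of the second derivative and, for each, routing the nested Cauchy--Schwarz and H\"older inequalities so that every resolvent is charged to one factor $v^{-1}$ and every $\mathbf V$-block is matched to a row-, column-, or Frobenius-norm moment bound, with no index summed beyond what the dyadic pinning allows; the precise exponent of $v$ is irrelevant, since this expression enters the argument only as one of the generic error terms $\varepsilon_n(\alpha,z)$.
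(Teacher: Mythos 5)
Your structural setup (computing the second derivative via the product rule, noting that each resulting term carries a prefactor $n^{-1}$ and two dyadic insertions so that its $(k,j)$-entry collapses to $n^{-1}$ times a product of finitely many scalar entries, and treating the two displayed inequalities symmetrically) matches the paper. The gap is in the final estimation: your global Cauchy--Schwarz over $(j,k)$ is too lossy, and the key claim that $\E\sum_{j,k}|[\mathbf T]_{kj}|^2 = O(n^{-1}v^{-8})$ is unjustified and, with the tools available, false. The double sum has $n^2$ terms; each $|[\mathbf T]_{kj}|^2$ is $n^{-2}$ times a product of squared entries of matrices of the form $\mathbf V_{a,b}\mathbf J\mathbf R\mathbf V_{c,d}$, and such an entry is only $O(1)$ --- via $|[\mathbf U\mathbf J\mathbf R\mathbf V]_{st}|\le v^{-1}\|\mathbf e_s^T\mathbf U\|_2\|\mathbf V\mathbf e_t\|_2$ and Lemma \ref{norm4} --- not $O(n^{-1/2})$ (near-diagonal entries of resolvent products are genuinely of order one, and no delocalization estimate is proved in the paper). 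The dyadic pinning makes each \emph{entry} a product of finitely many scalars, but it does not reduce the number of free pairs $(j,k)$ in the outer sum. Hence $\E\sum_{j,k}|[\mathbf T]_{kj}|^2 = O(v^{-c})$ with no decay in $n$, your second factor is $O(v^{-c/2})$ rather than $O(n^{-1/2}v^{-4})$, and the product $n^{-3/2}\cdot C\tau_n n^2\cdot O(1)$ is of order $\tau_n\sqrt n$, which diverges.

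The balance that produces exactly $C\tau_n$ requires keeping $\E|X^{(\alpha)}_{jk}|^3\le c\tau_n\sqrt n$ as a \emph{first} moment of $|X|^3$ rather than converting it (as your Cauchy--Schwarz implicitly does) into $(\E|X|^6)^{1/2}\le c^2\tau_n^2 n$. The paper therefore uses the triangle inequality over $(j,k)$ and, for each term, decouples $X^{(\alpha)}_{jk}$ from the matrix entry by splitting off its occurrence in the relevant blocks (writing $\mathbf U_\alpha=\mathbf U_\alpha^{(jk)}+n^{-1/2}X^{(\alpha)}_{jk}(\cdots)$, etc.), so that independence yields $\E|X_{jk}|^3\cdot\E(\text{row/column norms})\le c\tau_n\sqrt n\cdot C v^{-2}$ per term; summing the $n^2$ terms against the prefactors $n^{-3/2}\cdot n^{-1}$ then gives $C\tau_n v^{-2}$ (the paper also observes that three of the seven second-derivative pieces have exactly vanishing expectation, though the direct bound would handle them too). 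To repair your argument you would need either this per-term decoupling or a genuine $O(n^{-1/2})$ bound on typical entries of $\mathbf V_{a,b}\mathbf J\mathbf R\mathbf V_{c,d}$, neither of which your Cauchy--Schwarz route supplies.
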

%%%%%%%%%%%%%%%%%%%%%%%%%%%%%%%%%%%%%%%%%%%%%%%%%%%%%%%%%%%%%%%%%%%%%%%%%%%%%%%%%%%%%%%%%%%%%%%%%%%%%%%%%%%%%%%%%%%%%%%%%%%%%%%%%%%%%%%%%%%%%%%%%%%%%%%%
\begin{proof}The proof of this lemma is rather technical. But we  shall include it
 for completeness.
By the formula for the derivatives of a resolvent matrix, we have
\begin{equation}\label{deriv}
\frac{\partial (\mathbf V_{\alpha+1,m}\mathbf J\mathbf R\mathbf V_{1,m-\alpha+1})}{\partial X_{jk}^{(\alpha)}}=
\sum_{l=1}^5Q_l,
\end{equation}
\begin{align}
\mathbf Q_1=&\frac1{\sqrt n}
\mathbf V_{\alpha+1,m}\mathbf J\mathbf R\mathbf V_{1,\alpha-1}\mathbf e_j\mathbf e_k^T\mathbf V_{\alpha+1,m-\alpha+1}
I_{\{\alpha\le m-\alpha+1\}})\notag\\
%%%%%%%%%%%%%%%%%%%%%%%%%%%%%%%%%%%%%%%%%%%%%%%%%%%%%%%%%%%%%%%%%%%%%%%%%%%%%%%%%%%%%%%%%%%%%%%%%
\mathbf Q_2=&\frac1{\sqrt n}\mathbf V_{\alpha+1,m}\mathbf J\mathbf R\mathbf V_{1,m-\alpha}
\mathbf e_{k+n}\mathbf e_{j+n}\notag\\
%%%%%%%%%%%%%%%%%%%%%%%%%%%%%%%%%%%%%%%%%%%%%%%%%%%%%%%%%%%%%%%%%%%%%%%%%%%%%%%%%%%%%%%%%%%%%%
\mathbf Q_3=&-\frac1{\sqrt n}\mathbf V_{\alpha+1,m}\mathbf J\mathbf R\mathbf V_{1,\alpha-1}\mathbf e_j
\mathbf e_k^T\mathbf V_{\alpha+1,m}\mathbf J\mathbf R\mathbf V_{1,m-\alpha+1}\notag\\
%%%%%%%%%%%%%%%%%%%%%%%%%%%%%%%%%%%%%%%%%%%%%%%%%%%%%%%%%%%%%%%%%%%%%%%%%%%%%%%%%%%%%%%%
\mathbf Q_4=&-\frac1{\sqrt n}\mathbf V_{\alpha+1,m}\mathbf J\mathbf R\mathbf V_{1,m-\alpha}
\mathbf e_{k+p_{m-\alpha}}\mathbf e_{j+p_{m-\alpha+1}}^T\mathbf V_{m-\alpha+2,m}\mathbf J\mathbf R
\mathbf V_{1,m-\alpha+1}\notag\\
%%%%%%%%%%%%%%%%%%%%%%%%%%%%%%%%%%%%%%%%%%%%%%%%%%%%%%%%%%%%%%%%%%%%%%%%%%%%%%%%%%%%%%%%%%%%%%%%%
\mathbf Q_5=&\frac1{\sqrt n}\mathbf V_{\alpha+1,m-\alpha}\mathbf e_{k+p_{m-\alpha}}\mathbf e_{j+n}^T
\mathbf V_{m-\alpha+2,m}\mathbf J\mathbf R\mathbf V_{1,m-\alpha+1}I_{\{\alpha\le m-\alpha+1\}}).\notag
\end{align}
Introduce the notations
\begin{equation}\notag
 \mathbf U_{\alpha}:=\mathbf V_{\alpha+1,m},\quad\mathbf V_{\alpha}=\mathbf V_{1,m-\alpha+1}.
\end{equation}
From  formula (\ref{deriv}) it follows that
\begin{equation}\notag
 \frac{\partial^2 (\mathbf U_{\alpha}\mathbf J\mathbf R\mathbf V_{\alpha})}
{\partial {X_{jk}^{(\nu)}}^2}=\sum_{l=1}^{5}\frac{\partial \mathbf Q_l}{\partial X_{jk}^{(\alpha)}}.
\end{equation}
Since all the calculations will be similar we consider the case  $l=3$ only.
Simple calculations of derivatives show that  
\begin{equation}
  \frac{\partial \mathbf Q_3}{\partial X_{jk}^{(\alpha)}}=\sum_{m=1}^{7} \mathbf P^{(m)},
\end{equation}
 where
%%%%%%%%%%%%%%%%%%%%%%%%%%%%%%%%%%%%%%%%%%%%%%%%%%%%%%%%%%%%%%%%%%%%%%%%%%%%%%%%%%%%%%%%%%%%%%
\begin{align}
\mathbf P^{(1)}&=-\frac1n\mathbf V_{\alpha+1,m-\alpha}\mathbf e_{k+p_{m-\alpha}}\mathbf e_{j+n}^T
\mathbf U_{m-\alpha+1}\mathbf J\mathbf R\mathbf V_{m-\alpha+2}\mathbf e_{j}\mathbf e_{k}^T
\mathbf U_{\alpha}\mathbf J\mathbf R\mathbf V_{\alpha}\notag\\
%%%%%%%%%%%%%%%%%%%%%%%%%%%%%%%%%%%%%%%%%%%%%%%%%%%%%%%%%%%%%%%%%%%%%%%%%%%%%%%%%%%%%%%%%%%%%%%%%
\mathbf P^{(2)}&=-\frac1n\mathbf U_{\alpha}\mathbf J\mathbf R\mathbf V_{m-\alpha+2}\mathbf e_{j}
\mathbf e_{k}^T\mathbf U_{\alpha}\mathbf J\mathbf R\mathbf V_{\alpha+1}\mathbf e_{k+n}\mathbf e_{j+n}^T
\notag\\
%%%%%%%%%%%%%%%%%%%%%%%%%%%%%%%%%%%%%%%%%%%%%%%%%%%%%%%%%%%%%%%%%%%%%%%%%%%%%
\mathbf P^{(3)}&=-\frac1n\mathbf U_{\alpha}\mathbf J\mathbf R\mathbf V_{m-\alpha+2}
\mathbf e_{j}\mathbf e_{k}^T\mathbf V_{\alpha+1,m-\alpha}\mathbf e_{k+n}\mathbf e_{j+n}^T
\mathbf U_{m-\alpha+1}\mathbf J\mathbf R\mathbf V_{\alpha}\notag\\
%%%%%%%%%%%%%%%%%%%%%%%%%%%%%%%%%%%%%%%%%%%%%%%%%%%%%%%%%%%%%%%%%%%%%%%%%%%%%%%%%%%%%%%%%%%%%%%%%
\mathbf P^{(4)}&=-\frac1n\mathbf U_{\alpha}\mathbf J\mathbf R\mathbf V_{m-\alpha+2}
\mathbf e_j\mathbf e_k^T\mathbf U_{\alpha}\mathbf J\mathbf R\mathbf V_{m-\alpha+2}\mathbf e_j
\mathbf e_k^T\mathbf U_{\alpha}\mathbf J\mathbf R\mathbf V_{\alpha}\notag\\
%%%%%%%%%%%%%%%%%%%%%%%%%%%%%%%%%%%%%%%%%%%%%%%%%%%%%%%%%%%%%%%%%%%%%%%%%%%%%%%%%%%%%%%%%%%%%%
\mathbf P^{(5)}&=\frac1n\mathbf U_{\alpha}\mathbf J\mathbf R\mathbf V_{\alpha+1}\mathbf e_{k+n}
\mathbf e_{j+n}^T\mathbf U_{m-\alpha+1}\mathbf J\mathbf R\mathbf V_{m-\alpha+2}\mathbf e_j\mathbf e_k^T
\mathbf U_{\alpha}\mathbf J\mathbf R\mathbf V_{\alpha}
\notag\\
%%%%%%%%%%%%%%%%%%%%%%%%%%%%%%%%%%%%%%%%%%%%%%%%%%%%%%%%%%%%%%%%%%%%%%%%%%%%%%%
\mathbf P^{(6)}&=\frac1n\mathbf U_{\alpha}\mathbf J\mathbf R\mathbf V_{m-\alpha+2}\mathbf e_j\mathbf e_k^T
\mathbf U_{\alpha}\mathbf J\mathbf R\mathbf V_{\alpha+1}\mathbf e_{k+n}\mathbf e_{j+n}^T\mathbf U_{m-\alpha+1}
\mathbf J\mathbf R\mathbf V_{\alpha}
\notag\\
%%%%%%%%%%%%%%%%%%%%%%%%%%%%%%%%%%%%%%%%%%%%%%%%%%%%%%%%%%%%%%%%%%%%%%%%%%%%%%%%%%%%%%%%%%%%%%
\mathbf P^{(7)}&=\frac1n\mathbf U_{\alpha}\mathbf J\mathbf R\mathbf V_{m-\alpha+2}\mathbf e_j\mathbf e_k^T
\mathbf U_{\alpha}\mathbf J\mathbf R\mathbf V_{m-\alpha+2}\mathbf e_{j}\mathbf e_{k}^T\mathbf U_{\alpha}
\mathbf J\mathbf R\mathbf V_{\alpha}.\notag
\end{align}

Consider now the quantity, for $\mu=1,\ldots,5$,
\begin{equation}\label{fin1}
 L_{\mu}=\frac1{n^{\frac32}}\sum_{j=1}^{n}\sum_{k=1}^{n}\E {X_{j,k}^{(\alpha)}}^3
\left[\frac{\partial \mathbf Q_{\mu}}{\partial X_{jk}^{(\alpha)}}\right]_{kj}.
\end{equation}
We bound $L_3$ only. The others terms are bounded in a similar way.
First we note that
\begin{equation}\label{fin2}
 \sum_{j=1}^{n}\sum_{k=1}^{n}\E {X_{j,k}^{(\alpha)}}^3[\mathbf P^{(\nu)}]_{kj}=0,\quad\text{for}\quad \nu=1,2,3.
\end{equation}
Furthermore,
\begin{equation}
 \E |{X_{j,k}^{(\alpha)}}^3||[\mathbf P^{(4)}]_{kj}|\le\E|X_{jk}^{(\alpha)}|^3
|[\mathbf U_{\alpha}\mathbf J\mathbf R\mathbf V_{m-\alpha+2}]_{kj}|^2|[\mathbf U_{\alpha}\mathbf J\mathbf R
\mathbf V_{\alpha}]_{kj}|.
\end{equation}

Let $\mathbf U_{\alpha}^{(jk)}$ ( $\mathbf V^{(j,k)}_{\alpha}$) denote matrix obtained from $\mathbf U_{\alpha}$ 
 ($\mathbf V_{\alpha}$) by replacing $X_{jk}^{(\alpha)}$ by zero. We may write
\begin{align}
\mathbf U_{\alpha}= \mathbf U_{\alpha}^{(jk)}+\frac1{\sqrt n}X_{jk}^{(\alpha)}\mathbf V_{\alpha+1,m-\alpha+1}
\mathbf e_{k+n}\mathbf e_{j+n}^T\mathbf V_{m-\alpha+2,m}.
\end{align}
and
\begin{align}
 \mathbf V_{\alpha}=\mathbf V^{(j,k)}_{\alpha}+\frac1{\sqrt n}X_{jk}\mathbf V_{1,m-\alpha+1}
\mathbf e_{k+n}\mathbf e_{j+n}^T.\notag
\end{align}
 Using these representations and taking in account that
\begin{equation}
 [\mathbf V_{\alpha+1,m-\alpha}]_{k,k+n}=[\mathbf V_{1,m-\alpha}]_{k,k+n}=0,
\end{equation}
 we get by differentiation
\begin{align}\label{fin8}
 \E |{X_{j,k}^{(\alpha)}}^3||[\mathbf P^{(4)}]_{kj}|\le \frac1{n}\E |{X_{j,k}^{(\alpha)}}^3|\,
|[\mathbf U_{\alpha}\mathbf J\mathbf R\mathbf V_{m-\alpha+2}]_{kj}|^2 \,
|[\mathbf U^{(j,k)}_{\alpha}\mathbf J\mathbf R\mathbf V^{(j,k)}_{\alpha}]_{kj}|.
\end{align}
Furthermore,
\begin{align}\label{fin9}
|[\mathbf U_{\alpha}\mathbf J\mathbf R\mathbf V_{m-\alpha+2}]_{k,j}|&\le
\frac1v\|\mathbf V_{m-\alpha+2}\mathbf e_{j}\|_2\|\mathbf e_k^T\mathbf U_{\alpha}\|_2\notag\\
%%%%%%%%%%%%%%%%%%%%%%%%%%%%%%%%%%%%%%%%%%%%%%%%%%%%%%%%%%%%%%%%%%%%%%%%%%%
|[\mathbf U^{(j,k)}_{\alpha}\mathbf J\mathbf R\mathbf V^{(j,k)}_{\alpha}]_{kj}|&\le\frac1v
\|\mathbf V^{(j,k)}_{\alpha}\mathbf e_{k}\|_2\|\mathbf e_j^T\mathbf U^{(j,k)}_{\alpha}\|_2.\notag\\
\end{align}
Applying inequalities (\ref{fin8}) and (\ref{fin9}) and taking in account the 
 independence of entries, we get
\begin{align}
\E |{X_{j,k}^{(\alpha)}}^3||[\mathbf P^{(4)}]_{kj}|\le \frac1{nv^2}\E |{X_{j,k}^{(\alpha)}}^3 
\E\|\mathbf V_{m-\alpha+2}\mathbf e_{k}\|_2^2\|\mathbf e_j^T\mathbf U_{\alpha}\|_2^2
\|\mathbf V^{(j,k)}_{\alpha}\mathbf e_{k}\|_2\|\mathbf e_j^T\mathbf U^{(j,k)}_{\alpha}\|_2
\end{align}
Applying Lemma \ref{norm4}, we get
\begin{equation}
 \frac1{n^{\frac32}}\sum_{j=1}^{n}\sum_{k=1}^{n}\E |X_{jk}^{(\alpha)}|^3
|[\mathbf P^{(4)}]_{kj}|\le \frac{C}{n^{\frac52}}\sum_{j=1}^{n}\sum_{k=1}^{n}\E |X_{jk}^{(\alpha)}|^3
\end{equation}
The assumption (\ref{as1}) now yields
\begin{equation}
 \frac1{n^{\frac32}}\sum_{j=1}^{n}\sum_{k=1}^{n}\E |X_{jk}^{(\alpha)}|^3
|[\mathbf P^{(4)}]_{kj}|\le C\tau_n.
\end{equation}
Similar we get corresponding  bounds for  $\nu=5,6,7$
\begin{equation}
 \frac1{n^{\frac32}}\sum_{j=1}^{n}\sum_{k=1}^{n}\E |X_{jk}^{(\alpha)}|^3
|[\mathbf P^{(\nu)}]_{kj}|\le C\tau_n.
\end{equation}
and
\begin{equation}
 |L_{\mu}|\le C\tau_n,\quad \mu=1,\ldots,5.
\end{equation}

The bound of the quantity
\begin{equation}\label{fin1*}
 \widehat L_{\mu}=\sum_{j=1}^{n}\sum_{k=1}^{n}\E {X_{j,k}^{(\alpha)}}
\left[\frac{\partial \mathbf Q_{\nu}}{\partial X_{jk}^{(\alpha)}}\right]_{kj}.
\end{equation}
is similar.
Thus, the Lemma is proved.

\end{proof}
\begin{lem}\label{teilor}Under the  conditions of Theorem \ref{main} we have
\begin{equation}\notag
\sum_{j=1}^{n}\sum_{k=1}^{n}\E X_{jk}^{(\nu)}[\mathbf V_{\nu+1,m}\mathbf J\mathbf R
\mathbf V_{1,m-\nu+1}]_{kj}=\sum_{j=1}^{n}\sum_{k=1}^{n}
\E\left[\frac{\partial \mathbf V_{\nu+1,m}\mathbf J\mathbf R\mathbf V_{1,m-\nu+1}}
{\partial X_{jk}^{(\nu)}}\right]_{kj}+\varepsilon_n(z,\alpha))
\end{equation}
and
\begin{align}
\sum_{j=1}^{n}\sum_{k=1}^{n}&\E X^{(m-\nu+1)}_{j,k}[\mathbf V_{\nu+1,m}
\mathbf J\mathbf R\mathbf V_{1,m-\nu+1}]_{j+n,k}\notag\\&=\sum_{j=1}^{n}\sum_{k=1}^{n}
\E\left[\frac{\partial \mathbf V_{\nu+1,m}\mathbf J\mathbf R
\mathbf V_{1,m-\nu+1}}{\partial X_{jk}^{(\nu)}}\right]_{j+n,k}+\varepsilon_n(z,\alpha)),\notag
\end{align}
where $|\varepsilon_n(z,\alpha))|\le\frac{C\tau_n}{v^4}$.
\end{lem}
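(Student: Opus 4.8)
This identity is a second-order Taylor (``integration by parts'') expansion in the variable $X_{jk}^{(\nu)}$, and the whole error estimate has in effect been prepared in Lemma~\ref{derivatives}; the plan is to carry out the expansion and then match the remainder with the quantity bounded there. I will treat the first identity, the second being entirely analogous (one differentiates instead with respect to the entries of $\mathbf X^{(m-\nu+1)}$, which occur in $\mathbf H^{(m-\nu+1)}$ and, conjugated, in $\mathbf H^{(\nu)}$).

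Fix $\nu$ and a pair $(j,k)$ and regard $f_{jk}(t):=[\mathbf V_{\nu+1,m}\mathbf J\mathbf R\mathbf V_{1,m-\nu+1}]_{kj}$ as a function of the single scalar $t=X_{jk}^{(\nu)}$, all other entries frozen (its real and imaginary parts, equivalently $t$ and $\overline t$, playing the role of expansion variables, in accordance with the derivative formulas (\ref{a1}) and (\ref{deriv})). Because $t$ enters only through a bounded number of rank-one perturbations and $\|\mathbf R\|\le v^{-1}$, $f_{jk}$ is smooth with derivatives controlled by powers of $v^{-1}$ and norms of products of the matrices $\mathbf V_{a,b}$, and Taylor's formula gives, with random $\theta_{jk},\tilde\theta_{jk}\in[0,1]$,
\[
 f_{jk}(t)=f_{jk}(0)+t f_{jk}'(0)+\tfrac12 t^2 f_{jk}''(\theta_{jk}t),\qquad
 f_{jk}'(0)=f_{jk}'(t)-t f_{jk}''(\tilde\theta_{jk}t),
\]
where $f_{jk}'(t)=[\partial(\mathbf V_{\nu+1,m}\mathbf J\mathbf R\mathbf V_{1,m-\nu+1})/\partial X_{jk}^{(\nu)}]_{kj}$ and $f_{jk}''(t)=[\partial^2(\mathbf V_{\nu+1,m}\mathbf J\mathbf R\mathbf V_{1,m-\nu+1})/\partial(X_{jk}^{(\nu)})^2]_{kj}$.

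Multiplying the first relation by $X_{jk}^{(\nu)}$ and taking expectations, and using the independence of the entries together with $\E X_{jk}^{(\nu)}=0$ (which kills the $f_{jk}(0)$-term) and $\E (X_{jk}^{(\nu)})^2=1$ (which keeps the $f_{jk}'(0)$-term with coefficient $1$), then substituting the second relation for $f_{jk}'(0)$ and summing over $j,k$, one obtains
\[
 \sum_{j,k}\E X_{jk}^{(\nu)}[\mathbf V_{\nu+1,m}\mathbf J\mathbf R\mathbf V_{1,m-\nu+1}]_{kj}
 =\sum_{j,k}\E\Big[\tfrac{\partial(\mathbf V_{\nu+1,m}\mathbf J\mathbf R\mathbf V_{1,m-\nu+1})}{\partial X_{jk}^{(\nu)}}\Big]_{kj}+\varepsilon_n(z,\alpha),
\]
with
\[
 \varepsilon_n(z,\alpha)=\tfrac12\sum_{j,k}\E (X_{jk}^{(\nu)})^3 f_{jk}''(\theta_{jk}X_{jk}^{(\nu)})
  -\sum_{j,k}\E X_{jk}^{(\nu)} f_{jk}''(\tilde\theta_{jk}X_{jk}^{(\nu)}).
\]

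It remains to bound $\varepsilon_n(z,\alpha)$, and here the work has already been done: the two sums above are, up to the replacement of $X_{jk}^{(\nu)}$ by $\theta_{jk}X_{jk}^{(\nu)}$ inside the second derivative, exactly the $(X_{jk}^{(\nu)})^3$-weighted and the $X_{jk}^{(\nu)}$-weighted second-derivative sums estimated in Lemma~\ref{derivatives}, so that lemma gives $|\varepsilon_n(z,\alpha)|\le C\tau_n v^{-4}$. (The bounds in Lemma~\ref{derivatives} rest in turn on the truncation (\ref{as1}), which supplies the factor $\tau_n$, on $\|\mathbf R\|\le v^{-1}$, and---after expanding $f_{jk}''$ into its rank-one summands as in (\ref{deriv}) and isolating the $(j,k)$-dependence---on the moment estimates of Lemmas~\ref{norm2} and~\ref{norm4} for $\|\mathbf V_{a,b}\mathbf e_k\|_2$ and $\|\mathbf e_j^T\mathbf V_{a,b}\|_2$.) I expect the only genuinely laborious point to be this matching of the remainder sums with the quantities controlled in Lemma~\ref{derivatives}; the Taylor mechanism itself is immediate once one notes the independence of $f_{jk}(0)$ and $f_{jk}'(0)$ from $X_{jk}^{(\nu)}$.
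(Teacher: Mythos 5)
Your proposal is correct and follows essentially the same route as the paper: a second-order Taylor expansion in $X_{jk}^{(\nu)}$ combined with $\E X_{jk}^{(\nu)}=0$, $\E(X_{jk}^{(\nu)})^2=1$, yielding a remainder that is precisely the combination of $X^3$-weighted and $X$-weighted second-derivative sums controlled by Lemma~\ref{derivatives}. The identification of the remainder with the quantities of Lemma~\ref{derivatives} is exactly how the paper concludes, so nothing further is needed.
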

\begin{proof}
 By Taylor  expansion we have,
\begin{equation}\notag
 \E\xi f(\xi)=f'(0)\E\xi^2+\E(1-\theta)\xi^3f''(\theta\xi),
\end{equation}
and
\begin{equation}
  f'(0)=\E f'(\xi)-\E\xi f''(\theta\xi)
\end{equation}
where $\theta$ denotes a r.v.  which  uniformly  distributed on the unit
interval  and is independent on $\xi$.
After simple calculations we get
\begin{align}
 \sum_{j=1}^{n}\sum_{k=1}^{n}\E X_{jk}^{(\nu)}&[\mathbf V_{\nu+1,m}
\mathbf J\mathbf R\mathbf V_{1,m-\nu+1}]_{kj}=\sum_{j=1}^{n}\sum_{k=1}^{n}
\E\left[\frac{\partial \mathbf V_{\nu+1,m}\mathbf J\mathbf R\mathbf V_{1,m-\nu+1}}
{\partial X_{jk}^{(\nu)}}\right]_{kj}\notag\\
&+\sum_{j=1}^{n}\sum_{k=1}^{n}\E(-X_{jk}^{(\nu)}+(1-\theta_{jk}){X_{jk}^{(\nu)}}^3)
\left[\frac{\partial^2 \mathbf V_{\nu+1,m}\mathbf J\mathbf R\mathbf V_{1,m-\nu+1}}
{{\partial X_{jk}^{(\nu)}}^2}(\theta_{jk}^{(\nu)}
X_{jk}^{(\nu)})\right]_{kj}.\notag
\end{align}
Using the results of Lemma \ref{derivatives}, we conclude the proof.

\end{proof}

\end{document}